\numberwithin{equation}{section}
\newtheorem{theorem}{Theorem}[section]
\newtheorem{proposition}[theorem]{Proposition}
\newtheorem{lemma}[theorem]{Lemma}
\newtheorem{corollary}[theorem]{Corollary}
\theoremstyle{definition}
\newtheorem{definition}[theorem]{Definition}
\newtheoremstyle{customNumber}
     {}          % Space above (empty = default)
     {}          % Space below
     {\itshape}  % Body font
     {}          % Indent amount (empty = no indent)
     {\bfseries} % Thm head font
     {.}         % Punctuation after thm head
     { }         % Space after thm head (\newline = linebreak)
     {\thmname{#1}\thmnumber{ #2}\thmnote{ #3}}
\theoremstyle{customNumber}
\renewcommand{\phi}{\varphi}
\renewcommand{\rho}{\varrho}
\newcommand{\norm}[1]{\lVert#1\rVert}
\newcommand{\abs}[1]{\lvert#1\rvert}
\newcommand{\diam}{\operatorname{diam}}
\newcommand{\on}{\:\mbox{\rule{0.1ex}{1.2ex}\rule{1.1ex}{0.1ex}}\:}
\newcommand{\bb}[1]{\llbracket #1\rrbracket}
\DeclareMathOperator{\N}{\mathbb{N}}
\DeclareMathOperator{\R}{\mathbb{R}}
\DeclareMathOperator{\Z}{\mathbb{Z}}
\DeclareMathOperator{\Lip}{Lip}
\DeclareMathOperator{\sgn}{sgn}
\DeclareMathOperator{\spt}{spt}
\DeclareMathOperator{\st}{st}
\DeclareMathOperator{\interior}{int}
\DeclareMathOperator{\mass}{\mathbf{M}}
\DeclareMathOperator{\bI}{\mathbf{I}}
\DeclareMathOperator{\bM}{\mathbf{M}}
\DeclareMathOperator{\bN}{\mathbf{N}}
\DeclareMathOperator{\id}{id}
\DeclareMathOperator{\Hull}{Hull}
\DeclareMathOperator{\vol}{Vol}
\DeclareMathOperator{\Haus}{\mathscr{H}}
\DeclareMathOperator{\IC}{IC}
\DeclareMathOperator{\set}{set}
\DeclareMathOperator{\apmd}{apmd}
\DeclareMathOperator{\md}{md}
\DeclareMathOperator{\ap}{{\rm ap}}
\newdimen\vintkern\vintkern11pt
\def\vint{-\kern-\vintkern\int}
\renewcommand{\epsilon}{\varepsilon}
\def\Xint#1{\mathchoice
{\XXint\displaystyle\textstyle{#1}}%
{\XXint\textstyle\scriptstyle{#1}}%
{\XXint\scriptstyle\scriptscriptstyle{#1}}%
{\XXint\scriptscriptstyle\scriptscriptstyle{#1}}%
\!\int}
\def\XXint#1#2#3{{\setbox0=\hbox{$#1{#2#3}{\int}$ }
\vcenter{\hbox{$#2#3$ }}\kern-.6\wd0}}
\def\dashint{\Xint-}
\patchcmd{\@setaddresses}{\indent}{\noindent}{}{}
\patchcmd{\@setaddresses}{\indent}{\noindent}{}{}
\patchcmd{\@setaddresses}{\indent}{\noindent}{}{}
\patchcmd{\@setaddresses}{\indent}{\noindent}{}{}
\title[Geometric and analytic structures on metric manifolds]{Geometric and analytic structures on metric spaces homeomorphic to a manifold}
\keywords{Metric manifolds, integral currents, rectifiable sets, Poincar\'e inequalities, relative isoperimetric inequalities, Lipschitz-volume rigidity}
\subjclass[2020]{Primary 53C23; Secondary 49Q15, 46E35, 28A75}
\thanks{This research was partially supported by Swiss National Science Foundation grant 212867. D.~M.~was moreover supported by ERC Starting Grant 713998 GeoMeG}
\author{Giuliano Basso}
\address{Max Planck Institute for Mathematics \\ Vivatsgasse 7 \\ 53111 Bonn \\ Germany}
\email{basso@mpim-bonn.mpg.de}
\author{Denis Marti}
\address{Department of Mathematics\\ University of Fribourg\\  Chemin du Mus\'ee 23\\  1700 Fribourg, Switzerland}
\email{denis.marti@unifr.ch}
\author{Stefan Wenger}
\address{Department of Mathematics\\ University of Fribourg\\  Chemin du Mus\'ee 23\\   1700 Fribourg, Switzerland}
\email{stefan.wenger@unifr.ch}
\begin{document}

\begin{abstract}
We study metric spaces homeomorphic to a closed oriented manifold from both geometric and analytic perspectives. We show that such spaces (which are sometimes called metric manifolds) admit a non-trivial integral current without boundary, provided they satisfy some weak assumptions. The existence of such an object should be thought of as an analytic analog of the fundamental class of the space and can also be interpreted as giving a way to make sense of Stokes' theorem in this setting. Using our existence result, we establish that Riemannian manifolds are Lipschitz-volume rigid among certain metric manifolds and we show the validity of (relative) isoperimetric inequalities in metric $n$-manifolds that are Ahlfors $n$-regular and linearly locally contractible. The former statement is a generalization of a well-known Lipschitz-volume rigidity result in Riemannian geometry and the latter yields a relatively short and conceptually simple proof of a deep theorem of Semmes about the validity of Poincar\'e inequalities in these spaces. Finally, as a further application, we also give sufficient conditions for a metric manifold to be rectifiable.
\end{abstract}

\maketitle
\vspace{-0.2em}

\section{Introduction}

\subsection{Overview} 
In this article we study metric spaces that are homeomorphic to closed oriented manifolds. We are interested in the question whether such spaces admit a non-trivial metric integral current without boundary and what are implications of the existence of such a current. In Euclidean space, integral currents were first introduced and studied by Federer--Fleming around 1960 in their seminal paper \cite{federer-1960} in connection with Plateau's problem of finding area minimizing surfaces of any dimension with prescribed boundary. Nowadays, they have become a standard tool in geometric measure theory, with a wide range of applications that go well beyond area minimization problems. We will work with an extension of the theory to the setting of complete metric spaces due to Ambrosio and Kirchheim \cite{ambrosio-kirchheim-2000}. This theory provides a rich and powerful framework with a diverse range of applications, see \cite{Faessler-Orponen-2019, Huang-Kleiner-Stadler-2022, Kleiner-Lang-2020,  Song-2023, Sormani-Wenger-2011, Wenger-2008}. 

We prove two main existence results, Theorems~\ref{thm:main} and \ref{thm:existence-intcurr-dimension2-intro}. These state that any metric \(n\)-manifold satisfying weak assumptions admits an essentially unique integral \(n\)-current without boundary. For example, when \(n=2\), the only requirement we impose on the metric manifold is finite Hausdorff 
\(2\)-measure.  The so obtained current shares many properties with the fundamental class of the space. Most interestingly, the existence of this “metric fundamental class" leads to implications concerning metric manifolds not involving any currents. These results include the validity of Poincar\'e inequalities for metric manifolds and a Lipschitz-volume rigidity theorem. 

Poincar\'e inequalities are of vital importance in the field of analysis on metric spaces, and many aspects of first order calculus have been developed in the setting of doubling metric measure spaces supporting such inequalities, see for example \cite{Cheeger-diff-Lip-1999, Hajlasz-Koskela-1995, Heinonen-Koskela-qc-Acta-1998, Heinonen-Koskela-Shanmugalingam-Tyson-2015}. We use our main existence results to prove a relative isoperimetric inequality in metric \(n\)-manifolds that are Ahlfors $n$-regular and linearly locally contractible; see Theorem~\ref{thm:relative-isoperimetric-inequality}. It is well-known that such a result implies a Poincar\'e inequality. This yields a conceptually simple and short proof of a deep theorem of Semmes \cite{semmes-1996}.

Lipschitz-volume rigidity results are generalizations of the following statement regarding closed, oriented, Riemannian manifolds \(X\) and \(Y\) of the same dimension. If \(X\) and \(Y\) have the same volume, then any \(1\)-Lipschitz map of degree one from \(X\) to \(Y\) is an isometric homeomorphism, see \cite{besson-1995, burago--1995, burago-2010}. Our existence theorems, combined with Z\"ust's recent work \cite{Zuest-23}, directly yield a Lipschitz-volume rigidity theorem for metric manifolds; see Theorem~\ref{thm:Lipschitz-Volume-rigidity}. When restricted to orientable metric surfaces this recovers a recent result of \cite{Meier-Ntalampekos-23}.

\subsection{Statement of main results}
For $n\geq 0$ we denote by $\bI_n(X)$ the abelian group of $n$-dimensional metric integral currents on a complete metric space $X$ in the sense of Ambrosio--Kirchheim \cite{ambrosio-kirchheim-2000}. We refer to Section~\ref{sec:currents} for the relevant definitions. Any Lipschitz map $\varphi\colon X\to Y$ naturally induces a homomorphism $\varphi_\#\colon\bI_n(X)\to \bI_n(Y)$ for every $n\geq 0$. Moreover, every Lipschitz $n$-chain in $X$ induces an $n$-dimensional metric integral current in $X$. However, not every integral current is of this form. There is a natural boundary operator $\partial$ for metric integral currents which makes 
 \[
\dotsm \overset{\partial}{\longrightarrow} \bI_n(X) \overset{\partial}{\longrightarrow} \bI_{n-1}(X) \overset{\partial}{\longrightarrow} \dotsm \overset{\partial}{\longrightarrow} \bI_0(X) 
\]
into a chain complex. The associated $n$-th homology group is denoted by $H_n^{\IC}(X)$ and called the $n$-th homology group via integral currents. 

Let $M$ denote a closed, oriented, Riemannian $n$-manifold. Throughout this article all manifolds will be assumed connected and without boundary. Then $H_n^{\IC}(M)$ is isomorphic to the $n$-th singular homology group of $M$ and thus to $\Z$. The current $\bb{M}$ given by integration (of $n$-forms) over $M$ is a generator of $H_n^{\IC}(M)$. However, for an arbitrary metric space \(X\) the homology groups via integral currents are in general not isomorphic to the singular homology groups, except under very restrictive conditions on $X$, see \cite{mitsuishi-2019, riedweg2009singular}. 

To each integral \(n\)-current \(T\) belongs an associated finite Borel measure $\norm{T}$, called the mass measure of $T$. This measure can be thought of as the volume measure on the generalized surface $T$. As a prime example, for a Riemannian \(n\)-manifold as above, the mass measure of \(\bb{M}\) is equal to the Riemannian volume measure on \(M\). 

Our first theorem provides conditions on a metric manifold 
under which the top-dimensional homology group via integral currents is isomorphic to the singular homology group and the mass measure of the generator is proportional to the Hausdorff measure. In order to state the theorem, recall that a metric space $X$ is said to be \textit{linearly locally contractible} if there exists $\Lambda\geq 1$ such that every ball of radius $0< r< \Lambda^{-1}\diam X$ in $X$ is contractible inside the ball of radius $\Lambda r$ with the same center. This condition rules out the existence of cusps and neck pinches, and is for example preserved under quasisymmetric homeomorphisms. 

\begin{theorem}\label{thm:main}
Let $X$ be a metric space which has finite Hausdorff $n$-measure and is homeomorphic to a closed, oriented, smooth $n$-manifold $M$. If $X$ is linearly locally contractible, then the \(n\)-th homology group \(H_n^{\IC}(X)\) via integral currents is infinite cyclic and it admits a generator \(T\) such that \(\phi_{\#}T=\deg(\phi)\cdot \bb{M}\) for every Lipschitz map \(\phi\colon X \to M\). Moreover, there exist $C$, $c>0$ such that \(\norm{T} \leq C \cdot \Haus^n\) and
\begin{equation}\label{eq:lower-bound-1}
\norm{T}(B(x, r)) \geq c \cdot r^n
\end{equation}
for every \(x\in X\) and every \(0\leq r \leq \diam X\).
\end{theorem}

Here, $M$ is equipped with any Riemannian metric, $\deg(\varphi)$ is the (topological) degree of $\varphi$, and \(\Haus^n\) denotes the Hausdorff \(n\)-measure on \(X\). The constant $C$ in the theorem only depends on the dimension $n$, while $c$ depends on $n$ and the linear local contractibility constant. The assumption that \(X\) is linearly locally contractible frequently appears in geometry and analysis. In particular, in the context of uniformization problems for metric spaces, quasiconformal geometry, the study of Gromov hyperbolic groups, and finiteness theorems in geometry, see for example \cite{Bonk-Kleiner-2002, david-american-2016, Ferry-1994, Grove-Petersen-Wu-1990, Heinonen-ICM, Heinonen-Koskela-qc-Acta-1998, heinonen-sullivan-2002,  Kleiner-ICM, Laakso-Ainfty-2002, Semmes-good-metric-spaces-1996}. 

Results similar to our Theorem~\ref{thm:main} were obtained by Heinonen--Sullivan \cite{heinonen-sullivan-2002} and Kirsil\"{a} \cite{kirsila-2016} in a much more restricted setting; see also \cite{heinonen-keith-2011} for an application.
We remark that it follows directly from the bounds in the theorem that the mass measure $\norm{T}$ is proportional to $\Haus^n$. Moreover, as an easy consequence, we obtain the following rectifiability result. Recall that a metric space is called \textit{$n$-rectifiable} if it can be covered, up to an $\Haus^n$-negligible set, by countably many Lipschitz images of subsets of $\R^n$.

\begin{corollary}\label{cor:recitifiability}
 Let $X$ be a metric space which has finite Hausdorff $n$-measure and is homeomorphic to a closed, orientable, smooth $n$-manifold. If $X$ is linearly locally contractible, then $X$ is $n$-rectifiable and there exists $c>0$ such that every ball in $X$ of radius \(0 \leq r \leq \diam X\) has Hausdorff $n$-measure at least $c\cdot r^n$.
\end{corollary}

The constant \(c\) in the corollary depends only on \(n\) and the linear local contractibility constant of \(X\). The asserted lower bound on the Hausdorff measure of balls generalizes a result of Kinneberg \cite{Kinneberg-2018} who proved an analogous result with the additional assumption that the metric manifold is doubling. We remark that although this lower bound formally follows directly from Theorem~\ref{thm:main}, we actually prove it independently in Theorem~\ref{thm:lower-bound-Hausdorff-measure} along the way to the proof of Theorem~\ref{thm:main}. 

More substantial applications of Theorem~\ref{thm:main} can be found in Section~\ref{sec:applications} below. Corollary~\ref{cor:recitifiability} is false without the assumption that $X$ is linearly locally contractible. This follows from \cite[Appendix A]{Sormani-Wenger-calcvar-2010}. The same example also shows that the lower bound on mass \eqref{eq:lower-bound-1} in Theorem~\ref{thm:main} is false without the linear local contractibility assumption. Our next result guarantees existence of a non-trivial cycle in dimension \(2\) without assuming linear local contractibility.

\begin{theorem}\label{thm:existence-intcurr-dimension2-intro}
Let $X$ be a metric space of finite Hausdorff $2$-measure that is homeomorphic to a closed, oriented, smooth surface $M$. Then the \(2\)nd homology group \(H_2^{\IC}(X)\) via integral currents is infinite cyclic and admits a generator \(T\) such that 
\(
\phi_{\#}T=\deg(\phi)\cdot \bb{M}
\) 
for every Lipschitz map \(\phi\colon X \to M\). Moreover, \(\norm{T} \leq C \cdot \Haus^2\) for some universal constant \(C\).
\end{theorem}

We emphasize that if $X$ is as in the theorem then in general there do not exist any Lipschitz maps $f\colon U\to X$, defined on an open subset $U\subset \R^2$, such that the image of $f$ has non-trivial Hausdorff $2$-measure. In particular, Theorem~\ref{thm:existence-intcurr-dimension2-intro} is false if the singular Lipschitz homology group \(H_2^{\Lip}(X)\) is considered instead of \(H_2^{\IC}(X)\).
 
\subsection{Applications}\label{sec:applications}
We now proceed by discussing applications of Theorems~\ref{thm:main} and \ref{thm:existence-intcurr-dimension2-intro}. Recently, Z\"ust proved a Lipschitz-volume rigidity result for so-called integral current spaces in \cite{Zuest-23}.  This result together with our Theorems~\ref{thm:main} and \ref{thm:existence-intcurr-dimension2-intro} lets us deduce the following Lipschitz-volume rigidity theorem for metric manifolds.

\begin{theorem}\label{thm:Lipschitz-Volume-rigidity}
Let $X$ be a metric space homeomorphic to a closed, orientable, smooth $n$-manifold. In case $n>2$ assume furthermore that $X$ is linearly locally contractible. If $M$ is a closed, orientable, Riemannian $n$-manifold such that $\Haus^n(X)=\Haus^n(M)$, then every surjective $1$-Lipschitz map from $X$ to $M$ is an isometric homeomorphism.
\end{theorem}

When $n=2$ this recovers the very recent result \cite[Theorem 1.1]{Meier-Ntalampekos-23} in the case that $X$ and $M$ are orientable. The special case that \(n=2\) and \(X\) is also quasiconvex allows for a particularly simple and accessible proof that uses only techniques from \cite{basso2021undistorted, Meier-Wenger, Zuest-23}, see Section~\ref{sec:section8} below. Lipschitz-volume rigidity results are useful in the context of boundary rigidity and uniqueness of minimal fillings of closed Riemannian manifolds \cite{burago-2010, burago-2013}. Variants of Theorem~\ref{thm:Lipschitz-Volume-rigidity} for \(X\) an Alexandrov space, limit RCD space and integral current space, respectively, were obtained in \cite{basso2022filling, perales-2023, li-2015,  li-2014, storm-2006,   Zuest-23}.

Theorem~\ref{thm:main} can also be used to establish geometric properties such as relative isoperimetric inequalities under an additional Ahlfors regularity condition. Recall that a metric space $X$ is called \textit{Ahlfors $n$-regular} if there exists $\alpha\geq 1$ such that 
$$
\alpha^{-1}\cdot r^n\leq \Haus^n(B(x,r))\leq \alpha \cdot r^n
$$ 
for every $x\in X$ and every $r\in(0,\diam X)$. 
The (lower) \textit{Minkowski content} of a subset $E\subset X$ with respect to an open subset $U\subset X$ is defined by 
$$
\mathscr{M}_-(E\, |\ U)= \liminf_{r\searrow 0}\,\frac{\Haus^n(E_r\cap U) - \Haus^n(E\cap U)}{r},
$$ 
where $E_r=\{x\in X: d(x,E)<r\}$ denotes the open $r$-neighborhood of $E$ in $X$.  This provides a substitute for the surface measure of the boundary $\partial E$ of $E$ inside $U$ and is closely related to the perimeter of $E$ inside $U$; see \cite{ambrosio-dimarino-gigli-2017}. For an open ball $B=B(x,r)$ and $\lambda>0$ we denote by $\lambda B$ the open ball $B(x,\lambda r)$ with the same center $x$ and stretched radius $\lambda r$.

\begin{theorem}\label{thm:relative-isoperimetric-inequality}
Suppose \(X\) is a metric space homeomorphic to a closed, orientable, smooth manifold of dimension \(n\geq 2\). If \(X\) is linearly locally contractible and Ahlfors \(n\)-regular, then there exist \(C\), \(\lambda \geq 1\) such that 
\begin{equation}\label{eq:rel-isop-intro}
    \min\bigl\{\Haus^n\big(E\cap B\big), \Haus^n\big(B \setminus E\big) \bigr\} \leq C \cdot\mathscr{M}_-\big( E\, | \, \lambda B\big)^{\,\tfrac{n}{n-1}}
\end{equation}
for every Borel subset \(E\subset X\) and every open ball \(B\subset X\).
\end{theorem}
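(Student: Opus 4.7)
The plan is to combine the integral current $T\in\bI_n(X)$ furnished by Theorem~\ref{thm:existence-int-curr-without-Nagata} with a slicing argument and an isoperimetric filling via Plateau's problem in injective metric spaces. A preliminary step is to upgrade Theorem~\ref{thm:existence-int-curr-without-Nagata} to the two-sided comparison $c\,\Haus^n\le \|T\|\le C\,\Haus^n$; the upper bound is already asserted, while the lower bound follows from the identity $\phi_{\#}T=\deg(\phi)\cdot\bb{M}$ applied to suitable Lipschitz maps $\phi\colon X\to M$ of nonzero degree that are almost constant outside a small neighborhood of a prescribed ball $B(x,r)$, constructed using the LLC and Ahlfors regularity hypotheses.

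Fix $B=B(x_0,R)$, and, by symmetry, assume $\Haus^n(E\cap B)\le\Haus^n(B\setminus E)$. With $\rho(x):=d(x,x_0)$, standard slicing gives, for almost every $r$,
\begin{equation*}
\partial\bigl(T\mres(E\cap B(x_0,r))\bigr) \;=\; \partial(T\mres E)\mres B(x_0,r) \;-\; \langle T\mres E,\rho,r\rangle,
\end{equation*}
whose first summand has mass at most a constant multiple of $\mathscr{M}_-(E\,|\,\lambda B)$ (using $\partial T=0$ and $\|T\|\sim\Haus^n$), while $\int_R^{\lambda R}\mass(\langle T\mres E,\rho,r\rangle)\,dr\le\|T\|(\lambda B\cap E)$. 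Markov's inequality selects $R'\in(R,\lambda R)$ so that $Z:=T\mres(E\cap B(x_0,R'))$ satisfies
$$\mass(\partial Z)\;\le\; C\bigl(\mathscr{M}_-(E\,|\,\lambda B)+R^{-1}\Haus^n(\lambda B\cap E)\bigr).$$
Embedding $X$ isometrically into its injective envelope and solving Plateau's problem there, one fills $\partial Z$ with the Euclidean isoperimetric exponent for integral cycles in injective targets; transferring the filling back to $X$ using linear local contractibility produces $U\in\bI_n(X)$ with $\partial U=\partial Z$ and $\mass(U)\le C\,\mass(\partial Z)^{n/(n-1)}$.

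Now $Z-U$ is an integral $n$-cycle in $X$, so Theorem~\ref{thm:existence-int-curr-without-Nagata} forces $Z-U=k\,T$ for some $k\in\Z$. When $k=0$, $\Haus^n(E\cap B)\le C\,\mass(U)\le C\bigl(\mathscr{M}_-(E\,|\,\lambda B)+R^{-1}\Haus^n(\lambda B\cap E)\bigr)^{n/(n-1)}$, and a short dichotomy on the size of $\mathscr{M}_-(E\,|\,\lambda B)$ relative to $R^{n-1}$ — invoking in the small-content regime the non-relative isoperimetric inequality in $X$ deduced from Theorem~\ref{thm:existence-int-curr-without-Nagata} — absorbs the spurious $R^{n-1}$ contribution and yields the claimed estimate. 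When $k\ne 0$ the cycle equation gives $\mass(T)\le\mass(Z)+\mass(U)$; the WLOG bound $\mass(Z)\le\mass(T)/2$ then forces $\mass(U)\gtrsim\mass(T)$, and the same filling bound on $\mass(U)$ again delivers the inequality. The principal obstacle is the filling step, namely producing $U$ \emph{inside} $X$ with the correct isoperimetric exponent rather than merely in the injective envelope: transferring the Plateau filling back to $X$ while preserving the mass bound is precisely where linear local contractibility is used in an essential way.
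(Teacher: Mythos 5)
Your high-level strategy---produce the top-dimensional cycle $T$, slice near $\partial E$ and near $\partial B$, fill the resulting $(n-1)$-cycle via Plateau's problem in the injective envelope, and compare with $T$---is indeed in the spirit of the paper's argument. However, there are several genuine gaps that the paper's proof is specifically designed to avoid.

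First, the slicing identity you write for $\partial\bigl(T\mres(E\cap B(x_0,r))\bigr)$ presupposes that $T\mres E$ is a normal current, but for a general Borel set $E$ the boundary $\partial(T\mres E)$ need not have locally finite mass (already for $T=\bb{S^n}$ and $E$ with fractal boundary). There is also no reason for $\mass\bigl(\partial(T\mres E)\mres B(x_0,r)\bigr)$ to be controlled by the lower Minkowski content $\mathscr{M}_-(E\,|\,\lambda B)$, which is a liminf over neighborhoods $E_s$. The paper instead works with $T\mres E_s$ for a well-chosen small $s$ (using the liminf in the definition of $\mathscr{M}_-$ and the slicing inequality for the distance-to-$E$ function to select a good $s$), thereby obtaining a genuine integral current with boundary mass bounded by $\mathscr{M}_-(E\,|\,6B)$. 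Without this neighborhood device your selection of $R'$ and the bound on $\mass(\partial Z)$ do not get off the ground.

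Second, and most seriously, the step you yourself flag as ``the principal obstacle''---transferring the Plateau filling from $E(X)$ back to $X$ with the correct mass bound---does not follow from linear local contractibility. The retraction $\pi$ produced by Proposition~\ref{prop:good-retraction-property} is continuous and satisfies a one-sided Lipschitz bound $d(\pi(y),x)\leq C\,d(y,x)$ for $x\in X$, but it is not Lipschitz, so $\pi_\#S$ need not be a current of finite mass, let alone one with $\mass(\pi_\#S)\lesssim\mass(S)$. The paper never attempts this: instead it keeps the filling $S$ inside $E(X)$, pushes the cycle $T\mres E_r-S$ forward to $M$ via a Lipschitz approximation $\varphi$ of $\rho\circ\pi$ and invokes the constancy theorem on $M$; separately it bounds $\Haus^n(\pi(\spt S))$ by covering $\spt S$ with balls of controlled multiplicity (Lemma~\ref{lem:Nagata-cover}) and combining the interior lower density bound $\norm{S}(B(y,s))\gtrsim s^n$ with Ahlfors regularity of $X$. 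This ``measure of the projection'' argument is the central new idea and your proposal has no substitute for it. Similarly, obtaining $Z-U=kT$ requires $U\in\bI_n(X)$, which is exactly what you cannot construct; the paper instead performs a two-stage filling ($U$ filling $\partial(V\mres B(x_0,R'))$ and then $S$ filling $V\mres B(x_0,R')-U$) precisely to stay away from $B(x_0,2R)$ and apply the constancy theorem cleanly. Finally, the dichotomy you invoke to absorb the spurious $R^{-1}\Haus^n(\lambda B\cap E)$ term is not sound as stated: when $\mathscr{M}_-(E\,|\,\lambda B)$ is small it is still possible that $\Haus^n(\lambda B\cap E)\sim R^n$, and the ``non-relative isoperimetric inequality'' you appeal to would apply only to sets of small diameter, which $E\cap\lambda B$ need not be. (Your sketch of the lower bound $\norm{T}\geq c\Haus^n$ is also too vague; this is Theorem~\ref{thm:lower-bound-mass} in the paper and itself requires the retraction $\pi$ and a constancy argument.)
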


The constants $C$ and $\lambda$ only depend on the data of $X$, that is, the dimension $n$, the Ahlfors regularity constant, and the linear local contractibility constant.
The proof heavily relies on our main existence result, Theorem~\ref{thm:main} above. In the special case that $X$ is a smooth (Riemannian) manifold, the idea of proof goes back to Gromov and is briefly sketched in \cite[Remark B.20]{semmes-1996}. Notice that in this special case, the existence of a current $T$ as in Theorem~\ref{thm:main} is trivial.

Relative isoperimetric inequalities such as \eqref{eq:rel-isop-intro} are intimately related to Poincar\'e inequalities. Let $(X,d)$ be a metric space equipped with some finite Borel measure $\mu$. A Borel function $g\colon X\to[0,\infty]$ is an upper gradient of a function $u\colon X\to \R$ if 
$$
|u(\gamma(1)) - u(\gamma(0))|\leq \int_\gamma g
$$ 
for every curve $\gamma\colon[0,1]\to X$ of finite length. The metric measure space $(X, d, \mu)$ is said to support a weak $p$-Poincar\'e inequality if there exist $C,\lambda\geq 1$ such that 
$$
\vint_{B} |u-u_B|\,d\mu \leq C\diam(B)\cdot\left(\vint_{\lambda B}g^p\,d\mu\right)^{\frac{1}{p}}
$$ 
for every ball $B\subset X$ and every function-upper-gradient pair $(u,g)$. Here, we have used the notation $u_B=\dashint_{B} u\,d\mu = \mu(B)^{-1}\int_Bu\,d\mu$. 
Poincar\'e inequalities, especially in combination with a doubling condition, provide a suitable setting to build a robust theory of first order calculus in metric measure spaces and they have many geometric and analytic consequences. See for example \cite{Heinonen-Koskela-Shanmugalingam-Tyson-2015} and the many references therein for details.
It is well-known, see e.g.~\cite{Bobkov-Houdre-1997, Kinnunen-Korte-Shanmugalingam-Tuominen-2012, Korte-Lahti-2014}, that relative isoperimetric inequalities imply Poincar\'e inequalities. As a consequence of Theorem~\ref{thm:relative-isoperimetric-inequality}, we therefore obtain the following corollary. 

\begin{corollary}\label{cor:Semmes-theorem-Poincare}
Let $X$ be a metric space homeomorphic to a closed, orientable, smooth manifold of dimension $n\geq 2$. If $X$ is Ahlfors $n$-regular and linearly locally contractible then it supports a weak $1$-Poincar\'e inequality.
\end{corollary}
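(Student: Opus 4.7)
The strategy is to feed Theorem~\ref{thm:relative-isoperimetric-inequality} into the classical truncation argument of Maz'ya, in the form adapted to the metric measure setting in the references \cite{Bobkov-Houdre-1997, Kinnunen-Korte-Shanmugalingam-Tuominen-2012, Korte-Lahti-2014} cited immediately above. Ahlfors \(n\)-regularity makes \(\Haus^n\) doubling on \(X\), which by a standard approximation argument reduces the task to verifying the inequality for pairs \((u,g)\) with \(u\) a bounded Lipschitz function and \(g=\lip u\).

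Fix an open ball \(B = B(x,r)\) and let \(m\) be a median of \(u\) over \(B\). The functions \(v := (u-m)_+\) and \(w := (u-m)_-\) satisfy \(\Haus^n(\{v>0\}\cap B)\leq \tfrac12 \Haus^n(B)\) and similarly for \(w\), so for every \(t>0\) the superlevel set \(E_t := \{v>t\}\) obeys \(\Haus^n(E_t\cap B)\leq \tfrac12 \Haus^n(B)\). Theorem~\ref{thm:relative-isoperimetric-inequality} then gives \(\Haus^n(E_t\cap B)^{(n-1)/n}\leq C^{(n-1)/n} \mathscr{M}_-(E_t \,|\, \lambda B)\). Combining this with the trivial bound \(\Haus^n(E_t\cap B) \leq \Haus^n(B)^{1/n} \cdot \Haus^n(E_t\cap B)^{(n-1)/n}\), the layer-cake identity \(\int_B v\,d\Haus^n = \int_0^\infty \Haus^n(E_t\cap B)\,dt\), and the coarea-type inequality
\begin{equation*}
    \int_0^\infty \mathscr{M}_-(\{u>t\}\,|\,U)\,dt \leq C_n \int_U \lip u\,d\Haus^n
\end{equation*}
(valid for Lipschitz \(u\) and open \(U\)), yields \(\int_B v\,d\Haus^n \leq C'\,\Haus^n(B)^{1/n}\int_{\lambda B} \lip u\,d\Haus^n\). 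Treating \(w\) symmetrically, dividing by \(\Haus^n(B)\), and invoking Ahlfors regularity together with the doubling estimate for \(\Haus^n(\lambda B)/\Haus^n(B)\) produces the median form \(\vint_B |u-m|\,d\Haus^n \leq C r \vint_{\lambda B}\lip u\,d\Haus^n\); the mean-value form then follows from \(\vint_B |u-u_B|\,d\Haus^n\leq 2 \vint_B|u-m|\,d\Haus^n\).

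The principal obstacle is the coarea-type inequality for lower Minkowski content displayed above. Intuitively it follows from the containment \(\{u>t\}_r\subset \{u>t-\Lip(u)r\}\), but obtaining a sharp pointwise bound by \(\lip u\) rather than the global Lipschitz constant requires a localization of this estimate on small balls together with a Fubini/partition argument along level sets of \(\lip u\); this is carried out precisely in the cited works. Once it is in place the Poincar\'e inequality is a direct consequence of Theorem~\ref{thm:relative-isoperimetric-inequality} together with the doubling and Ahlfors-regularity properties of \(\Haus^n\).
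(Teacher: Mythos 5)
Your proposal is correct and follows essentially the same route as the paper: the paper deduces from Theorem~\ref{thm:relative-isoperimetric-inequality} and Ahlfors regularity a linear (``strong'') relative isoperimetric inequality $\min\{\Haus^n(E\cap B),\Haus^n(B\setminus E)\}\leq CR\,\mathscr{M}_-(E\,|\,\lambda B)$ and then simply cites \cite{Bobkov-Houdre-1997,Kinnunen-Korte-Shanmugalingam-Tuominen-2012,Korte-Lahti-2014} for the implication that a doubling space with this inequality supports a weak $1$-Poincar\'e inequality. You unpack that citation via the Maz'ya truncation/layer-cake argument, and your intermediate estimate $\Haus^n(E_t\cap B)\leq\Haus^n(B)^{1/n}\Haus^n(E_t\cap B)^{(n-1)/n}$ performs the same nonlinear-to-linear conversion inside the integral, so the two proofs are the same up to how much is delegated to the references.
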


This result was originally proved by Semmes in his seminal article \cite{semmes-1996}. Our approach provides a short proof of Semmes' result, using completely different methods.
% which he proved as a special case of more general considerations (see \cite[Theorem~1.11]{semmes-1996}).

In all our results above the condition that $X$ be homeomorphic to a closed, oriented, smooth \(n\)-manifold can be weakened to the assumption that \(X\) be homeomorphic to a closed, oriented, Lipschitz manifold \(N\) equipped with a compatible metric. Thus, when $n\not=4$ it suffices to assume that $N$ be a topological manifold, since by Sullivan's result  \cite{sullivan-1979}, every such manifold admits a (unique) Lipschitz structure, see also \cite{tukia-vaisala-1981}.

\subsection{Strategy of proofs} We finally sketch some of the proofs of our main results and  first explain how to construct a current $T$ as in Theorem~\ref{thm:main}. Roughly speaking, the idea is to equip the rectifiable part of $X$ with a suitable orientation and define $T$ by integration over this set. In order to do this, decompose $X$ into its $n$-rectifiable part $E$ and its purely $n$-unrectifiable part $S$. Up to a negligible set, $E$ can be written as a countable union of pairwise disjoint images of bi-Lipschitz maps $\rho_i\colon K_i\to X$, defined on compact sets $K_i\subset\R^n$. The idea is now to define $T$ as the sum of currents $$
T_i(f,\pi) = \int_{K_i}\theta_i\cdot (f\circ\rho_i) \, \det(D(\pi\circ\rho_i))\,d\mathscr{L}^n
$$ 
for suitable choices of functions $\theta_i\colon K_i\to\R$ with $\abs{\theta_i}=1$ almost everywhere. For any such “orientation" $\theta_i$ this defines an integer rectifiable $n$-current on $X$ whose mass measure $\norm{T}$ is bounded, up to a constant, by the Hausdorff \(n\)-measure. We want to choose the $\theta_i$ in such a way that $T$ has zero boundary. Notice that $\rho_i$ can wildly switch orientation and that $K_i$ need not have finite perimeter, so $T_i$ can have infinite boundary mass. We use the linear local contractibility of $X$ in order to find a suitable  continuous extension of $\rho_i$ to an open neighborhood of $K_i$, see Lemma~\ref{lem:good-continuous-extension-LLC}, and then define $\theta_i$ to be the local degree of this extension at almost every point of $K_i$. Applying topological degree theory and a recent 
result of Bate \cite{bate-2020} about the Hausdorff measure of Lipschitz images of purely unrectifiable sets we show that $T$ has zero boundary, that is, $T(1,\pi)=0$. Indeed, by Bate's theorem we may assume after approximation that $\pi$ maps the purely unrectifiable part $S$ of \(X\) to a negligible set and thus for almost every $y\in\R^n$ the preimage $\pi^{-1}(y)$ does not intersect $S$. Using that for almost every $y\in\R^n$ the sum of the local degrees $\deg(\pi, x)$ for $x\in\pi^{-1}(y)$ equals the topological degree of $\pi$ (which is zero because $\R^n$ is non-compact) we can show that $T(1,\pi)=0$ and hence $\partial T=0$. The same reasoning also shows that $\varphi_\#T = \deg(\varphi)\cdot\bb{M}$ for every Lipschitz map $\varphi\colon X\to M$. Since \(X\) and \(M\) are homeomorphic and \(M\) is Riemannian, an easy Lipschitz approximation result shows that there exists a Lipschitz map \(\phi\colon X\to M\) with \(\deg(\phi)\neq 0\). Hence, we find that \(T\neq 0\) and so \(X\) admits a non-trivial integral \(n\)-cycle. 
In case of metric surfaces we do not need to impose the linear locally contractability condition to ensure the existence of such a cycle. The main reason for this is that in dimension \(2\) there are strong uniformization results available \cite{ Meier-Wenger, Ntalampekos-Romney-22, Ntalampekos-Romney-21}. These together with a straightforward pushforward construction for currents under Sobolev maps are sufficient to construct the current \(T\) when $n=2$. This is done in Section~\ref{sec:2-dim-existence}, where we prove Theorem~\ref{thm:existence-intcurr-dimension2-intro}.

We finally outline the proof of Theorem~\ref{thm:relative-isoperimetric-inequality} and content ourselves to explaining the ideas behind the isoperimetric inequality 
\begin{equation}\label{eq:isop-ineq-proof-sketch-intro} \Haus^n(E)\leq C\cdot \mathscr{M}_-(E\, | \, X)^{\frac{n}{n-1}}
\end{equation} 
for Borel sets $E\subset X$ of diameter at most  $\varepsilon \diam X$ for some small $\varepsilon>0$ depending only on the data of $X$. The proof of the general case uses the same ingredients but is more involved, see Section~\ref{sec:relative-isoperimetric-inequality}. When $X$ and $E$ are sufficiently smooth the idea of proof is sketched by Semmes in \cite[Remark B.20]{semmes-1996}, where it is attributed to Gromov. In this special case, one can interpret $E$ as a (Lipschitz) $n$-chain and the idea is to use a minimal filling of the boundary of $E$, interpreted as an $(n-1)$-cycle, in $\ell^\infty$ and compare the volume of $E$ with the mass of the minimal filling.

In our general setting, where there might not be any non-trivial Lipschitz $n$-chains in $X$, we can use the current $T$ from Theorem~\ref{thm:main} instead. We thus consider the restriction $T'=T\on E_r$ of $T$ to the open $r$-neighborhood of $E$ for some small $r>0$. Choosing $r$ suitably we may achieve that $T'$ is an integral current and satisfies $\mass(\partial T')\preceq \mathscr{M}_-(E\,|\, X)$, where $\preceq$ means inequality up to a constant depending only on the data of $X$. Now, view $X$ as a subset of its injective hull $E(X)$ and let $S\in \bI_n(E(X))$ be a minimal filling of $\partial T'$ in $E(X)$. The definition of \(E(X)\) can be found in Section~\ref{sec:metric-notions}. The spaces \(E(X)\) and \(\ell^\infty\) have similar properties, but we prefer to work with the injective hull because in our situation $E(X)$ is also compact. Such a filling \(S\) of \(\partial T'\) exists and satisfies a Euclidean isoperimetric inequality $$\mass(S)\preceq \mass(\partial S)^{\frac{n}{n-1}}\preceq \mathscr{M}_-(E\, |\, X)^{\frac{n}{n-1}}.$$ Moreover, there is a lower bound of the form 
\begin{equation}\label{eq:lower-bound-area-min-intro}
\norm{S}(B(y,s))\succeq s^n
\end{equation}
for any ball centered on the support $\spt S$ with radius  $0<s\leq d(y,\spt(\partial S))$. 

The proof of \eqref{eq:isop-ineq-proof-sketch-intro} is complete if we can show that $\Haus^n(E)\preceq\mass(S)$. In order to establish such a bound we first use the fact that $X$ is a doubling metric space as well as linearly locally contractible, to construct a continuous retraction $\pi\colon N_{R}(X) \to X$ from the $R$-neighborhood of $X$ in $E(X)$, where $R=\lambda \diam X$, with the property that $d(\pi(y),x)\leq C'\cdot d(y,x)$ for all $y\in N_R(X)$ and all $x\in X$. The constants \(C'\), \(\lambda>0\) depend only on the data of \(X\); see Proposition~\ref{prop:good-retraction-property}. It is not difficult to see that we may assume $\spt S\subset N_R(X)$. We now claim that $E\subset \pi(\spt S)$ and that $\Haus^n(\pi(\spt S))\preceq \mass(S)$, from which the desired bound follows. The first claim is a consequence of the fact that the push-forward of the integral $n$-cycle $Q = T'-S$ under a Lipschitz map $\varphi$ closely approximating $\rho\circ\pi$ for some homeomorphism $\rho\colon X\to M$ must be the zero current in $M$ by the constancy theorem. The proof of the second claim relies on the observation that for $y\in \spt S$ and $r=d(y, \spt(\partial S))$ we have $\pi(B(y,r))\subset B(\pi(y), 3C'r)$ and hence, together with the Ahlfors regularity and \eqref{eq:lower-bound-area-min-intro}, we obtain $$\Haus^n(\pi(B(y,r))\preceq \Haus^n(B(\pi(y), 3C'r))\preceq r^n\preceq \norm{S}(B(y,r)).$$ The second claim follows from this by using a suitable covering of $\spt S$ by balls of bounded multiplicity. This finishes the outline of the proof of \eqref{eq:isop-ineq-proof-sketch-intro}.

\subsection{Structure of the paper}
The paper is organized as follows. In Section~\ref{sec:prelims} we fix notation and recall notions concerning topological degree theory, Nagata dimension, and Sobolev maps with values in metric spaces. We also recall the necessary background regarding Bate's result \cite{bate-2020}. In Section~\ref{sec:currents} we state the basic notions from the theory of metric currents needed for this paper. We also briefly describe the main properties of minimal fillings in injective metric spaces needed in Sections~\ref{sec:main-result} and \ref{sec:relative-isoperimetric-inequality}. In the next section, as a preparatory result for the proof of Theorem~\ref{thm:main}, we prove a generalized version of a theorem of Kinneberg \cite{Kinneberg-2018}. This result does not use the theory of currents. In Section~\ref{sec:main-result} we are concerned with the proof of Theorem~\ref{thm:main}. There we also establish Corollary~\ref{cor:recitifiability}, our rectifiability criterion for metric manifolds. In Section~\ref{sec:2-dim-existence} we prove Theorem~\ref{thm:existence-intcurr-dimension2-intro}. Using Theorem~\ref{thm:main}, we prove in Section~\ref{sec:relative-isoperimetric-inequality} the relative isoperimetric inequality stated in Theorem~\ref{thm:relative-isoperimetric-inequality}. The validity of the weak \(1\)-Poincar\'e inequality, Corollary~\ref{cor:Semmes-theorem-Poincare}, is proved at the end of the same section. Finally, in Section~\ref{sec:section8}, we combine Theorems~\ref{thm:main} and \ref{thm:existence-intcurr-dimension2-intro} with Z\"ust's result \cite{Zuest-23} to derive Theorem~\ref{thm:Lipschitz-Volume-rigidity}. There we also give an alternative proof for the special case of Theorem~\ref{thm:existence-intcurr-dimension2-intro} when \(X\) is a quasiconvex metric surface. 
% \bigskip

% \noindent {\bf Acknowledgements:}
% Denis Marti was supported by ERC Starting Grant 713998 GeoMeG.

%%%%%%%%%%%%%%%%%%%%%%%%%%%%%%%%%%%%%%%%%%%%%%%%%%%%%%%%%%%%%%%%%%%%%%%%%%%%%%%%%%%%%%%%%%%%%%%%%%%%%%%%%%%%%%%%%%%%%%%%%%%%%%%%%%%%%%%%%%%%%%%%%%%%%%%%%%%%%%%%%%%%%%%%%%%%%%%%%%%%%%%%%%%%%%%%%%%%%%%%%%%%%%%%%%%%%%%%%%%%%%%%%%%%%%%%%%%%%%%%%%%%%%%%%%%%%%%%%%%%%%%%%%%%%%%%%

\section{Preliminaries}\label{sec:prelims}

\subsection{Metric notions}\label{sec:metric-notions}
Let \(X=(X,d)\) be a metric space. We use \(B(x, r)=\{ x'\in X : d(x, x') < r\} \) to denote the open ball with center \(x\in X\) and radius \(r>0\). We say that \(X\) is \textit{\(\Lambda\)-linearly locally contractible} if every ball \(B(x, r)\) is contractible in \(B(x, \Lambda r)\) for every \(r\in \big(0, \Lambda^{-1} \diam X\big)\). Given \(A\), \(B\subset X\), we write
\[
d(A, B)=\inf\big\{ d(a, b) : a\in A \text{ and } b\in B\big\}
\]
for the infimal distance between \(A\) and \(B\). Notice that, in particular, \(d(A, \varnothing)=\infty\). Further, for any \(A\subset X\),
\[
N_r^X(A)=\{ x\in X : d(x, A) < r\}
\]
denotes the open \(r\)-neighborhood of \(A\). If the ambient space \(X\) is clear from the context, we will often write \(N_r(A)\) instead of \(N_r^X(A)\). Moreover, we use the convention that \(N_r(A)=\varnothing\) if \(r\leq 0\). Occasionally, we also use the notation \(\bar{N}_r(A)\) to denote the union of \(N_r(A)\) and \(\{ x\in X : d(x, A)=r\}\). We say that \(A\subset X\) is an \textit{\(r\)-net} if \(N_r(A)=X\) and \(d(a, a')\geq r\) for all distinct \(a\), \(a'\in A\).

We say that a map \(f\colon X \to Y\) between metric spaces is \textit{\(L\)-Lipschitz} if \(d(f(x),f(y))\leq L d(x, y)\) for all \(x\), \(y\in X\).  The smallest \(L\geq 0\) such that \(f\) is \(L\)-Lipschitz is denoted \(\Lip(f)\). If \(f\) is injective such that both \(f\) and \(f^{-1}\) are \(L\)-Lipschitz, then we say that \(f\) is \textit{\(L\)-bi-Lipschitz}. Given two maps \(f, g\colon X \to Y\) we write
\[
d(f, g)=\sup\big\{ d(f(x), g(x)) : x\in X\big\}
\]
for the uniform distance between \(f\) and \(g\). We will often use the following easy-to-check Lipschitz approximation result.

\begin{lemma}\label{lem:lip-approx-easy}
Let \(f\colon X \to Y\) be a continuous map form a compact metric space \(X\) to a separable metric space \(Y\) which is an absolute Lipschitz neighborhood retract. Then for every \(\epsilon>0\) there exists a Lipschitz map \(g \colon X \to Y\) with \(d(f, g)<\epsilon\). 
\end{lemma}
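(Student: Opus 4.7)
The strategy is to reduce the problem to Lipschitz approximation inside a Banach space and then project the approximation back to $Y$ using the ALNR property. First, isometrically embed $Y$ into a Banach space $Z$; for instance, since $Y$ is separable, the Kuratowski--Fr\'echet embedding $y \mapsto \big(d(y, y_k) - d(y_0, y_k)\big)_k$ (for a dense sequence $(y_k)$) realizes $Y$ as an isometric subspace of $Z = \ell^\infty$. Because $Y$ is an absolute Lipschitz neighborhood retract, there exist $\delta > 0$ and an $L$-Lipschitz retraction $\pi \colon N_\delta^Z(Y) \to Y$. Set $\eta := \min\{\delta, \epsilon/L\}$.

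Next I would build a Lipschitz map $\tilde g \colon X \to Z$ with $\sup_{x \in X} \|\tilde g(x) - f(x)\|_Z < \eta/2$ by the standard partition of unity construction. Viewing $f$ as a continuous map into $Z$, compactness of $X$ gives uniform continuity, so there exists $r > 0$ with $\|f(x) - f(x')\|_Z < \eta/2$ whenever $d(x, x') < r$. Pick a finite $(r/2)$-net $x_1, \dots, x_N \in X$, set $\phi_i(x) = \max\{0, r/2 - d(x, x_i)\}$, and note that $\Phi := \sum_j \phi_j$ is continuous, strictly positive, and hence bounded below by some $c > 0$ on the compact space $X$. Therefore $\psi_i := \phi_i/\Phi$ is a Lipschitz partition of unity subordinate to $\{B(x_i, r/2)\}$, and the map
\[
\tilde g(x) := \sum_{i=1}^N \psi_i(x)\, f(x_i)
\]
is Lipschitz. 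Since $\psi_i(x) > 0$ forces $d(x, x_i) < r/2 < r$, one reads off $\|\tilde g(x) - f(x)\|_Z < \eta/2$ for all $x \in X$.

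Finally, since $f(X) \subset Y$ and $\|\tilde g(x) - f(x)\|_Z < \delta$, the image $\tilde g(X)$ lies in $N_\delta^Z(Y)$, so $g := \pi \circ \tilde g \colon X \to Y$ is well-defined and Lipschitz. Using that $\pi$ restricts to the identity on $Y$,
\[
d(f(x), g(x)) = \|\pi(f(x)) - \pi(\tilde g(x))\|_Z \leq L \cdot \|f(x) - \tilde g(x)\|_Z < L \cdot \frac{\eta}{2} \leq \frac{\epsilon}{2} < \epsilon
\]
for every $x \in X$, which is the desired conclusion. The only conceptual step is invoking the ALNR property to pass to the Banach-space setting, where convex combinations provide Lipschitz approximations; I do not anticipate any substantive technical obstacles beyond arranging the quantitative constants $\delta$, $L$, $\eta$, $r$ in the correct order.
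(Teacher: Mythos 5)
Your proof is correct in substance and follows the same overall strategy as the paper: isometrically embed $Y$ into $\ell^\infty$ via Fr\'echet, produce a Lipschitz approximation with values in the ambient Banach space, and then retract onto $Y$ using the ALNR property. The difference is in the middle step: the paper invokes Semmes' Lemma~2.4 coordinate-by-coordinate to get the $\ell^\infty$-valued approximation, whereas you build it from scratch with a Lipschitz partition of unity subordinate to a finite net of $X$ and take the resulting convex combination of values of $f$. Your construction is more self-contained and arguably cleaner, since it avoids having to check that the coordinate-wise approximations can be assembled with a uniform Lipschitz bound; the paper's version is shorter but offloads that work to the cited lemma.

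One small imprecision: you assert that ALNR gives a $\delta>0$ and an $L$-Lipschitz retraction $\pi\colon N_\delta^Z(Y)\to Y$. The definition only provides a retraction from \emph{some} open neighborhood $U\subset Z$ of $Y$; since $Y$ is merely separable (not compact), there need not exist a uniform tubular neighborhood $N_\delta^Z(Y)$ contained in $U$. This is harmless here because $f(X)$ is compact and contained in $Y\subset U$, so $\delta := d\bigl(f(X), Z\setminus U\bigr) > 0$, and any $\tilde g$ with $\sup_x\|\tilde g(x)-f(x)\| < \delta$ has image in $U$, after which your argument goes through verbatim. You should replace ``$N_\delta^Z(Y)$'' with this compactness observation to make the step rigorous.
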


Recall that \(Y\) is said to be an \textit{absolute Lipschitz neighborhood retract} if there exists \(C>0\) such that whenever \(Y\subset Y_0\) for some metric space \(Y_0\), then there exists a \(C\)-Lipschitz retraction \(R\colon U \to Y\) where \(U\subset Y_0\) is an open neighborhood of \(Y\). Every closed Riemannian \(n\)-manifold is an absolute Lipschitz neighborhood retract, see e.g. \cite[Theorem~3.1]{hohti-1993}.

\begin{proof}[Proof of Lemma~\ref{lem:lip-approx-easy}]
If \(Y=\R\), this follows directly from \cite[Lemma~2.4]{semmes-1996}. Since \(Y\) is separable we may suppose that \(Y\subset \ell^\infty\) via Fréchet's embedding. Hence, the general case can also be proved by applying \cite[Lemma~2.4]{semmes-1996} to each coordinate function.
\end{proof}

We will also use some basic facts about injective metric spaces and injective hulls. A metric space \(Y\) is called  \textit{injective} if for all pairs \(A\subset B\) of metric spaces, any \(1\)-Lipschitz map \(f\colon A \to Y\) admits a \(1\)-Lipschitz extension \(\bar{f}\colon B\to Y\). For every metric space \(X\) there is an injective space \(E(X)\), called the \textit{injective hull} of \(X\), such that \(X\subset E(X)\) and any isometric embedding \(X \to Y\) into an injective space \(Y\) admits an isometric extension \(E(X) \to Y\). Hence, roughly speaking, \(E(X)\) is the smallest injective metric space containing \(X\). This important result is due to Isbell \cite{isbell-1964} and was later rediscovered by Dress \cite{dress-1984}. Injective metric spaces are always complete and if \(X\) is compact, then \(E(X)\) is compact as well, see \cite[Section 3]{lang-2013}.

The following topological result from \cite{Petersen93} will be of importance in Section~\ref{sec:lower-bound-Hausdorff}. Let $\varepsilon>0$. A map $f\colon X\to Y$ is called $\varepsilon$-continuous if there exists $\delta>0$ such that $d(f(x), f(y))<\varepsilon$ for all $x, y\in X$ with $d(x,y)<\delta$.

\begin{proposition}\label{prop:eps-cont-extension}
Let $X$, $Y$ be compact metric spaces such that $X$ has topological dimension at most $n$ and $Y$ is linearly locally contractible. Let $A$ be a (possibly empty) subset of $X$. There exists $Q\geq 1$ such that if $f\colon X\to Y$ is $\varepsilon$-continuous on $X$ with $\varepsilon<Q^{-1}\diam Y$ and $f$ is continuous at every point of $A$ then there is a continuous map $g\colon X\to Y$ which coincides with $f$ on $A$ and satisfies $d(f,g)<Q\cdot\varepsilon$.
\end{proposition}

Here, $Q$ only depends on $n$ and the linear local contractibility constant. We note a particular consequence of the proposition: if two maps $f_0, f_1\colon X\to Y$ are continuous and satisfy $d(f_0,f_1)<\varepsilon$ then they are homotopic via a homotopy $H\colon X\times[0,1]\to Y$ subject to 
$$
d(H(x,t), f_0(x))<(1+Q)\cdot\varepsilon
$$ 
for all $x\in X$ and $t\in[0,1]$.

\subsection{Orientation and degree}
We briefly recall the definitions of orientation on a topological manifold and the degree of continuous maps between oriented topological manifolds. For details we refer to \cite{dold-1980}. Let \(X\) be a topological \(n\)-manifold not necessarily compact. As mentioned already, throughout this article all manifolds are assumed to be connected and without boundary. For every \(x\in X\) the relative singular homology group $H_n(X, X\setminus x)$ taken with integer coefficients is isomorphic to \(\Z\). An \textit{orientation} of $X$ is a choice of generator $o_x\in H_n(X, X\setminus \hspace{-0.1em}x)$ for each $x\in X$ such that the following continuity property holds. For every $x\in X$ there is an open neighborhood $U\subset X$ of $x$ and $z\in H_n(X, X\setminus \hspace{-0.1em}U)$ such that the homomorphism from $H_n(X, X\setminus \hspace{-0.1em}U)$ to $H_n(X, X\setminus \hspace{-0.1em}x)$ induced by inclusion sends $z$ to $o_x$. It can be shown that if $\{o_x\}$ is an orientation of $X$ then for every compact set $K\subset X$ there is a unique homology class $o_K\in H_n(X, X\setminus \hspace{-0.1em}K)$ such that for every $x\in K$ the homomorphism $H_n(X, X\setminus \hspace{-0.1em}K)\to H_n(X, X\setminus \hspace{-0.1em}x)$ induced by inclusion maps $o_K$ to $o_x$. For compact $X$ the homology class $o_X$ is denoted $[X]$ and called the \textit{fundamental class} of $X$. Let $f\colon X\to Y$ be a continuous map between oriented topological $n$-manifolds. Let $K\subset Y$ be a non-empty compact connected set such that $f^{-1}(K)$ is compact. Then  \(f_\ast \colon H_n(X, X \setminus \hspace{-0.1em}f^{-1}(K)) \to H_n(Y, Y\setminus \hspace{-0.1em}K)\) sends \(o_{f^{-1}(K)}\) to an integer multiple of \(o_K\). This integer is denoted by \(\deg_K f\) and called the \textit{degree of \(f\) over \(K\)}. It holds that \(\deg_K (f)=\deg_{\hspace{0.1em}\{y\}}(f)\) for every \(y\in K\).  In particular, if $X$ is compact then \(\deg_K f\) is the same for all non-empty compact connected \(K\subset Y\). This number is denoted $\deg f$ and called the \textit{degree of \(f\)}.  If both \(X\) and \(Y\) are compact, then \(f_\ast([X])=\deg (f)\cdot [Y]\). Moreover, if \(X\) is compact and \(Y\) is non-compact, then \(\deg f=0\), as \(f(X)\) is a proper subset of \(Y\). We remark that the degree of \(f\) is homotopy invariant, that is, if \(g\colon X \to Y\) is a continuous map homotopic to \(f\), then \(\deg(f)=\deg(g)\).

\subsection{Nagata dimension}
A covering of a metric space \(X\) has \textit{\(s\)-multiplicity} at most \(n\) if every subset of \(X\) with diameter less than \(s\)  meets at most \(n\) members of the covering. We emphasize that we consider coverings  by arbitrary subsets and not only by open subsets. The following definition can be thought of as a quantitative version of topological dimension.

\begin{definition}
Let \(n\) be a non-negative integer and \(c\geq 0\). A metric space \(X\) satisfies \(\text{Nagata}(n,c)\) if it admits for every \(s>0\) a covering with \(s\)-multiplicity at most \(n+1\) such that \(\diam B \leq cs\) for every member \(B\) of the covering.
\end{definition}

The smallest non-negative integer \(n\) such that \(X\) satisfies \(\text{Nagata}(n,c)\) for some \(c\geq 0\) is denoted by \(\dim_N(X)\) and called the \textit{Nagata dimension} of \(X\). This definition goes back to Assouad \cite{assouad-1982}, building on earlier work of Nagata \cite{nagata-1958}. Every doubling metric space satisfies \(\text{Nagata}(N, 2)\), where \(N\) depends only on the doubling constant. See \cite[Lemma~2.3]{lang-2005}. Many other basic properties of the Nagata dimension can be found in the article \cite{lang-2005} by Lang and Schlichenmaier.

If \(A \subset X\) is of finite Nagata dimension, then \(X \setminus A\) admits some kind of Whitney covering, see \cite[Theorem 5.2]{lang-2005}. In our proof of Theorem~\ref{thm:relative-isoperimetric-inequality} in Section~\ref{sec:relative-isoperimetric-inequality}, we need the following simple lemma which states that \(X\setminus A\) also has such a covering by open balls.

\begin{lemma}\label{lem:Nagata-cover}
Let \(X\) be a metric space and \(A\subset X\) a non-empty closed subset of finite Nagata dimension. Then there exist \(a\in (0,1)\), \(b\), \(L\geq 1\) and \(F\subset X\setminus A\) such that
\[
 X\setminus A \subset \bigcup_{x\in F} B(x, b\cdot r_x),
\]
where \(r_x=d(x, A)\), and \(\{  B(x, a\cdot r_x)\}_{x\in F}\) has multiplicity at most $L$. The constants \(a\), \(b\), \(L\) depend only on the data of \(A\).
\end{lemma}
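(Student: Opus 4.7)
The plan is to deduce the lemma from the set-valued Whitney-type decomposition of $X\setminus A$ supplied by Theorem~5.2 of Lang--Schlichenmaier~\cite{lang-2005}. Since $\dim_N(A)<\infty$, that result yields a covering $\{W_i\}_{i\in I}$ of $X\setminus A$ by nonempty subsets whose diameters are comparable to their distance to $A$, and whose pointwise multiplicity is bounded by a constant $M\geq 1$; all constants depend only on the Nagata dimension and Nagata constant of $A$.

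First I would pick for each $i\in I$ a point $x_i\in W_i$ and set $F_0:=\{x_i:i\in I\}$. Writing $r_i:=d(x_i,A)$, the Whitney-type comparability forces $r_i$ to be comparable to both $\diam W_i$ and $d(W_i,A)$, and the triangle inequality gives $W_i\subset B(x_i,b\cdot r_i)$ for some constant $b\geq 1$ depending only on the Whitney constants. Consequently $X\setminus A\subset\bigcup_{i\in I}B(x_i,b\cdot r_{x_i})$, which is the desired covering property for $F_0$.

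To secure the multiplicity bound I would then refine $F_0$ by a Vitali-style greedy selection: choose $F\subset F_0$ maximal subject to the separation $d(x,x')\geq \eta\min(r_x,r_{x'})$ for distinct $x,x'\in F$, where $\eta\in(0,1)$ is a small parameter. Maximality ensures that every $x_i\in F_0$ lies within distance $\eta\cdot r_{x_i}$ of some $x\in F$, so that after slightly enlarging $b$ the covering $X\setminus A\subset\bigcup_{x\in F}B(x,b\cdot r_x)$ is preserved. Setting $a:=\eta/4$ and fixing $y\in X\setminus A$, an elementary distance argument shows that every $x\in F$ with $y\in B(x,a\cdot r_x)$ must satisfy $r_x$ comparable to $r_y$ (by a factor depending only on $a$) and must lie in a ball around $y$ of radius at most a fixed multiple of $a\cdot r_y$.

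The main obstacle is the final multiplicity count. Without any global Nagata or doubling hypothesis on $X$, one caps the number of such centers by combining the separation property of $F$ with the fact that each $x\in F$ still carries a distinct Whitney piece $W_i$ of diameter comparable to $r_x\asymp r_y$, so that distinct $F$-centers close to $y$ produce distinct Whitney pieces all meeting a fixed ball around $y$ of radius comparable to $r_y$. The pointwise multiplicity bound $M$ on $\{W_i\}$ then limits how many such pieces can cluster there, capping the number of $F$-centers by a constant $L=L(M,\eta)$ depending only on the data of $A$, as required.
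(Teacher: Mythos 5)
Your overall route (deduce the lemma from Lang--Schlichenmaier's Whitney-type covering, Theorem~5.2 of \cite{lang-2005}) is different from the paper's. The paper constructs the ball cover \emph{directly}: it takes \(\frac{1}{4}2^k\)-nets in the dyadic annuli \(\{2^k\le d(\cdot,A)<2^{k+1}\}\), pushes each net point to a nearby point of \(A\), and groups the net points according to a Nagata cover \((A_i)\) of \(A\) at scale \(5\cdot 2^k\). The multiplicity bound then falls out because each selected center \(w_j\) is associated to a \emph{distinct} member \(A_j\) of the Nagata cover, and the projected centers \(\{\rho(w_j)\}\) of any overlapping family form a set of diameter \(\le 5\cdot 2^k\), which by the Nagata \(s\)-multiplicity of \((A_i)\) meets at most \(n+1\) members. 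Your route of going through an abstract Whitney cover is a reasonable alternative.

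However, the way you justify the multiplicity bound is not adequate, and this is the crux of the lemma. You argue that centers \(x\in F\) with \(y\in B(x,a r_x)\) produce distinct Whitney pieces ``all meeting a fixed ball around \(y\) of radius comparable to \(r_y\),'' and then invoke ``the pointwise multiplicity bound \(M\) on \(\{W_i\}\).'' A pointwise multiplicity bound (each point lies in at most \(M\) pieces) does \emph{not} control how many pieces meet a ball of radius comparable to \(r_y\): the Whitney pieces have diameter comparable to \(r_y\), so arbitrarily many of them could meet such a ball in a non-doubling \(X\) without ever having a common point. The Vitali-type separation you impose on \(F\) also does not help here, again because no doubling hypothesis on \(X\) is available. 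What you actually need (and what Lang--Schlichenmaier's covering provides, if you unpack it) is a \emph{scale-adapted} multiplicity bound: any subset \(D\subset X\setminus A\) with \(\diam D\) small relative to \(d(D,A)\) meets at most a bounded number of pieces. You should then apply that bound not to a ball of radius \(\asymp r_y\), but to the set of centers \(D=\{x\in F : y\in B(x,ar_x)\}\): for small \(a\) one has \(r_x\asymp r_y\) for all \(x\in D\), hence \(\diam D\lesssim a\, r_y\) while \(d(D,A)\gtrsim r_y\), so \(\diam D/d(D,A)\) is as small as desired; since \(x\in D\cap W_i\) for the distinct pieces \(W_i\) carried by the elements of \(D\), the scale-adapted bound caps \(|D|\). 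Once you use this, the Vitali refinement becomes unnecessary (you can simply take one point per Whitney piece). As written, though, the final step conflates the pointwise bound with a ball of the wrong radius, and the argument does not close.
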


We remark that if \(A\) satisfies \(\text{Nagata}(n, c)\), then the proof of Lemma~\ref{lem:Nagata-cover} shows that one can take \(a=1/4\), \(b=10c\) and \(L=3(n+1)\). 

\begin{proof}
We abbreviate \(r=2\), \(a=1/4\) and fix \(k\in \Z\). We set
\[
R=R_k=\{ x\in X : r^k \leq d(x, A) < r^{k+1}\}
\]
and let \(W\subset R\) be an \(a r^k\)-net. Clearly, there exists a map \(\rho \colon W \to A\) such that \(d(\rho(w), w)\leq r^{k+1}\) for all \(w\in W\). Suppose that \(A\) satisfies \(\text{Nagata}(n, c)\), for some \(n\in \N\) and \(c\geq 1\), and let \((A_i)_{i\in I}\) be a \(cs\)-bounded covering of \(A\) with \(s\)-multiplicity at most \( n+1\), where \(s=5 r^k\). Then, by construction, the sets
\[
B_i=\big\{ x\in X\setminus A : \text{there is \(w\in W\) with \(d(w, x)\leq a r^k\) and \(\rho(w)\in A_i\)} \big\}
\]
cover \(R\) and \(\diam B_i \leq 2(a r^k+r^{k+1})+5 c r^k < 10 c r^k\). Let \(J\subset I\) consist of those \(i\in I\) for which \(B_i \cap W\neq \varnothing\) and select \(w_j\in B_j\cap W\) for every \(j\in J\).
It follows that \(\{ B(w_j, br_j)\}_{j\in J}\) covers \(R\), where \(b=10c\) and \(r_j=d(w_j, A)\). Moreover, any \(x\in X \setminus A\) meets at most \((n+1)\)  members of \(\{ B(w_j, ar_j)\}_{j\in J} \). Indeed, letting \(M=\big\{ \rho(w_j) : B(w_j, ar_j) \cap \{x\} \neq \varnothing\big\}\), we find that
\[
\diam M \leq 2( r^{k+1}+ar^{k+1})=5 r^k
\]
and consequently \(M\) meets at most \((n+1)\) members of \((A_i)_{i\in I}\). This implies that \(x\) meets at most \((n+1)\) sets of \(\mathcal{B}_k=\{ B(w_j, a r_j)\}_{j\in J}\). Since each ball \(B(w_j, a r_j)\) is contained in \(R_{k-1}\cup R_k \cup R_{k+1}\), it follows that any \(x\in X\setminus A\) meets at most \(3(n+1)\) members of \(\mathcal{B}=\bigcup_{k\in \Z} \mathcal{B}_k\). 
\end{proof}

\subsection{Perturbations of Lipschitz functions} Let \(X\) be a complete metric space. We let \(\Haus^n\) denote the Hausdorff \(n\)-measure on \(X\). We normalize \(\Haus^n\) such that it is equal to the Lebesgue measure \(\mathscr{L}^n\) on \(\R^n\). We say that a \(\Haus^n\)-measurable set \(E\subset X\) is \textit{\(n\)-rectifiable} if there exist compact subsets \(K_i\subset \R^n\) and bi-Lipschitz maps \(\rho_i\colon K_i \to X\) such that  the images \(\rho_i(K_i)\) are pairwise disjoint and 
\[
\Haus^n\Big(E\setminus \bigcup_{i\in \N} \rho_i(K_i)\Big)=0.
\]
On the other hand, a \(\Haus^n\)-measurable set \(S\subset X\) is called \textit{purely \(n\)-unrectifiable} if \(\Haus^n(E\cap S)=0\) for every \(n\)-rectifiable \(E\subset X\). 
For any \(A\subset X\) and \(x\in X\), we let
\[
\Theta_{\ast n}(A, x)=\liminf_{r\searrow 0} \frac{\Haus^n(A \cap B(x, r))}{\omega_n r^n} 
\]
denote the \textit{lower density} of \(A\) at \(x\). Here, \(\omega_n\) equals the Lebesgue measure of the Euclidean unit \(n\)-ball. We remark that if $\Haus^n(X)<\infty$ and $\Theta_{\ast n}(X, x)>0$ for $\Haus^n$-almost every $x\in X$ then for every $\Haus^n$-measurable subset $A\subset X$ we also have $\Theta_{\ast n}(A,x)>0$ for $\Haus^n$-almost every $a\in A$; see \cite[2.10.19(4)]{Federer-GMT-1969}. This observation will be useful in the sequel when applying the following deep result of Bate which states that a generic bounded \(1\)-Lipschitz map to \(\R^m\) maps purely \(n\)-unrectifiable sets to \(n\)-negligible sets. 

\begin{theorem}{\normalfont (Bate \cite[Theorem~1.1]{bate-2020})}\label{thm:bates-thm}
 Let \(X\) be a complete metric space and \(S\subset X\) be purely \(n\)-unrectifiable with \(\Haus^n(S)<\infty\) and \(\Theta_{\ast n}(S, x)>0\) for \(\Haus^n\)-almost every \(x\in S\). Then for every \(m\in \N\), the set of all \(f\in \Lip_{1}(X, \R^m)\) with \(\Haus^{n}(f(S))=0\) is residual. 
\end{theorem}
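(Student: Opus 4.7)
The plan is to apply the Baire category theorem. After fixing a basepoint $x_0\in X$ and restricting to pointed 1-Lipschitz maps (so that uniform convergence is well-defined), the space $\Lip_1(X,\R^m)$ is a complete metric space under the supremum metric, hence Baire. For $k\in\N$, set
\[
G_k := \bigl\{ f\in\Lip_1(X,\R^m) : \Haus^n_{\infty}(f(S)) < 1/k \bigr\},
\]
where $\Haus^n_{\infty}$ denotes the Hausdorff $n$-content. A set $A$ has $\Haus^n(A)=0$ if and only if $\Haus^n_{\infty}(A)=0$ (a cover realizing $\sum_i (\diam C_i)^n < \epsilon$ automatically has $\max_i \diam C_i < \epsilon^{1/n}$), so the target set equals $\bigcap_{k\in\N} G_k$. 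Openness of each $G_k$ follows from a standard covering argument: a uniformly small perturbation of $f$ enlarges a near-optimal cover of $f(S)$ only slightly, and the perturbation size can be tuned to preserve the content bound.

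The heart of the matter is showing each $G_k$ is dense. Given $f_0\in \Lip_1(X,\R^m)$ and $\eta>0$, we must construct $g\in G_k$ with $\|g-f_0\|_\infty < \eta$. The strategy proceeds in two stages. First, a Vitali-type covering argument based on the hypothesis $\Theta_{\ast n}(S,x)>0$ extracts a countable pairwise essentially disjoint family $\{B(x_i,r_i)\}$ of small balls (with $r_i < r_0$ for a small $r_0$ to be chosen) covering $\Haus^n$-almost all of $S$ while satisfying the packing bound $\sum_i r_i^n \lesssim \Haus^n(S)$. Second, on each piece $S_i := S \cap B(x_i, r_i)$ one constructs a small Lipschitz perturbation $\psi_i$ supported in $B(x_i, 2r_i)$ with $\|\psi_i\|_\infty < \eta$ such that $(f_0+\psi_i)(S_i)$ admits a cover of $n$-content much smaller than $r_i^n$. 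Assembling the $\psi_i$ via a Lipschitz partition of unity subordinate to a bounded-multiplicity refinement of the cover, followed by a small rescaling to restore the 1-Lipschitz condition, produces the desired $g$.

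The main obstacle is the local perturbation step, which is where the topological condition of pure unrectifiability must be converted into a quantitative shrinking statement. The natural framework is the theory of Alberti representations developed by Alberti--Cs\"ornyei--Preiss and extended to arbitrary metric measure spaces in Bate's earlier work: a positive-$\Haus^n$ piece of $X$ carrying $n$ linearly independent Alberti representations must be $n$-rectifiable. Pure $n$-unrectifiability therefore provides, at $\Haus^n$-almost every $x \in S$, a ``missing direction'' in $\R^m$ into which no measure-theoretic component of $f_0|_S$ spreads, and a generic small perturbation targeted at this direction collapses the image of a positive-measure piece into a Hausdorff-null set. Making this precise along a positive-measure piece, and then patching the perturbations together without destroying the individual gains, is the delicate core of Bate's argument; the remainder is essentially Baire bookkeeping and elementary metric measure theory.
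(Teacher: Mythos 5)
This theorem is not proved in the paper; it is imported verbatim from Bate's article \cite{bate-2020}, where it appears as Theorem~1.1, and is used here purely as a black box in the proofs of Propositions~\ref{prop:def-of-theta-i} and~\ref{prop:mapping-T-to-M}. There is therefore no internal argument for your sketch to be measured against.

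Regarded as an independent outline of Bate's proof, your sketch captures the correct top-level skeleton: the Baire category reduction to open dense sets $G_k$, the reduction of density to a local perturbation statement on small pieces of $S$ of positive lower density, and the fact that pure unrectifiability enters through the Alberti-representation characterization of rectifiability \`a la Alberti--Cs\"ornyei--Preiss. But all of the actual mathematical content --- constructing a small Lipschitz perturbation of $f_0$ on a positive-measure piece of $S$ whose image has quantitatively small Hausdorff content, and then patching these local perturbations without losing the gains --- is deferred to ``the delicate core of Bate's argument.'' That core is the theorem; the rest is bookkeeping, and Bate's proof of it occupies the bulk of his paper and requires substantial machinery beyond the informal ``missing direction'' heuristic. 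A smaller point: openness of $G_k$ does not follow from ``a uniformly small perturbation enlarges a near-optimal cover only slightly.'' If the near-optimal cover $\{C_i\}$ of $f(S)$ is infinite, thickening each piece by $\delta$ gives $\sum_i(\diam C_i+2\delta)^n=\infty$ for every $\delta>0$. One must first exhaust $S$ by a compact subset $K$ with $\Haus^n(S\setminus K)$ small (so that every $1$-Lipschitz $g$ sends $S\setminus K$ to a set of small $\Haus^n$-content), reduce to a \emph{finite} cover of $f(K)$, and only then perturb. In short: a faithful table of contents for Bate's proof, not a proof, and in any case not something this paper attempts to establish.
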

  
Here, \(\Lip_{1}(X, \R^m)\) denotes the set of all bounded \(1\)-Lipschitz functions equipped with the supremum norm. Since this is a complete metric space, it follows from the Baire category theorem that every residual subset is dense.

\subsection{Sobolev maps to metric spaces}
In the proof of Theorem~\ref{thm:existence-intcurr-dimension2-intro} about the existence of integral currents in metric surfaces we will use Sobolev maps from a Riemannian manifold (of dimension $2$) to a metric space. There are several equivalent definitions of Sobolev maps from Euclidean space or a Riemannian manifold to a complete metric space, see \cite{Ambrosio-BV-90, Korevaar-Schoen-93, Reshetnyak-97, Heinonen-Koskela-Shanmugalingam-Tyson-2015}.

Let $M$ be a closed smooth $n$-dimensional manifold. We fix a Riemannian metric $g$ on $M$. Let furthermore $(X,d)$ be a complete metric space. A measurable and essentially separably valued map $\rho\colon M\to X$ is said to belong to the Sobolev space $W^{1,p}(M,X)$ if for every $x\in X$ the function $$\rho_x(z):= d(x, \rho(z))$$ belongs to the classical Sobolev space $W^{1,p}(M)$ and there exists $h\in L^p(M)$ such that for all $x\in X$ we have $|\nabla \rho_x|\leq h$ almost everywhere. Here, $|\nabla \rho_x|$ denotes the length (with respect to the Riemannian metric) of the weak gradient of $\rho_x$.

Every $\rho\in W^{1,p}(M, X)$ has an approximate metric derivative at almost every $z\in M$, that is, there exists a seminorm $\apmd\rho_z$ on $T_zM$ satisfying 
$$
\ap\lim_{v\to 0} \frac{d(\rho(\exp_z(v)), \rho(z)) - \apmd\rho_z(v)}{|v|_g} = 0.
$$ 
Here, $\ap\lim$ denotes the approximate limit, and $\apmd\rho$ is called the \textit{approximate metric derivative}. If $\rho$ is Lipschitz then the honest limit exists and, in this case, the seminorm is denoted $\md\rho$ and called the metric derivative. If $\rho\colon A\to X$ is a Lipschitz map defined on a measurable subset of $M$, then one can still make sense of the metric derivative of $\rho$ at almost every point of $A$ by viewing $\rho$ as a map to the injective hull $E(X)$ and extending it to a Lipschitz map defined on all of $M$.

We will need two different notions of parametrized volumes for Lipschitz and Sobolev maps, the parametrized Gromov mass$*$ volume (also called Benson's definition of volume) and the parametrized Hausdorff volume. In order to define these we introduce two Jacobians of a seminorm $s$ on $\R^n$. If $s$ defines a norm on $\R^n$ then we set $\mathbf{J}(s)=\Haus^n_{(\R^n, s)}([0,1]^n)$ and $\mathbf{J}^*(s) = 2^n/\mathscr{L}^n(P)$, where $\mathscr{L}^n(P)$ is the Lebesgue measure of the parallelepiped of smallest volume containing the unit ball with respect to $s$. If $s$ is degenerate, then we set $\mathbf{J}(s)=\mathbf{J}^*(s) =0$. Notice that $\mathbf{J}(s)$ and $\mathbf{J}^*(s)$ are comparable up to a constant only depending on $n$. The (parametrized) Hausdorff volume of a map $\rho\in W^{1,n}(M, X)$ is 
$$
\vol(\rho):= \int_M\mathbf{J}(\apmd\rho)\,d\hspace{-0.14em}\Haus^n.
$$ 
The (parametrized) Gromov mass$*$ volume $\vol^*(\rho)$ is defined analogously using $\mathbf{J}^*$ instead of $\mathbf{J}$. We define the volume of $\rho$ restricted to a measurable subset of $M$ analogously. For other natural definitions of volumes considered in Finsler geometry we refer to the excellent survey article \cite{paiva-2004}.

%%%%%%%%%%%%%%%%%%%%%%%%%%%%%%%%%%%%%%%%%%%%%%%%%%%%%%%%%%%%%%%%%%%%%%%%%%%%%%%%%%%%%%%%%%%%%%%%%%%%%%%%%%%%%%%%%%%%%%%%%%%%%%%%%%%%%%%%%%%%%%%%%%%%%%%%%%%%%%%%%%%%%%%%%%%%%%%%%%%%%%%%%%%%%%%%%%%%%%%%%%%%%%%%%%%%%%%%%%%%%%%%%%%%%%%%%%%%%%%%%%%%%%%%%%%%%%%%%%%%%%%%%%%%%%%%%

\section{Background on currents}\label{sec:currents}

In this section we review some basic aspects of the theory of currents in metric spaces that are needed for the proofs of our main results. For details we refer to \cite{ambrosio-kirchheim-2000} or \cite{lang-currents-2011}. Throughout this section, let \(X\) be a complete metric space. 

\subsection{Metric currents} For any \(k\geq 0\) we let \(\mathcal{D}^k(X)\) denote the set consisting of all tuples \((f, \pi_1, \ldots, \pi_k)\) of Lipschitz functions \(f\colon X\to \R\) and \(\pi_i\colon X\to \R\), for \(i=1, \ldots, k\),  with \(f\) bounded. 

\begin{definition}\label{def:metric-current}
A multilinear map \(T\colon \mathcal{D}^k(X)\to \R\) is called metric \(k\)-current (of finite mass) if the following holds:
\begin{enumerate}
    \item(continuity) If \(\pi_i^{j}\) converges pointwise to \(\pi_i\) for every \(i=1, \ldots, k\), and \(\Lip \pi_i^{j}< C\) for some uniform constant \(C>0\), then 
    \[
    T(f, \pi_1^{j}, \ldots, \pi_k^{j})\to T(f, \pi_1, \ldots, \pi_k)
    \] as \(j\to \infty\);
    \item(locality) \(T(f, \pi_1, \ldots, \pi_k)=0\) if for some \(i\in\{1, \ldots, k\}\) the function \(\pi_i\) is constant on \(\{ x\in X : f(x) \neq 0\}\);
    \item(finite mass) There exists a finite Borel measure \(\mu\) on \(X\) such that
    \begin{equation}\label{eq:mass-inequality}
    \abs{T(f, \pi_1, \ldots, \pi_k)} \leq \prod_{i=1}^k \Lip \pi_i \int_{X} \abs{f(x)} \, d\mu(x)
    \end{equation}
    for all \((f, \pi_1, \ldots, \pi_k)\in \mathcal{D}^k(X)\).
    \end{enumerate}
\end{definition}
We call the minimal measure \(\mu\) satisfying \eqref{eq:mass-inequality} the mass measure of \(T\) and denote it by \(\norm{T}\). The support of \(T\) is defined as
\[
\spt T=\big\{ x\in X : \norm{T}(B(x, r)) >0 \text{ for all } r>0\big\}.
\] 
Notice that \(\norm{T}\) is concentrated on \(\spt T\), that is, \(\norm{T}(B)=\norm{T}(B\cap \spt T)\) for any Borel subset \(B\subset X\). We denote by \(\mass_k(X)\) the vector space of \(k\)-currents on \(X\). We set \(\mass(T)=\norm{T}(X)\) and call it the mass of \(T\). One can show that \(\mass_k(X)\) equipped with the mass norm is a Banach space. We remark that one always has \(\mass_k(X)=\{0\}\) if \(k> \dim_N(X)\); see \cite[Proposition
2.5]{zuest-2011} for more details. 

In the following, we recall the definitions of the restriction, push-forward, and boundary operators on \(\mass_k(X)\).   
Since bounded Lipschitz functions are dense in \(L^1(X, \norm{T})\), it follows that \(T\) admits a unique extension to a functional on tuples \((h, \pi_1, \ldots, \pi_k)\) with \(h\in L^1(X, \norm{T})\).  In particular, for any Borel set \(B\subset X\), the restriction \(T\on B \colon \mathcal{D}^k(X)\to \R\) defined by
\[
(T\on B)(f, \pi_1\ldots, \pi_k)=T(\mathbbm{1}_B \cdot f, \pi_1, \ldots, \pi_k)
\]
is well-defined and it is not difficult to show that it is a \(k\)-current. One has that \(\norm{T\on B}=\norm{T}\on B\). 

Let $Y$ be another complete metric space and $\varphi\colon X\to Y$ a Lipschitz map. The push-forward of $T\in\mass_k(X)$ under $\varphi$ is the element $\varphi_\#T\in\mass_k(Y)$ given by $$\varphi_\# T(f, \pi_1, \ldots, \pi_k)=T(f \circ \varphi, \, \pi_1 \circ \varphi, \ldots, \, \pi_k \circ \varphi)$$
for all \((f, \pi_1, \ldots, \pi_k)\in \mathcal{D}^k(Y)\). It is not difficult to show that \((\varphi_\# T)\on B=\varphi_\#(T\on \varphi^{-1}(B))\) for any Borel set \(B\subset Y\); moreover, $\spt(\varphi_\#T)$ is contained in the closure of $\varphi(\spt T)$ and \(\mass(\varphi_\# T)\leq (\Lip \varphi)^k \mass(T)\). Finally, it is easy to show that one can make sense of the push-forward also when $\varphi$ is only defined on $\spt T$ by using arbitrary Lipschitz extensions of $f\circ\varphi$ and $\pi\circ\varphi$.

For \(k\geq 1\) the boundary of \(T\in \mass_k(X)\) is defined by
\[
(\partial T)(f, \pi_1, \ldots, \pi_{k-1})=T(1, f, \pi_1\ldots, \pi_{k-1})
\]
for all \((f, \pi_1, \ldots, \pi_{k-1})\in \mathcal{D}^{k-1}(X)\). This defines a \((k-1)\)-multilinear functional that satisfies the continuity and locality axioms in Definition~\ref{def:metric-current}.
If \(\partial T\in \mass_{k-1}(X)\), then \(T\) is called a \textit{normal} current. We abbreviate \(\bN_0(X)=\mass_0(X)\) and for \(k\geq 1\) we let \(\bN_k(X)\) denote the set of all normal \(k\)-currents on \(X\). These vector spaces become Banach spaces when they are equipped with the norm \(\bN(T)=\mass(T)+\mass(\partial T)\) with the convention that \(\bN(T)=\mass(T)\) if \(T\in \bN_{0}(X)\). 

\subsection{Integer rectifiable and integral currents} In what follows, we introduce integral currents, which are the main class of currents considered in this article. Every \(\theta\in L^1(\R^k)\) induces a \(k\)-current \(\bb{\theta}\) on \(\R^k\) defined by
\[
\bb{\theta}(f, \pi_1, \ldots, \pi_k)=\int_{\R^k} \theta f \det\big( D\pi\big)\, d\mathscr{L}^k
\]
for all \((f,\pi)=(f, \pi_1, \ldots, \pi_k)\in \mathcal{D}^k(\R^k)\). If \(\theta\) has bounded variation then \(\bb{\theta}\in \bN_k(\R^k)\). These currents can be used as building blocks for integer rectifiable and integral currents.
 
\begin{definition}
A current $T\in\bM_k(X)$ is said to be an \textit{integer rectifiable} current if there are compact subsets \(K_i\subset \R^k\), functions \(\theta_i\in L^1(\R^k, \Z)\) with \(\spt \theta_i \subset K_i\), and bi-Lipschitz maps \(\rho_i \colon K_i \to X\) such that
\[
T=\sum_{i\in \N} \rho_{i\#}\bb{\theta_i} \quad \text{ and } \quad \mass(T)=\sum_{i\in \N} \mass(\rho_{i\#}\bb{\theta_i}).
\]
Integer rectifiable  normal currents are called \textit{integral} currents.  The set of all integral \(k\)-currents in \(X\) is denoted by \(\bI_k(X)\). 
\end{definition}
 For \(T\in \bI_k(X)\), it follows that \(\norm{T}\) is concentrated on the \(k\)-rectifiable set
\begin{equation}\label{eq:characteristic-set}
\set T= \Big\{ x\in X : \Theta_{\ast k}(\norm{T}, x) >0 \Big\},
\end{equation}
which is called the \textit{charactersistic set} of \(T\). If \(\varphi\colon X\to Y\) is a Lipschitz map, then \(\varphi_\# T\in \bI_k(Y)\) and \(\partial(\varphi_\# T)=\varphi_{\#}(\partial T)\) whenever \(T\in \bI_k(X)\). In particular, any Lipschitz \(k\)-chain induces an integral \(k\)-current. We remark that \(\bI_k(X)\) is an additive abelian subgroup of \(\bN_k(X)\). Hence, the boundary-rectifiability theorem of Ambrosio--Kirchheim implies that 
\[
\dotsm \overset{\partial_{k+1}}{\longrightarrow} \bI_k(X) \overset{\partial_{k}}{\longrightarrow} \bI_{k-1}(X) \overset{\partial_{k-1}}{\longrightarrow} \dotsm \overset{\partial_1}{\longrightarrow} \bI_0(X) 
\]
is a chain complex. We let \(H_k^{\IC}(X)= \ker \partial_{k}/ \,\text{im}\, \partial_{k+1}\), for \(k\geq 1\), denote the \(k\)-th homology group of this chain complex. We remark that if \(\Haus^{k+1}(X)=0\), then \(\bI_{k+1}(X)=\{0\}\) and thus \(H_k^{\IC}(X)\) is equal to the abelian group of all \(T\in \bI_k(X)\) with \(\partial T=0\). 

Any closed oriented Riemannian \(n\)-manifold \(M\) carries a non-trivial \(n\)-cycle in a natural way. As in the smooth case, one can integrate Lipschitz differential forms \(f d\pi_1 \wedge \dotsm \wedge d\pi_n\) on \(M\). We set
\[
\bb{M}(f,\pi) = \int_Mf\det(D\pi)\,d\hspace{-0.14em}\Haus^n
\]
for all \((f, \pi)\in \mathcal{D}^n(M)\). By virtue of Stokes theorem, this defines an integral \(n\)-cycle. Since \(M\) admits a bi-Lipschitz triangulation, it is a straightforward consequence of the constancy theorem, see e.g.~\cite[Corollary 3.13]{federer-1960}, that \(H_n^{\IC}(M)\) is infinite cyclic and \(\bb{M}\) is a generator of \(H_n^{\IC}(M)\).  

An important tool which we will frequently use is the following slicing inequality. Let \(\rho \colon X \to \R\) be a Lipschitz function. Then for any \(k\in\N\) and \(T\in \bI_k(X)\), 
\[
\langle T, \rho, r \rangle= \partial( T\on \{ \rho < r\})-(\partial T)\on \{ \rho < r\}
\]
is an integral \((k-1)\)-current in \(X\) for almost every \(r\in \R\). It can be shown that \(\spt \langle T, \rho, r \rangle \subset \spt T \cap \rho^{-1}(r)\), which in general is a strict inclusion. Moreover, 
\[
\int_{\R} \norm{\langle T, \rho, r \rangle}(A) \, dr \leq (\Lip \rho) \cdot \norm{T}\big(A\big)
\]
for all Borel subsets \(A\subset X\). This inequality is sometimes referred to as the “slicing inequality".

\subsection{Minimal fillings in injective metric spaces}  
Let \(T\in \bI_{k}(X)\) be a cycle. Recall that this means \(\partial T=0\) if \(k\geq 1\) and \(T(1)=0\) if \(k=0\). We say that \(S\in \bI_{k+1}(X)\) is a \textit{minimal filling} of \(T\) if \(\partial S=T\) and \(\mass(S)\leq \mass(S')\) for every \(S'\in \bI_{k+1}(X)\) with \(\partial S'=T\). In the following we consider minimal fillings in injective metric spaces. As in Euclidean space it can be shown that minimal fillings exist and that their masses satisfy an isoperimetric inequality and a lower density bound.

\begin{theorem}\label{thm:filling-inj-spaces}
Let \(k\) be a non-negative integer. Then there is a constant \(D=D_k\geq 1\) such that the following holds. If \(Y\) is an injective metric space and \(T\in \bI_{k}(Y)\) a cycle, then there exists a minimal filling \(S\in \bI_{k+1}(Y)\) of \(T\) and any such filling satisfies
\begin{equation}\label{eq:filling-estis}
\mass(S) \leq D \mass(T)^{\frac{k+1}{k}} \quad \text{ and } \quad \norm{S}(B(y, r))\geq D^{-k}r^{k+1}
\end{equation}
for all \(y\in \spt S\) and all non-negative \(r\) with \( d(y, \spt T) \geq r\). In particular, \(\spt S\) is contained in the \(R\)-neighborhood of \(\spt T\) for \(R=D\mass(S)^{\frac{1}{k+1}}\).
\end{theorem}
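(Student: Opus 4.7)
Plan. The statement has three ingredients: (a) a Euclidean-type isoperimetric inequality in $Y$, (b) existence of a minimizer, and (c) the lower density bound; the support inclusion then follows formally.

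For (a), fix an isometric embedding $Y\hookrightarrow \ell^{\infty}$ and, using that $Y$ is injective, choose a $1$-Lipschitz retraction $\pi\colon \ell^{\infty}\to Y$. Wenger's Euclidean isoperimetric inequality for integral currents in Banach spaces provides a filling $\tilde S\in\bI_{k+1}(\ell^{\infty})$ of (the image of) $T$ with $\mass(\tilde S)\leq C\mass(T)^{(k+1)/k}$, where $C=C_k$. Since $T$ is supported in $Y$ and $\pi$ restricts to the identity on $Y$, the push-forward $S_0=\pi_{\#}\tilde S$ is a filling in $Y$ of the same mass, which gives the first estimate in \eqref{eq:filling-estis}.

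For (b), let $(S_n)$ be an infimizing sequence of fillings. Applying (a) to the slices $\langle S_n,d(\cdot,\spt T),s\rangle$ allows one to truncate each $S_n$ at a suitable level $s$ and refill the slice, producing fillings supported in $N_s(\spt T)$ with mass arbitrarily close to the infimum. Together with $\sigma$-compactness of supports of integer rectifiable currents, this yields an infimizing sequence whose supports lie in a common compact set; Ambrosio--Kirchheim's compactness theorem extracts a weakly convergent subsequence, and lower semicontinuity of mass delivers a minimizer $S$.

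For (c), fix $y\in\spt S$ and set $\rho=d(\cdot,y)$, so $\Lip\rho=1$. For $s\in(0,d(y,\spt T))$ one has $T\on B(y,s)=0$ since $B(y,s)\cap\spt T=\varnothing$, hence the slicing identity reduces to $\langle S,\rho,s\rangle=\partial(S\on B(y,s))$, which is an integral $k$-cycle. By (a) it admits a filling $F_s$ with $\mass(F_s)\leq C\mass(\langle S,\rho,s\rangle)^{(k+1)/k}$. The current $(S\on\{\rho\geq s\})+F_s$ fills $T$, so minimality of $S$ gives
\[
g(s):=\norm{S}(B(y,s))\leq\mass(F_s)\leq C\mass(\langle S,\rho,s\rangle)^{(k+1)/k}.
\]
The slicing inequality gives $g'(s)\geq\mass(\langle S,\rho,s\rangle)$ for a.e. $s$, hence $g'(s)\geq C^{-k/(k+1)}g(s)^{k/(k+1)}$. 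Integrating this differential inequality from $0$ to $r$ yields $g(r)\geq c_k r^{k+1}$ with $c_k>0$ depending only on $k$, proving the second estimate of \eqref{eq:filling-estis} after absorbing constants into $D$. Finally, the inclusion $\spt S\subset N_R(\spt T)$ for $R=D\mass(S)^{1/(k+1)}$ follows immediately: if $y\in\spt S$ satisfied $d(y,\spt T)\geq R$, then $D^{-k}R^{k+1}\leq\norm{S}(B(y,R))\leq\mass(S)$, contradicting the choice of $R$ once $D$ is taken large enough to absorb the constants from (a) and (c). The main obstacle is step (b): an injective metric space is in general not proper, so a truncation argument via (a) is essential to confine the supports of the infimizing sequence to a common compact set where Ambrosio--Kirchheim compactness can be applied.
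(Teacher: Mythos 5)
Your approach closely tracks the paper's: step (a) is precisely the paper's argument (embed $Y$ isometrically into a Banach space, use the $1$-Lipschitz retraction onto $Y$, and invoke Wenger's Euclidean isoperimetric inequality, \cite[Corollary~1.3]{wenger-2005}); step (c) is the standard Gronwall-type argument that the paper attributes to \cite[Theorem~10.6]{ambrosio-kirchheim-2000} and \cite[Lemma~3.4]{wenger-2005}; and the support inclusion is derived formally exactly as you do. However, there are two genuine gaps.

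First, the case $k=0$ is not covered by your argument. The statement is carefully worded so that the isoperimetric bound $\mass(S)\leq D\mass(T)^{(k+1)/k}$ is ``an empty statement'' when $k=0$, and there simply is no Euclidean isoperimetric inequality for fillings of $0$-cycles. Consequently your differential inequality $g(s)\leq C\,\mass(\langle S,\rho,s\rangle)^{(k+1)/k}$ is meaningless when $k=0$, and the Gronwall argument collapses. The paper handles $k=0$ separately, using the decomposition theorem of Bonicatto--Del Nin--Pasqualetto \cite{bonicatto-del-nin-2022} to write the minimal filling as a sum of injective Lipschitz curves and observing that minimality forces these to be geodesics, which directly yields the lower density bound.

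Second, step (b) as sketched does not actually prove existence. Truncating an infimizing sequence so that the supports lie in $N_s(\spt T)$ does not produce supports contained in a common compact set: in a non-proper injective metric space (e.g.\ $\ell^{\infty}$ or $E(X)$ for non-compact $X$), a bounded neighborhood of a compact set is not precompact, and $\sigma$-compactness of each individual support is of no help. So Ambrosio--Kirchheim compactness cannot be invoked as stated. The paper sidesteps this by simply citing \cite[Theorem~1.3]{wenger-2014}, which proves existence of minimal fillings precisely in this locally non-compact setting (via a Gromov-style compactness/ultralimit argument, not by confining supports to a compact set). You correctly flag this as the main obstacle, but the remedy you propose does not overcome it, and making it rigorous essentially requires reproving Wenger's theorem.

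Apart from these two points, the argument is sound and takes essentially the same route as the paper for steps (a), (c), and the support inclusion.
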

If \(k=0\) then the first inequality in \eqref{eq:filling-estis} is understood as an empty statement. 

\begin{proof}
Let \(Y\) be an injective metric space and \(T\in \bI_k(Y)\) a cycle. By \cite[Theorem~1.3]{wenger-2014}, \(T\) admits a filling \(S\in \bI_{k+1}(Y)\) such that \(\mass(S)\leq \mass(S')\) for every filling \(S'\in \bI_{k+1}(Y)\) of \(T\). First, we suppose that \(k=0\). By \cite{bonicatto-del-nin-2022}, we know that
\[
S=\sum_{i \in \N} S_i \quad \text{ and } \quad \bN(S)=\sum_{i\in \N} \bN(S_i)
\]
where \(S_i=\gamma_{\#}^i\big(\bb{\mathbbm{1}_{[0,1]}}\big)\) with \(\gamma^i\colon [0,1]\to Y\) an injective Lipschitz curve or injective Lipschitz loop. Since \(S\) is a minimal filling of \(T\), it follows that no Lipschitz loops with positive mass can appear in the decomposition. Moreover, every injective Lipschitz curve must be a geodesic. Hence, \eqref{eq:filling-estis} follows. 

In the following,  we consider the case when \(k\geq 1\). A standard argument (see \cite[Theorem~10.6]{ambrosio-kirchheim-2000} or \cite[Lemma~3.4]{wenger-2005}) shows that if \(Y\) admits an isoperimetric inequality of
Euclidean type for \(\bI_k(Y)\) with constant \(C\), then any minimal filling \(S\) satisfies \(\norm{S}(B(y, r))\geq D^{-k}r^{k+1}\) for all \(y\in \spt S\) and all \(r\in \big[0,\,d(y, \spt T)\big]\), where \(D\) depends only on \(C\) and \(k\). Thus, it remains to show that there is such a \(C>0\) which is independent of \(Y\).  But this follows directly from \cite[Corollary~1.3]{wenger-2005}, since we may suppose that \(Y\subset E\) for some Banach space \(E\) and  that there is a \(1\)-Lipschitz retraction \(E\to Y\).
\end{proof}

%%%%%%%%%%%%%%%%%%%%%%%%%%%%%%%%%%%%%%%%%%%%%%%%%%%%%%%%%%%%%%%%%%%%%%%%%%%%%%%%%%%%%%%%%%%%%%%%%%%%%%%%%%%%%%%%%%%%%%%%%%%%%%%%%%%%%%%%%%%%%%%%%%%%%%%%%%%%%%%%%%%%%%%%%%%%%%%%%%%%%%%%%%%%%%%%%%%%%%%%%%%%%%%%%%%%%%%%%%%%%%%%%%%%%%%%%%%%%%%%%%%%%%%%%%%%%%%%%%%%%%%%%%%%%%%%%

\section{Lower bound on the Hausdorff measure}\label{sec:lower-bound-Hausdorff}

The aim of this short section is to prove the following result which generalizes \cite[Corollary 1.4]{Kinneberg-2018} to the setting of metric spaces which are not necessarily doubling and which will allow us to apply Theorem~\ref{thm:bates-thm} in the proof of Theorem~\ref{thm:main}.

\begin{theorem}\label{thm:lower-bound-Hausdorff-measure}
    Let $X$ be a metric space which has finite Hausdorff $n$-measure and is homeomorphic to a closed topological $n$-manifold. If $X$ is linearly locally contractible then there exists $c>0$ such that $$\Haus^n(B(x,r))\geq c\cdot r^n$$ for every $x\in X$ and every $0\leq r\leq \diam X$. The constant $c$ only depends on $n$ and on the linear local contractibility constant.
\end{theorem}

The overall strategy of proof is similar to that of \cite{Kinneberg-2018}. Our main new ingredient is the lemma below whose proof relies on the recent work \cite{LLN-22}. In what follows, dimension refers to the topological dimension.

\begin{lemma}\label{lem:factor-through-low-dim-simpl}
 Given a compact and linearly locally contractible metric space $X$ of dimension $n\geq 2$ there exists $c>0$ only depending on the data of $X$ such that for every $0<r\leq\diam X$ the following holds. Let $S\subset X$ be a closed subset satisfying $$\Haus^{n-1}(S)<c\cdot r^{n-1}$$ and let $\iota_S\colon S\hookrightarrow X$ be the inclusion map. Then there exist a metric space $Z$ of dimension at most $n-2$ and continuous maps $f\colon S\to Z$ and $g\colon Z\to X$ such that $\iota_S$ is homotopic to $g\circ f$ via a homotopy whose image lies in the $r$-neighborhood of $S$.
\end{lemma}

\begin{proof}
Let $c>0$ be sufficiently small, to be determined later, and let $0<r\leq \diam X$. View $X$ as a subset of the Banach space $l^\infty$ of bounded sequences equipped with the sup norm, which we denote by $\| \cdot \|$. By \cite[Theorem~3.1]{LLN-22}, see also \cite[Theorem~1.5]{avvakumov2023boxing}, there exists a finite $(n-2)$-dimensional simplicial complex $K \subset l^\infty$ and a Lipschitz map $f\colon S \to K$ such that
\begin{equation}\label{eq:homotopy-f}
        \|x - f(x)\| \leq c_1 \cdot \Haus^{n-1}(S)^\frac{1}{n-1}< c_1 c^{\frac{1}{n-1}}r
\end{equation}
for all $x \in S$, where $c_1$ is a constant only depending on $n$. In particular, the compact set $Z:=f(S)$ lies in the closed $c'r$-neighborhood of $S$, where $c'= c_1 c^{\frac{1}{n-1}}$. 
    
From now on, we assume that $c>0$ is so small that $16Q^2c'<1$, where $Q\geq 1$ is as in Proposition~\ref{prop:eps-cont-extension}. Let $g_0 \colon Z \to X$ be a (possibly discontinuous) map satisfying $\|g_0(z)-z\| \leq 2 d(z,X)$ for every $z \in Z$. Since $g_0$ is $5c'r$-continuous and $5c'<Q^{-1}$ it follows from Proposition~\ref{prop:eps-cont-extension} that there exists a continuous map $g\colon Z\to X$ with $d(g,g_0)<5Qc'r$. We have 
$$
d(x, g(f(x)))\leq \|x-f(x)\| + \|f(x)-g_0(f(x))\| + \|g(f(x))-g_0(f(x))\|
$$ 
for every $x\in S$ and hence $d(\iota_S, g\circ f)< (3+5Q)c'r$. By the choice of $c$ and the remark after Proposition~\ref{prop:eps-cont-extension} we see that $\iota_S$ and $g\circ f$ are homotopic via a homotopy with image in the $r$-neighborhood of $S$. This completes the proof.
\end{proof}

We can now provide the proof of the main result of this section.

\begin{proof}[Proof of Theorem~\ref{thm:lower-bound-Hausdorff-measure}] The proof when $n=1$ is easy and is left to the reader. We therefore assume that $n\geq 2$. Let $x\in X$ and for $r>0$ define $S_r:= \{x'\in X: d(x, x')=r\}$. Let $c>0$ be as in Lemma~\ref{lem:factor-through-low-dim-simpl}. By the coarea inequality it suffices to show that 
$$
\Haus^{n-1}(S_r)\geq 4^{-(n-1)}\cdot c \cdot r^{n-1}
$$ 
for almost every $r\in (0,8^{-1}\diam X)$. We argue by contradiction and assume that there exists $r\in(0,8^{-1}\diam X)$ such that the set $S:=S_r$ satisfies $\Haus^{n-1}(S)<4^{-(n-1)}c r^{n-1}$. By Lemma~\ref{lem:factor-through-low-dim-simpl} there exist a metric space $Z$ of topological dimension at most $n-2$ and continuous maps $f\colon S\to Z$ and $g\colon Z\to X$ such that the inclusion $S\hookrightarrow X$ is homotopic to $g\circ f$ via a homotopy whose image lies in the $\frac{r}{4}$-neighborhood of $S$. In the following, we abbreviate \(h=g\circ f\). Consider the following commutative diagram in \v{C}ech cohomology:
 
\begin{center}
\begin{tikzcd}
\check{H}^{n-1}(h(S))  \arrow{r}{h^\ast} \arrow{d}{g^\ast} &\check{H}^{n-1}(S)  \\
\check{H}^{n-1}(Z) \arrow[ur, swap, "f^\ast"] 
\end{tikzcd}
\end{center}
 
 The space $Z$ has topological dimension at most $n-2$ and hence $\check{H}^{n-1}(Z)$ vanishes, in particular, the induced homomorphism $h^*$ is the zero map. However, by fixing $y\in X$ with $d(x,y)\geq 2r$, we find that $S$ separates $x$ and $y$, and by the above the inclusion \(S\hookrightarrow X\) is homotopic to \(h\) through maps whose images do not meet \(\{x, y\}\).  Therefore, \cite[Lemma~2.1]{Kinneberg-2018} implies that $h^*$  is non-trivial, which is a contradiction.
\end{proof}

\section{Proof of Theorem~\ref{thm:main}}\label{sec:main-result}
Let $X$ be a metric space as in Theorem~\ref{thm:main} and let \(X=E\cup S\) be a partition where \(E\) is \(n\)-rectifiable and \(S\) purely \(n\)-unrectifiable. Theorem~\ref{thm:lower-bound-Hausdorff-measure} and the remarks preceding Theorem~\ref{thm:bates-thm} imply that  \(\Theta_{\ast n}(S, x)>0 \)
for \(\Haus^n\)-almost every \(x\in S\). As \(E\) is \(n\)-rectifiable there exists a countable collection of bi-Lipschitz maps \(\rho_i \colon K_i \to X\) with \(K_i\subset \R^n\) compact such that the images $\rho_i(K_i)$ are pairwise disjoint and 
\[
\Haus^n\Big(E \setminus \bigcup_{i\in \N} \rho_i(K_i)\Big)=0.
\]
We remark that we do not exclude the possibility that \(K_i=\varnothing\). In fact, \textit{a priori} it could even be that \(E=\varnothing\). The proof of Proposition~\ref{prop:mapping-T-to-M} will however show that \(\Haus^n(E)>0\). We begin with the following simple observation. 

\begin{lemma}
For each $i\in\N$ let $\theta_i\colon K_i\to\R$ be a measurable function satisfying $\abs{\theta_i}=1$ almost everywhere. Then 
\begin{equation}\label{eq:def-integral-current-main-Thm}
T=\sum_{i\in \N} \rho_{i\#} \bb{\theta_i}
\end{equation}
defines an integer rectifiable $n$-current in $X$ with $\norm{T}\leq n^{\frac{n}{2}}\cdot \Haus^n$.
\end{lemma}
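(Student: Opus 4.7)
The plan is to verify the three claimed properties separately for each summand $T_i := \rho_{i\#}\bb{\theta_i}$, and then to pass to the sum using the pairwise disjointness of the images $\rho_i(K_i)$.

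First, I would check that each $T_i$ fits into the Ambrosio--Kirchheim definition of an integer rectifiable current recalled in the previous section. Since $\theta_i$ is measurable with $\abs{\theta_i}=1$ almost everywhere on the compact set $K_i$, extending it by zero produces a function in $L^1(\R^n, \Z)$ with $\spt \theta_i \subset K_i$; hence $\bb{\theta_i}$ is one of the prototypes of integer rectifiable currents, and $T_i = \rho_{i\#}\bb{\theta_i}$ inherits this form via the bi-Lipschitz map $\rho_i$.

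Next comes the pointwise mass bound
\[
\norm{T_i} \leq n^{n/2}\,\Haus^n \mres \rho_i(K_i),
\]
which is the only non-routine ingredient. I would obtain it by representing both measures through the metric-space area formula. On the one hand, $\norm{T_i}$ is the pushforward under $\rho_i$ of $\abs{\theta_i}$ times the Ambrosio--Kirchheim Jacobian of the metric differential $md_x\rho_i$, integrated against $\mathscr{L}^n$. On the other hand, $\Haus^n \mres \rho_i(K_i)$ is the pushforward of the corresponding area factor $d\mathscr{L}^n$. Since $\abs{\theta_i}=1$ almost everywhere, the claim reduces to the pointwise seminorm inequality asserting that the Jacobian of any seminorm on $\R^n$ is at most $n^{n/2}$ times its area factor, a standard consequence of John's ellipsoid theorem controlling the Banach--Mazur distance from an arbitrary $n$-dimensional seminorm to the Euclidean one.

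Finally, the pairwise disjointness of the images $\rho_i(K_i)$ combined with the uniform bound above gives
\[
\sum_{i \in \N} \mass(T_i) \leq n^{n/2} \sum_{i \in \N} \Haus^n\bigl(\rho_i(K_i)\bigr) \leq n^{n/2}\,\Haus^n(X) < \infty,
\]
so the series $\sum_i T_i$ converges absolutely in the mass norm of the Banach space $\mass_n(X)$. The disjointness of supports then upgrades mass subadditivity to the equality $\mass(T) = \sum_i \mass(T_i)$ demanded by the definition of integer rectifiable currents recalled earlier, and simultaneously yields $\norm{T} \leq \sum_i \norm{T_i} \leq n^{n/2}\,\Haus^n$. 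I expect the only genuine obstacle to be the pointwise Jacobian-versus-area-factor comparison with the specific constant $n^{n/2}$; once that is granted, the remainder of the argument is bookkeeping built on the disjointness hypothesis and the finite total Hausdorff measure of $X$.
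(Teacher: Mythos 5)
Your proposal is correct and reaches the same conclusion, but the derivation of the key pointwise bound $\norm{T_i}\leq n^{n/2}\,\Haus^n\mres\rho_i(K_i)$ runs along a genuinely different route from the paper's. You invoke Kirchheim's metric differentiability and the Ambrosio--Kirchheim area formula, reducing matters to the comparison between the ``current-mass Jacobian'' of the metric differential $md_x\rho_i$ and its Hausdorff area factor, which you then control by John's ellipsoid. The paper instead performs a bare-hands computation: it writes $T_i(f,\pi)=\int_{K_i}\theta_i\,(f\circ\rho_i)\det(D\psi)\,d\mathscr{L}^n$ for a Lipschitz extension $\psi$ of $\pi\circ\rho_i$, bounds $|\det(D\psi)|$ via the Euclidean change-of-variables formula by a fiber integral over the level sets of $\pi\circ\rho_i$, and then applies the Eilenberg coarea inequality to obtain $|T_i(f,\pi)|\leq\Lip(\pi)^n\int_{\rho_i(K_i)}|f|\,d\Haus^n$. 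The constant $n^{n/2}$ then falls out immediately because a map $\pi\colon X\to\R^n$ with each coordinate $1$-Lipschitz is $\sqrt{n}$-Lipschitz in the Euclidean norm. Both approaches carry the same geometric content (John's theorem is lurking behind the factor $\sqrt{n}$ in either case), but the paper's argument is more elementary and self-contained, avoiding the metric-differentiation and area-formula machinery that you import from \cite{ambrosio-kirchheim-2000}; your version, on the other hand, makes the geometric source of the dimensional constant more transparent. The bookkeeping parts of your argument---disjointness of the $\rho_i(K_i)$ forcing mass additivity, $\Haus^n(X)<\infty$ giving convergence in the mass norm, and the identification of the limit as an integer rectifiable current---agree with the paper's.
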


\begin{proof}
Setting $T_i= \rho_{i\#} \bb{\theta_i}$ and $T^k = T_1+\dots+T_k$ and noticing that the sets $\rho_i(K_i)$ are pairwise disjoint we clearly have $$\norm{T^k} = \norm{T_1}+\dots+\norm{T_k}$$ for every $k\in\N$. Moreover, for any $(f,\pi)\in \mathcal{D}^n(X)$ and any Lipschitz extension $\psi$ of $\pi\circ\rho_i$,
\[
|T_i(f,\pi)|\leq \int_{K_i} |f\circ\rho_i|\cdot |\det(D\psi)|\,d\mathscr{L}^n = \int_{\pi(\rho_i(K_i))}\left(\int_{(\pi\circ\rho_i)^{-1}(y)}|f\circ\rho_i|\,d\Haus^0\right)\,d\mathscr{L}^n(y), 
\]
where the equality is due to the change of variables formula. Hence, using the coarea inequality, we find that
\[
|T_i(f,\pi)| \leq \Lip(\pi)^n\int_{\rho_i(K_i)}|f|\,d\hspace{-0.14em}\Haus^n,
\]
from which it follows that $\norm{T_i}\leq n^{\frac{n}{2}}\Haus\on\rho_i(K_i)$. Since $\Haus^n(X)<\infty$ we conclude that $T^k$ converges in mass and the limit $T$ satisfies $$\norm{T} = \sum_{i=1}^\infty\norm{T_i}\leq n^{\frac{n}{2}}\cdot \Haus^n.$$ This shows that $T$ is integer rectifiable and satisfies $\norm{T}\leq n^{\frac{n}{2}}\cdot \Haus^n$. 
\end{proof}

We now want to show that by choosing the $\theta_i$ appropriately, the current $T$ has zero boundary.

\begin{proposition}\label{prop:def-of-theta-i}
There exist measurable functions $\theta_i\colon K_i\to \R$ satisfying $\abs{\theta_i}=1$ almost everywhere such that $T$ as defined in \eqref{eq:def-integral-current-main-Thm} satisfies $\partial T=0$.
\end{proposition}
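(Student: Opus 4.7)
The plan is to mimic, in the metric setting, the identity \(\partial \bb{M}=0\) for oriented manifolds, by choosing each \(\theta_i\) to encode the local orientation of \(\rho_i\). Using linear local contractibility of \(X\) (via Lemma~\ref{lem:good-continuous-extension-LLC}), I would first extend each \(\rho_i\) to a continuous map \(\tilde\rho_i \colon U_i \to X\) on an open neighborhood \(U_i \subset \R^n\) of \(K_i\). Since both \(\R^n\) and \(X\cong M\) are oriented topological \(n\)-manifolds, for \(\Haus^n\)-a.e.\ \(x \in K_i\) the local topological degree \(\deg(\tilde\rho_i, x)\in \Z\) will be well defined, and I would set \(\theta_i(x):=\deg(\tilde\rho_i, x)\). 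The first preparatory task is to verify that \(|\theta_i|=1\) a.e.: at a density-one point \(x\) of \(K_i\), the bi-Lipschitz property of \(\rho_i\) on \(K_i\) forces \(\rho_i^{-1}(\rho_i(x)) \cap K_i=\{x\}\) and gives local injectivity of \(\rho_i\) near \(x\) within \(K_i\); combining this with suitable control of the extension \(\tilde\rho_i\) near \(K_i\) (so that extraneous preimages from \(U_i \setminus K_i\) are ruled out in small neighborhoods of \(x\)) should reduce the local degree to \(\pm 1\).

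It then suffices to check \(T(1,\pi)=0\) for every \(n\)-tuple \(\pi=(\pi_1, \ldots, \pi_n)\) of Lipschitz functions on \(X\). After rescaling, each \(\pi_j\) may be taken to be \(1\)-Lipschitz. Lemma~\ref{lem:property-star-is-satisfied} (or \eqref{eq:property-star} together with \cite[2.10.19(4)]{Federer-GMT-1969}) will ensure that the hypotheses of Theorem~\ref{thm:bates-thm} hold, so the set of \(\pi' \in \Lip_1(X,\R^n)\) with \(\Haus^n(\pi'(S))=0\) is dense in \(\Lip_1(X,\R^n)\). By the continuity axiom for currents, I may therefore add the standing assumption \(\Haus^n(\pi(S))=0\). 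Next I would expand
\begin{equation*}
T(1,\pi)=\sum_{i\in\N} \int_{K_i}\theta_i(x)\det D(\pi\circ\tilde\rho_i)(x)\, d\mathscr{L}^n(x)
\end{equation*}
and apply the signed area formula to each Lipschitz map \(\pi\circ\tilde\rho_i \colon U_i \to \R^n\). At a.e.\ \(x\in K_i\) with \(\det D(\pi\circ\tilde\rho_i)(x)\ne 0\), multiplicativity of the local degree yields \(\deg(\pi\circ\tilde\rho_i, x)=\deg(\pi, \rho_i(x))\cdot \theta_i(x)\), and since \(\theta_i(x)^2=1\) the factor \(\theta_i(x)\sgn \det D(\pi\circ\tilde\rho_i)(x)\) equals \(\deg(\pi, \rho_i(x))\). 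Summing over \(i\), using that \(\pi^{-1}(y)\subset E\) for \(\Haus^n\)-a.e.\ \(y\) (from \(\Haus^n(\pi(S))=0\)) together with the standard fact that for a.e.\ \(y\) the sum of the local degrees of \(\pi\) over \(\pi^{-1}(y)\) equals \(\deg_{\{y\}}(\pi)\), I would obtain
\begin{equation*}
T(1,\pi)=\int_{\R^n}\deg_{\{y\}}(\pi)\, d\mathscr{L}^n(y)=0,
\end{equation*}
the last equality because \(X\) is compact while \(\R^n\) is non-compact, which forces \(\deg_{\{y\}}(\pi)=0\) identically.

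The step I expect to be the main obstacle is establishing \(|\theta_i|=1\) almost everywhere: outside \(K_i\) the extension \(\tilde\rho_i\) is only continuous, so extraneous preimages could in principle contribute to the local degree, and ruling them out will require combining the bi-Lipschitz structure of \(\rho_i\) with the control of the extension afforded by Lemma~\ref{lem:good-continuous-extension-LLC}. The remaining steps — Bate's approximation argument, the signed area formula, and multiplicativity of the local degree — are essentially bookkeeping once the construction of \(\theta_i\) is in hand.
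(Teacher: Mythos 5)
Your overall architecture matches the paper's proof: you set \(\theta_i(z)=\deg(\tilde\rho_i,z)\) for an extension as in Lemma~\ref{lem:good-continuous-extension-LLC}, you use Bate's theorem (Theorem~\ref{thm:bates-thm}) together with the continuity axiom to reduce to the case \(\Haus^n(\pi(S))=0\), and you combine the area formula, multiplicativity of the local degree, and the fact that \(\deg(\pi)=0\) for a map from the compact space \(X\) into the non-compact \(\R^n\) to get \(T(1,\pi)=0\). That second half of the argument agrees with the paper's.

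The gap is precisely where you flag it: establishing \(|\theta_i|=1\) almost everywhere. Your proposed mechanism --- that at a density-one point \(z\) the bi-Lipschitz property of \(\rho_i\) on \(K_i\) plus control of \(\tilde\rho_i\) near \(K_i\) ``rules out extraneous preimages'' --- only shows that the local degree is \emph{defined} (which is what Lemma~\ref{lem:good-continuous-extension-LLC} gives, via \(\rho_i(z)\notin\tilde\rho_i(B(z,r)\setminus\{z\})\) for small \(r\)). It does not bound the value of \(\deg(\tilde\rho_i,z)\): injectivity of \(\rho_i\) on \(K_i\) says nothing about how \(\tilde\rho_i\) wraps around on the complementary set \(B(z,r)\setminus K_i\), and an isolated preimage alone is compatible with any integer degree (compare \(w\mapsto w^2\) on \(\C\), which has an isolated preimage at \(0\) but degree \(2\)). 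The paper closes this by applying the very degree identity you use later, but now in reverse: take \(\pi\) to be a Lipschitz extension of \(\rho_i^{-1}\), so that \(\psi=\mathrm{id}\) is a Lipschitz extension of \(\pi\circ\rho_i\). The last assertion of Lemma~\ref{lem:good-continuous-extension-LLC} (differentiability of \(\pi\circ\tilde\rho_i\) a.e.\ on \(K_i\) with \(D_z(\pi\circ\tilde\rho_i)=D_z\psi\)) then gives \(D_z(\pi\circ\tilde\rho_i)=\mathrm{id}\) a.e., whence \(\deg(\pi\circ\tilde\rho_i,z)=1\), and multiplicativity forces \(\deg(\tilde\rho_i,z)\cdot\deg(\pi,\rho_i(z))=1\); since both are integers, each is \(\pm1\). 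Without this step (or something equivalent), the proposal does not establish that the \(\theta_i\) you define are actually \(\{\pm1\}\)-valued.
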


We will need the following lemma.

\begin{lemma}\label{lem:good-continuous-extension-LLC}
Let $Y$ be a linearly locally contractible metric space. Suppose $K\subset\R^n$ is compact and $\rho\colon K\to Y$ is bi-Lipschitz. Then there exist $C>0$ and a continuous extension $\tilde{\rho}\colon U\to Y$ of $\rho$ to an open neighborhood $U$ of $K$ such that 
\begin{equation}\label{eq:lip-property-cont-ext}
     d(\rho(z), \tilde{\rho}(w)) \leq C\, |z-w|
\end{equation}
for every $z\in K$ and every $w\in U$. Every such extension has the following properties.  For almost every $z\in K$ there exists $r>0$ such that $\rho(z)\notin\tilde{\rho}(B(z,r)\setminus\{z\})$. Moreover, if $\pi\colon Y\to \R^n$ is Lipschitz then $\pi\circ\tilde{\rho}$ is differentiable at almost every $z\in K$.
\end{lemma}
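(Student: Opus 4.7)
I plan to establish the three claims in turn: existence of $\tilde{\rho}$ satisfying \eqref{eq:lip-property-cont-ext}, the non-return property at almost every $z \in K$, and almost-everywhere differentiability of $\pi \circ \tilde{\rho}$. For the construction, I would take a Whitney decomposition of $\R^n \setminus K$ into dyadic cubes $\{Q_j\}$ with $\diam(Q_j) \asymp d(Q_j, K)$ and triangulate it compatibly; restrict to the neighborhood $U := \{w \in \R^n : d(w, K) < \delta\}$ for $\delta > 0$ so small that all balls encountered below fall inside the linear local contractibility regime of $Y$. For each vertex $v$ of the triangulation pick a nearest point $z_v \in K$ and set $\tilde{\rho}(v) := \rho(z_v)$; since adjacent vertices sit in adjacent Whitney cubes, bi-Lipschitzness of $\rho$ places their images inside a common ball of radius comparable to $d(Q,K)$ around any chosen $\rho(z_Q)$ with $z_Q \in K$ nearest to $Q$. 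Extend inductively over $k$-skeleta: for a $k$-simplex $\sigma$ inside a cube $Q$, the boundary image lies in $B(\rho(z_Q), C_1 d(Q,K))$, and $\Lambda$-linear local contractibility lets us extend $\tilde{\rho}|_{\partial \sigma}$ continuously into $B(\rho(z_Q), \Lambda C_1 d(Q,K))$. Given $w \in \sigma \subset Q$ and $z \in K$, the bound
\[
d(\rho(z), \tilde{\rho}(w)) \leq \Lip(\rho)\,|z - z_Q| + \Lambda C_1 d(Q,K) \leq C|z-w|
\]
then follows from $|z - z_Q| \leq |z - w| + C_2 d(Q,K)$ together with $d(Q,K) \leq |z-w|$ (since $z \in K$).

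For the non-return property, I would work at Lebesgue density points $z \in K$, which form a subset of full $\mathscr{L}^n$-measure. A standard volume argument, using that $B(w, d(w,K))$ is disjoint from $K$ while contained in $B(z, 2|w-z|)$ whenever $w \notin K$ is close to $z$, shows that $d(w,K) = o(|w-z|)$ as $w \to z$ for such $z$. Suppose for contradiction that there is $w \in B(z, r) \setminus \{z\}$ with $\tilde{\rho}(w) = \rho(z)$ for arbitrarily small $r > 0$; if $w \in K$, this contradicts injectivity of $\rho$, so assume $w \notin K$ and choose $z_w \in K$ with $|z_w - w| = d(w,K)$. Then \eqref{eq:lip-property-cont-ext} gives $d(\rho(z_w), \rho(z)) \leq C \cdot d(w,K)$, bi-Lipschitzness of $\rho$ forces $|z_w - z| \leq C' d(w,K)$, and the triangle inequality yields $|w - z| \leq |w - z_w| + |z_w - z| \leq (1 + C') d(w,K)$, which contradicts $d(w,K) = o(|w-z|)$ once $w$ is sufficiently close to $z$.

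For the differentiability claim, I would extend $\pi \circ \rho \colon K \to \R^n$ to a globally Lipschitz map $F \colon \R^n \to \R^n$ coordinatewise via McShane. For any $w \in U$, choosing $z_w \in K$ nearest to $w$, the inequality
\[
|\pi \circ \tilde{\rho}(w) - F(w)| \leq \Lip(\pi)\, d(\tilde{\rho}(w), \rho(z_w)) + |F(z_w) - F(w)| \leq C'' d(w,K)
\]
is immediate from \eqref{eq:lip-property-cont-ext}. At any $z \in K$ that is simultaneously a Lebesgue density point of $K$ and a Rademacher differentiability point of $F$---a full-measure subset of $K$---the estimate $d(w,K) = o(|w-z|)$ combines with the differentiability of $F$ at $z$ to give $\pi \circ \tilde{\rho}(w) - \pi \circ \tilde{\rho}(z) = DF(z)(w - z) + o(|w-z|)$, i.e.\ differentiability of $\pi \circ \tilde{\rho}$ at $z$. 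The main obstacle I anticipate is the Whitney/nerve bookkeeping needed to secure \eqref{eq:lip-property-cont-ext}; once this Lipschitz-type bound is in place, the density-point arguments above for the remaining two claims are essentially automatic.
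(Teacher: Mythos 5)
Your proposal follows essentially the same strategy as the paper's proof: a Whitney decomposition near $K$, triangulated, with $\tilde{\rho}$ defined on vertices by nearest-point selection and extended over skeleta via linear local contractibility to obtain \eqref{eq:lip-property-cont-ext}; the non-return property at Lebesgue density points via the estimate $d(w,K)=o(|w-z|)$ combined with \eqref{eq:lip-property-cont-ext} and bi-Lipschitzness of $\rho$; and differentiability by comparing $\pi\circ\tilde{\rho}$ with a Lipschitz extension of $\pi\circ\rho$ and observing the difference is $O(d(w,K))=o(|w-z|)$ at density points. The arguments are sound and match the paper's proof step for step, with only cosmetic differences (contradiction versus a direct lower bound in the non-return step).
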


The constant $C>0$ only depends on $n$, the linear local contractibility constant, and the bi-Lipschitz constant of $\rho$.

\begin{proof}
Let \(\mathcal{F}\) be a Whitney cube decomposition of \(\R^n \setminus K\), see e.g.~\cite[Theorem VI.1.1]{Stein-singular-integrals-1970}, and let $A$ denote the union of cubes \(Q\in \mathcal{F}\) that are \(\delta\)-close to \(K\) for some \(\delta>0\) sufficiently small. For each vertex $v$ in the $0$-skeleton of $A$ choose a nearest point $a_v\in K$ and define $h(v)=\rho(a_v)$. Since $Y$ is linearly locally contractible we can continuously extend $h$ to the $k$-skeleton of $A$ for every $k=1,\dots, n$. The resulting map $h\colon A\to Y$ is continuous and satisfies \(\diam h(Q)\leq E \cdot d(Q, K)\) for every cube $Q\subset A$, where \(E>0\) is a constant depending only on \(n\), the bi-Lipschitz constant $L$ of $\rho$, and the linear local contractibility constant of $Y$. Suppose now that \(z\in K\) and \(w\in A\). Let \(Q\subset A\) be a cube that contains \(w\) and \(v\) a vertex of \(Q\). We have
\[
d(h(w), \rho(z))\leq d(h(w), h(v))+Ld(a_v, z) \leq (E+4L)
\cdot |w-z|.
\] 
Letting \(U=N_\delta(K)\), we find that \(\tilde{\rho}\colon U \to X\) defined by \(\tilde{\rho}=\rho\) on \(K\) and \(\tilde{\rho}=h\) on \(U\setminus K\) is continuous and satisfies \eqref{eq:lip-property-cont-ext}.

Now, let $\tilde{\rho}\colon U\to Y$ be any continuous extension of $\rho$ satisfying \eqref{eq:lip-property-cont-ext}. Let \(z\in K\) be a Lebesgue density point of $K$ and let $r>0$ be so small that $B(z,r)$ is contained in $U$ and such that for every \(w\in B(z, r)\) there exists \(w'\in K\) with \(\abs{w-w'} \leq (2CL)^{-1} \abs{w-z}\), where $L\geq 1$ is the bi-Lipschitz constant of $\rho$. It follows from \eqref{eq:lip-property-cont-ext} that for every such $w$ we have
$$d(\rho(z), \tilde{\rho}(w))\geq L^{-1}\abs{z-w'} - C\abs{w'-w}\geq (2L)^{-1}\abs{z-w}$$ and thus \(\tilde{\rho}(w)\neq \tilde{\rho}(z)\) for every \(w\in B(z, r)\) distinct from \(z\). 

Finally, let $\pi\colon Y\to\R^n$ be Lipschitz and set $f= \pi\circ \tilde{\rho}$. Let furthermore $\bar{f}$ be a Lipschitz extension of $\pi\circ\rho$ to $U$. We compute $$|f(w)-\bar{f}(w)|\leq (C\Lip(\pi)+\Lip(\bar{f}))\cdot d(w,K)$$ for every $w\in U$. From this it easily follows that if $z\in K$ is a Lebesgue density point of $K$ and $\bar{f}$ is differentiable at $z$ then so is $f$ and $D_zf=D_z\bar{f}$. Since $\bar{f}$ is differentiable almost everywhere on $K$ by Rademacher's theorem the proof is complete.
\end{proof}

In the proof of Proposition~\ref{prop:def-of-theta-i} we will need the notion of local degree which we now recall. Let $Y$ and $Z$ be oriented topological $n$-manifolds and let $f\colon Z\to Y$ be a continuous map. Let $z\in Z$ and suppose \(U\) is an open neighborhood of \(z\) such that \(f\) is a map of pairs \((U, U\setminus z)\to (Y, Y\setminus y)\), where \(y=f(z)\). The local degree of $f$ at $z$ is defined by
\[
\deg(f, z)=\deg_{\hspace{0.1em}\{y\}}(f|_U).
\]
This definition is independent of the open neighborhood \(U\) of \(z\). It is not difficult to show that the local degree of $f$ is the restriction of a Borel function defined on all $Z$. Notice that if $Y=Z=\R^n$ and $f$ is differentiable at $z$ with non-degenerate derivative then $$\deg(f,z) = \sgn(\det D_zf).$$

We are now in a position to provide the proof of the proposition.

\begin{proof}[Proof of Proposition~\ref{prop:def-of-theta-i}]
We begin with the following observation. Let $\rho\colon K\to X$ be a bi-Lipschitz map defined on a compact subset $K\subset\R^n$ and let $\tilde{\rho}\colon U\to X$ be a continuous extension of $\rho$ as in Lemma~\ref{lem:good-continuous-extension-LLC}. Let $\pi\colon X\to\R^n$ be Lipschitz and let $\psi$ be a Lipschitz extension of $\pi\circ\rho$. It follows from the area formula, the coarea inequality, and (the proof of) Lemma~\ref{lem:good-continuous-extension-LLC} that for almost every $y\in \R^n$ the preimage $\pi^{-1}(y)$ is finite and for every $z\in K\cap \psi^{-1}(y)$ the following holds. The maps $\psi$ and $\pi\circ\tilde{\rho}$ are differentiable with $D_z(\pi\circ\tilde{\rho}) = D_z\psi$ non-degenerate, and $\rho(z)\notin \tilde{\rho}(B(z,r)\setminus\{z\})$ for small $r>0$. In particular, the local degrees of $\tilde{\rho}$ at $z$ and of $\pi$ at $\tilde{\rho}(z)$ are defined and
\begin{equation}\label{eq:degree-pi-rhotilde}
     \deg(\tilde{\rho}, z)\cdot \deg(\pi, \tilde{\rho}(z)) = \sgn(\det D_z\psi).
\end{equation}

Now, let $\rho_i\colon K_i\to X$ be a bi-Lipschitz map as at the beginning of this section and let $\tilde{\rho}_i$ be a continuous extension of $\rho_i$ as in Lemma~\ref{lem:good-continuous-extension-LLC}. Applying the observation above to $\tilde{\rho}_i$, a Lipschitz extension $\pi$ of $\rho_i^{-1}$, and the identity map $\psi$ we obtain that $\deg(\tilde{\rho}_i,z)$ exists at almost every $z\in K_i$ and has value $-1$ or $1$. We define $\theta_i(z) = \deg(\tilde{\rho}_i,z)$; by the remark preceding the proof this defines a measurable function on $K_i$.

We finally show that with this choice of $\theta_i$ the current $T$ defined in \eqref{eq:def-integral-current-main-Thm} satisfies $\partial T=0$. For this, let $\pi\colon X\to\R^n$ be a Lipschitz map. We will show that $T(1,\pi)=0$. By Theorem~\ref{thm:bates-thm} and the continuity property for currents we may assume that $\pi$ is such that $\Haus^n(\pi(S))=0$, where \(S\subset X\) denotes the purely unrectifiable part of \(X\). Fix $i$ and let $\psi$ be a Lipschitz extension of $\pi\circ\rho_i$. The change of variables formula together with \eqref{eq:degree-pi-rhotilde} yield 
\begin{equation*}
  \begin{split}
      \rho_{i\#}\bb{\theta_i}(1,\pi) &= \int_{K_i}\theta_i(z)\det (D_z\psi)\,d\mathscr{L}^n(z)\\
      &=\int_{\R^n}\left(\sum_{x\in \pi^{-1}(y)}\mathbbm{1}_{\rho_i(K_i)}(x)\deg(\pi,x)\right)\,d\mathscr{L}^n(y).
  \end{split}
\end{equation*}
Since for every $y\in \R^n$ for which $\pi^{-1}(y)$ is finite we have $$\sum_{x\in\pi^{-1}(y)}\deg(\pi, x) = \deg(\pi) =0,$$ see e.g.~\cite[p. 267]{dold-1980}, and since $\Haus^n(\pi(S))=0$ it follows that $$T(1,\pi) = \sum_{i\in\N}\rho_{i\#}\bb{\theta_i}(1,\pi) = \int_{\R^n}\left(\sum_{x\in\pi^{-1}(y)}\deg(\pi,x)\right)\,d\mathscr{L}^n(y) = 0.$$ This concludes the proof.
\end{proof}

From now on we assume that $\theta_i$ is as in Proposition~\ref{prop:def-of-theta-i} and $T$ is defined by \eqref{eq:def-integral-current-main-Thm}. 

\begin{proposition}\label{prop:mapping-T-to-M}
For every Lipschitz map $\varphi\colon X\to M$ we have $\varphi_\#T = \deg(\varphi)\cdot\bb{M}$.
\end{proposition}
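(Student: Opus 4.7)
The plan is to follow the same strategy as in the proof of Proposition~\ref{prop:def-of-theta-i}, applied now to the composition $\pi\circ\varphi\colon X\to\R^n$, and to exploit the multiplicativity of local degree under composition. Let $(f,\pi)\in\mathcal{D}^n(M)$ be arbitrary; we aim to show that $T(f\circ\varphi,\pi\circ\varphi)=\deg(\varphi)\cdot\bb{M}(f,\pi)$. By the continuity property of currents, Lipschitz approximation, and Theorem~\ref{thm:bates-thm} applied to the Lipschitz map $\pi\circ\varphi$, we may assume that $\Haus^n((\pi\circ\varphi)(S))=0$, where $S\subset X$ denotes the purely $n$-unrectifiable part of $X$.

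For each $i$, pick a continuous extension $\tilde\rho_i\colon U_i\to X$ of $\rho_i$ as in Lemma~\ref{lem:good-continuous-extension-LLC} and a Lipschitz extension $\psi_i$ of $\pi\circ\varphi\circ\rho_i$. Transferring the orientation from $M$ to $X$ via any fixed homeomorphism, we regard $\pi\circ\varphi$ as a continuous map between oriented topological $n$-manifolds and apply the same reasoning that produced \eqref{eq:degree-pi-rhotilde} to obtain, at almost every $z\in K_i$,
\[
\theta_i(z)\cdot\det(D_z\psi_i)=\deg(\pi\circ\varphi,\rho_i(z))\cdot|\det(D_z\psi_i)|.
\]
Applying the Euclidean area formula to each $\psi_i$, summing over $i$, and using that the sets $\rho_i(K_i)$ exhaust the rectifiable part $E$ of $X$ up to an $\Haus^n$-null set together with the reduction $\Haus^n((\pi\circ\varphi)(S))=0$, we deduce
\[
T(f\circ\varphi,\pi\circ\varphi)=\int_{\R^n}\sum_{x\in(\pi\circ\varphi)^{-1}(y)}f(\varphi(x))\cdot\deg(\pi\circ\varphi,x)\,d\mathscr{L}^n(y).
\]

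To finish, observe that for almost every $y\in\R^n$ both $\pi^{-1}(y)$ and $(\pi\circ\varphi)^{-1}(y)$ are finite (this follows from the area formula and coarea inequality, since both $M$ and $X$ have finite $\Haus^n$-measure) and $\pi$ is a local homeomorphism at every $m\in\pi^{-1}(y)$. Partitioning $(\pi\circ\varphi)^{-1}(y)=\bigsqcup_{m\in\pi^{-1}(y)}\varphi^{-1}(m)$, these two facts together imply that near each $x\in\varphi^{-1}(m)$ the map $\varphi$ sends a punctured neighborhood of $x$ into $M\setminus\{m\}$, so that the local degree $\deg(\varphi,x)$ is defined and one has $\deg(\pi\circ\varphi,x)=\deg(\pi,m)\cdot\deg(\varphi,x)$. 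Combined with the standard identity $\sum_{x\in\varphi^{-1}(m)}\deg(\varphi,x)=\deg(\varphi)$, valid since $\varphi^{-1}(m)$ is finite, this yields
\[
\sum_{x\in\varphi^{-1}(m)}\deg(\pi\circ\varphi,x)=\deg(\pi,m)\cdot\deg(\varphi),
\]
and recognizing the resulting integral over $\R^n$ via the area formula for $\pi\colon M\to\R^n$ as $\bb{M}(f,\pi)$ yields the claim. The main technical point is verifying that the degree-theoretic identities hold at almost every $y$; as in the proof of Proposition~\ref{prop:def-of-theta-i}, this is handled by restricting attention to a full-measure set of $y\in\R^n$ for which all the Lipschitz maps involved have finite fibers and well-defined local degrees.
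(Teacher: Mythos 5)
Your overall strategy matches the paper's: compute $\varphi_\# T$ on test pairs via the area formula, dispose of the purely unrectifiable part $S$ through Bate's theorem, and finish with degree theory. However, the reduction step has a genuine gap. You apply Theorem~\ref{thm:bates-thm} to the composite $\pi\circ\varphi\colon X\to\R^n$ and claim "we may assume $\Haus^n((\pi\circ\varphi)(S))=0$". The problem is that perturbing the composite destroys the factorization: if $\sigma\colon X\to\R^n$ is a nearby Lipschitz map with $\Haus^n(\sigma(S))=0$, there is in general no $\varphi'$ (nor $\pi'$) with $\sigma=\pi\circ\varphi'$ (nor $\pi'\circ\varphi$), yet your entire argument after the reduction relies essentially on that factorization, namely the partition $(\pi\circ\varphi)^{-1}(y)=\bigsqcup_{m\in\pi^{-1}(y)}\varphi^{-1}(m)$ and the multiplicativity $\deg(\pi\circ\varphi,x)=\deg(\pi,m)\cdot\deg(\varphi,x)$. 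The paper instead perturbs $\varphi$ itself (embedding $M$ bi-Lipschitzly into some $\R^m$, applying Bate's theorem there, and retracting back to $M$), obtaining $\varphi_k\to\varphi$ uniformly with uniformly bounded Lipschitz constants and $\Haus^n(\varphi_k(S))=0$; this automatically yields $\Haus^n((\pi\circ\varphi_k)(S))=0$ since Lipschitz images of $\Haus^n$-null sets are $\Haus^n$-null, and $\pi\circ\varphi_k$ retains the compositional structure. The reduction also needs the homotopy invariance of degree to guarantee $\deg(\varphi_k)=\deg(\varphi)$ for large $k$, which is absent from your proposal.

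A smaller overclaim: you assert that for a.e.\ $y$ the map $\pi$ is a local homeomorphism at every $m\in\pi^{-1}(y)$. For a Lipschitz map, differentiability at $m$ with nondegenerate derivative (which is what the area formula gives) does not imply local injectivity; there is no inverse function theorem for merely differentiable maps. Fortunately this is not needed: finiteness of $\pi^{-1}(y)$ already makes each $m$ isolated in $\pi^{-1}(y)$, so $\deg(\pi,m)$ is well defined and equals $\sgn(\det D_m\pi)$ by differentiability alone, which is exactly what the paper uses. With both points corrected, the remainder of your argument — the area formula computation, the fiber partition, and the degree-theoretic identity — coincides with the paper's proof.
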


\begin{proof}
Suppose first that $\varphi\colon X\to M$ is a Lipschitz map which furthermore satisfies $\Haus^n(\varphi(S))=0$, where $S\subset X$ is the purely $n$-unrectifiable part of $X$. Let $(g,\tau)\in\mathcal{D}^n(M)$. One calculates exactly as in the proof of Proposition~\ref{prop:def-of-theta-i} that 
$$
\varphi_\#\rho_{i\#}\bb{\theta_i}(g,\tau) = \int_{\R^n}\left(\sum_{x\in \varphi^{-1}(\tau^{-1}(y))}\mathbbm{1}_{\rho_i(K_i)}(x)g(\varphi(x))\deg(\tau\circ \varphi, x)\right)\,d\mathscr{L}^n(y).
$$
For almost every $y\in\R^n$ the preimage $\tau^{-1}(y)$ is finite, does not intersect $\varphi(S)$, and for every $z\in \tau^{-1}(y)$ we have that $\varphi^{-1}(z)$ is finite and $\tau$ is differentiable at $z$ with non-degenerate derivative. For every such $y$ and every $z\in\tau^{-1}(y)$ and $x\in \varphi^{-1}(z)$ we thus have 
$$
\deg(\tau\circ \varphi, x)= \deg(\tau, z)\cdot \deg(\varphi, x) = \sgn(\det D_z\tau)\cdot \deg(\varphi, x).
$$ 
Summing over $i$ and using the fact that $\varphi^{-1}(\tau^{-1}(y))\cap S=\emptyset$ for almost every $y\in\R^n$, we conclude that 
\begin{equation*}
\begin{split}
     \varphi_\#T(g, \tau) &=\int_{\R^n}\left(\sum_{z\in\tau^{-1}(y)}\left(\sum_{x\in \varphi^{-1}(z)} \deg(\varphi, x)\right)\cdot g(z)\sgn(\det D_z\tau)\right)\,d\mathscr{L}^n(y).
\end{split}
\end{equation*}
By the use of the change of variables formula, this shows that $\varphi_\#T= \deg(\varphi)\cdot \bb{M}$ and establishes the proposition under the extra assumption that $\Haus^n(\varphi(S))=0$. 

Now, let $\varphi\colon X\to M$ be any Lipschitz map. Since $M$ admits a bi-Lipschitz embedding into some $\R^m$ and its image in $\R^m$ is a Lipschitz neighborhood retract it follows from Theorem~\ref{thm:bates-thm} that there exists a sequence $(\varphi_k)$ of Lipschitz maps $\varphi_k\colon X\to M$ converging uniformly to $\varphi$, with uniformly bounded Lipschitz constant, and such that $\Haus^n(\varphi_k(S))=0$ for every $k$. Since $\deg(\varphi_k)=\deg(\varphi)$ for all sufficiently large $k$ it follows from the definition of a current and the first part of the proof that 
\begin{align*}
\varphi_\#T(g,\tau) &= T(g\circ \varphi, \tau\circ \varphi) = \lim_{k\to \infty} T(g\circ \varphi_k, \tau\circ \varphi_k) \\
&= \lim_{k\to\infty} \varphi_{k\#}T(g,\tau) = \deg(\varphi)\cdot \bb{M}(g,\tau)
\end{align*}
for all $(g,\tau)\in\mathcal{D}^n(M)$ and thus $\varphi_{\#}T = \deg(\varphi) \cdot\bb{M}$. This concludes the proof.
\end{proof}

The following proposition shows that $T$ is a generator of the $n$-th homology group via integral currents in $X$.

\begin{proposition}\label{prop:T-is-generator}
Every cycle $T'\in\bI_n(X)$ is of the form $T'=mT$ for some $m\in\Z$.
\end{proposition}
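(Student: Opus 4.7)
The plan is to follow the outline sketched in the introduction: choose an integer $m$ so that $Q := T'-mT$ push-forwards to zero on $M$ under Lipschitz approximations of a homeomorphism $\rho\colon X\to M$, then construct integral fillings of $Q$ in arbitrarily small neighborhoods of $X$ inside $\ell^\infty$, and finally conclude $Q=0$ via the metric approximation property of $\ell^\infty$ combined with the Federer--Fleming deformation theorem.

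First I would fix a homeomorphism $\rho\colon X\to M$ and apply Lemma~\ref{lem:lip-approx-easy}, using that $M$ is an absolute Lipschitz neighborhood retract, to obtain Lipschitz maps $\varphi_k\colon X\to M$ converging uniformly to $\rho$. For $k$ large, $\varphi_k$ is homotopic to $\rho$ through a Lipschitz homotopy (via straight lines in bi-Lipschitz charts of $M$ composed with a Lipschitz retraction onto $M$), so $\deg\varphi_k=\deg\rho\in\{\pm 1\}$. Since $\Haus^{n+1}(M)=0$ forces $\bI_{n+1}(M)=\{0\}$, every cycle in $\bI_n(M)$ is an integer multiple of $\bb{M}$; write $\varphi_{k\#}T' = n_k\bb{M}$. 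A Lipschitz homotopy between $\varphi_k$ and $\varphi_{k'}$ pushes $T'\times\bb{[0,1]}$ into $\bI_{n+1}(M)=\{0\}$, so $\varphi_{k'\#}T' - \varphi_{k\#}T'=0$ and $n_k=n_\infty$ is eventually constant. Setting $m := n_\infty\deg(\rho)$, Proposition~\ref{prop:mapping-T-to-M} gives $\varphi_{k\#}(T'-mT)=0$ for $k$ large; let $Q := T'-mT$.

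Next I would construct the fillings. Embed $X$ into $\ell^\infty$ via Fréchet and apply Lemma~\ref{lem:lip-approx-easy} to $\rho^{-1}\colon M\to X\subset\ell^\infty$, using that $\ell^\infty$ is injective and hence an absolute Lipschitz retract with constant $1$, to obtain Lipschitz maps $\psi_k\colon M\to\ell^\infty$ with $\psi_k\to\rho^{-1}$ uniformly. Then $g_k := \psi_k\circ\varphi_k\colon X\to\ell^\infty$ is Lipschitz, satisfies $g_{k\#}Q = \psi_{k\#}\varphi_{k\#}Q = 0$, and, by the triangle inequality together with uniform continuity of $\rho^{-1}$ on compact $M$, converges uniformly to the inclusion $\iota\colon X\hookrightarrow\ell^\infty$. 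With $\epsilon_k:=\sup_{x\in X}\norm{g_k(x)-\iota(x)}_\infty\to 0$, the straight-line homotopy $H_k(x,t):=(1-t)\iota(x)+tg_k(x)$ is Lipschitz on $X\times[0,1]$, and the push-forward $R_k := H_{k\#}(Q\times\bb{[0,1]})\in\bI_{n+1}(\ell^\infty)$ satisfies $\partial R_k = g_{k\#}Q - Q = -Q$ with $\spt R_k\subset\overline{N_{\epsilon_k}(X)}$. Hence $Q$ admits integral fillings in arbitrarily small neighborhoods of $X$ in $\ell^\infty$.

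Finally I would invoke the metric approximation property of $\ell^\infty$: for every $\delta>0$ there exists a finite-rank linear map $P_\delta\colon\ell^\infty\to\ell^\infty$ with $\norm{P_\delta}\leq 1$ and $\norm{P_\delta(y)-y}_\infty<\delta$ for $y$ in a prescribed compact set containing $X$. The straight-line homotopy between $\iota$ and $P_\delta$ applied to $Q$ produces $J_\delta\in\bI_{n+1}(\ell^\infty)$ with $\partial J_\delta = P_{\delta\#}Q - Q$; because this homotopy is $1$-Lipschitz in the spatial variable and moves points by at most $\delta$, a standard refined mass bound for straight-line homotopies gives $\mass(J_\delta)\leq C_n\delta\mass(Q)$. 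It therefore suffices to show $P_{\delta\#}Q=0$ for all small $\delta$: once this is established, $Q$ is a boundary of arbitrarily small mass in $\ell^\infty$, hence has zero flat norm and, being integral, vanishes. To prove $P_{\delta\#}Q=0$, I would use that the push-forwards $P_{\delta\#}R_k\in\bI_{n+1}(\R^{N(\delta)})$ are fillings of $-P_{\delta\#}Q$ with supports contained in $\overline{N_{\epsilon_k}(P_\delta(X))}$, and that $P_\delta(X)$ is a compact set in $\R^{N(\delta)}$ with $\Haus^n(P_\delta(X))\leq \Haus^n(X)<\infty$ and hence $\Haus^{n+1}(P_\delta(X))=0$. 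Applying the Federer--Fleming deformation theorem in $\R^{N(\delta)}$ to $P_{\delta\#}R_k$ with grid size comparable to $\epsilon_k$, and extracting a limit as $k\to\infty$, one produces an integral $(n+1)$-current in $\R^{N(\delta)}$ supported on a set of zero $\Haus^{n+1}$-measure and with boundary $-P_{\delta\#}Q$; such a current must vanish, giving $P_{\delta\#}Q=0$. The main obstacle will be this final step: since $\mass(R_k)$ may blow up with $k$ (the Lipschitz constants of the $\psi_k$ need not stay bounded as $\psi_k\to\rho^{-1}$), the deformation theorem must be applied so that the boundary contribution, which is controlled by $\mass(P_{\delta\#}Q)\leq\mass(Q)$, is isolated from the possibly unbounded bulk mass of the fillings.
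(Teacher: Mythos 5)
Your overall strategy matches the paper's: set $m$ so that $Q := T'-mT$ has zero push-forward under Lipschitz approximations of $\varrho$, then produce integral fillings of $Q$ in arbitrarily small neighborhoods of $X$ via a homotopy through $M$, and finally kill $Q$ using the metric approximation property and the Federer--Fleming deformation theorem. The first two stages are essentially the same as the paper's argument (though the paper works in the injective hull $E(X)$ rather than $\ell^\infty$, which keeps the target compact and separable so that Lemma~\ref{lem:lip-approx-easy} applies verbatim; with $\ell^\infty$ you would need to argue coordinatewise since $\ell^\infty$ is not separable).

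The gap is in your final step, and you correctly sense it yourself. You apply the deformation theorem to the \emph{fillings} $P_{\delta\#}R_k\in\bI_{n+1}(\R^{N(\delta)})$ with grid size $\sim\varepsilon_k$ and then want to ``extract a limit'' as $k\to\infty$. But the deformation theorem estimates the polyhedral part and the remainder in terms of $\mass(P_{\delta\#}R_k)$, which, as you note, may blow up with $k$ (the Lipschitz constants of the approximations $\psi_k$ are uncontrolled). The boundary bound $\mass(\partial(P_{\delta\#}R_k))\leq\mass(Q)$ does not rescue this: it controls only the lower-dimensional correction term, not the bulk. So the limiting $(n+1)$-current you want does not come for free, and the plan as written does not close. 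The fix --- which is what the paper's Lemma~\ref{lem:fillrad-0-implies-0} implements --- is to apply the deformation theorem not to the fillings but to the finite-dimensional projection $P_{\delta\#}Q$ of the \emph{cycle} itself, whose mass is bounded by $\mass(Q)$. This yields $P_{\delta\#}Q = P + \partial Q'$ with $P$ a polyhedral cycle in the $n$-skeleton and $\mass(Q')$ arbitrarily small. The fillings $P_{\delta\#}R_k$ are then used only \emph{qualitatively}: since $P_\delta(X)$ is compact with $\Haus^{n+1}(P_\delta(X))=0$, the projection centers in the deformation can be chosen to avoid it, so $P=\pi_\#(P_{\delta\#}Q)$ for a Lipschitz map $\pi$, hence $P$ inherits fillings in arbitrarily small neighborhoods of $\pi(P_\delta(X))$, which retract into the $n$-skeleton; any $(n+1)$-current supported in the $n$-skeleton is zero, so $P=0$, so $P_{\delta\#}Q$ has arbitrarily small filling volume, hence vanishes. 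Once you reorganize your step 6 along these lines (deform the cycle, not the filling), the rest of your argument --- including the homotopy from $\iota$ to $P_\delta$ and the conclusion via vanishing flat norm --- goes through.
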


We will need the following lemma.

\begin{lemma}\label{lem:fillrad-0-implies-0}
Let $Y$ be a complete separable metric space and suppose  $C\subset Y$ is a compact set with $\Haus^{n+1}(C)=0$ for some $n\in\N$. Let furthermore  $S\in\bI_n(Y)$ be a cycle supported in $C$. If for every $\varepsilon>0$ there exists $R\in\bI_{n+1}(Y)$ supported in $N_\varepsilon(C)$ and satisfying $\partial R = S$ then $S=0$.
\end{lemma}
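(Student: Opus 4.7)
The plan is to reduce the assertion to a tautology in Euclidean space via Lipschitz pushforwards. First, it suffices to prove that $\varphi_\# S = 0$ in $\bI_n(\R^{n+1})$ for every Lipschitz map $\varphi\colon Y\to\R^{n+1}$: given any $(f,\pi_1,\dots,\pi_n)\in\mathcal{D}^n(Y)$, packaging it as $\varphi=(f,\pi_1,\dots,\pi_n)$ and using the coordinate projections $\tau_i(x_0,\dots,x_n)=x_i$ together with any bounded Lipschitz truncation $g\colon\R^{n+1}\to\R$ of the $x_0$-coordinate satisfying $g\circ\varphi=f$, one recovers $S(f,\pi_1,\dots,\pi_n)=\varphi_\# S(g,\tau_1,\dots,\tau_n)$.

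So fix a Lipschitz $\varphi\colon Y \to \R^{n+1}$ and consider the pushforwards $\varphi_\# R_\varepsilon\in \bI_{n+1}(\R^{n+1})$. Being top-dimensional integral currents in Euclidean space, they admit unique representations $\varphi_\# R_\varepsilon=\bb{\theta_\varepsilon}$ for compactly supported integer-valued BV functions $\theta_\varepsilon\colon\R^{n+1}\to\Z$; compactness of the support follows from the inclusion $\varphi(N_\varepsilon(C))\subset\{x\in\R^{n+1}:d(x,\varphi(C))\leq \Lip(\varphi)\varepsilon\}$ together with the compactness of $\varphi(C)$. Since all $\bb{\theta_\varepsilon}$ share the same boundary $\varphi_\# S$, applying the constancy theorem to the compactly supported integral $(n+1)$-cycle $\bb{\theta_\varepsilon-\theta_{\varepsilon'}}$ gives $\theta_\varepsilon=\theta_{\varepsilon'}$ almost everywhere; denote the common value by $\theta$.

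Intersecting the inclusions $\spt\theta\subset\{x\in\R^{n+1}:d(x,\varphi(C))\leq \Lip(\varphi)\varepsilon\}$ over $\varepsilon>0$ and using that $\varphi(C)$ is closed yields $\spt\theta\subset\varphi(C)$. But $\Haus^{n+1}(\varphi(C))\leq \Lip(\varphi)^{n+1}\Haus^{n+1}(C)=0$, and on $\R^{n+1}$ the Hausdorff measure $\Haus^{n+1}$ coincides with the Lebesgue measure $\mathscr{L}^{n+1}$, so $\theta=0$ almost everywhere. Hence $\varphi_\# R_\varepsilon=\bb{\theta}=0$ and $\varphi_\# S=\partial(\varphi_\# R_\varepsilon)=0$, completing the reduction. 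The only non-routine ingredient is the uniqueness supplied by the constancy theorem in top dimension; once this is in hand, the hypothesis that fillings live in arbitrarily thin neighborhoods of $C$ converts directly into the pointwise support bound $\spt\theta\subset\varphi(C)$, and the rest is essentially measure-theoretic, so I do not foresee any genuine technical obstacles.
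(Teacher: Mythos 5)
Your proof is correct, and it takes a genuinely different route from the one in the paper. The paper embeds $Y$ into $\ell^\infty$, uses the metric approximation property to project onto a finite-dimensional subspace $V$ with small displacement on $C$, then runs the Federer--Fleming deformation theorem inside $V$ (choosing the radial projection centers off the null set $\psi(C)$) to conclude $\psi_\#S=0$, and finally uses the homotopy formula with the straight-line homotopy to show that $S$ admits fillings of arbitrarily small mass, hence $S=0$. Your argument instead tests $S$ directly against Lipschitz maps $\varphi\colon Y\to\R^{n+1}$, exploits the representation of top-dimensional integral currents in $\R^{n+1}$ as $\bb{\theta}$ with $\theta$ an integer-valued function of bounded variation, and uses the constancy theorem only in its simplest form (a compactly supported top-dimensional integral cycle in $\R^{n+1}$ is zero) to deduce that the multiplicity $\theta$ is independent of $\varepsilon$; intersecting the support constraints over $\varepsilon$ then forces $\theta$ to be supported in the Lebesgue-null set $\varphi(C)$, so $\theta=0$ and $\varphi_\#S=\partial\bb{\theta}=0$. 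The reduction via $\varphi=(f,\pi_1,\dots,\pi_n)$ and a truncation $g$ with $g\circ\varphi=f$ is routine but correct. Your approach buys simplicity: it avoids the metric approximation property, the explicit deformation theorem, and the product-current/homotopy-formula machinery, at the cost of invoking the BV structure theorem for top-dimensional integral currents in $\R^{n+1}$. The paper's approach is heavier but also yields the quantitative statement that the filling mass of $S$ can be made arbitrarily small, which your argument does not directly give.
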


\begin{proof}
We may view $Y$ as a subset of $\ell^\infty$. Since $\ell^\infty$ has the metric approximation property (see e.g.~\cite[Proposition A.6]{de-pauw-2014} for an elementary proof) there exist for every $\varepsilon>0$ a finite-dimensional subspace $V\subset\ell^\infty$ and a linear $1$-Lipschitz map $\psi\colon \ell^\infty\to V$ such that $\|y-\psi(y)\|\leq \varepsilon$ for every $y\in C$. The set $C'=\psi(C)$ is compact and satisfies $\Haus^{n+1}(C')=0$. Moreover, the current $S'=\psi_\#S\in\bI_n(V)$ is supported in $C'$ and admits a filling in the $\varepsilon'$-neighborhood of $C'$ for every $\varepsilon'>0$. Since $V$ is finite-dimensional it follows from (the proof of) Federer--Fleming's deformation theorem that $S'=0$. Indeed, let $\delta>0$ and choose a cubical subdivision of $V$ into Euclidean cubes of sidelength $\delta$. By the proof of the deformation theorem there exists a decomposition $S'= P+\partial Q$, where $P$ is a polyhedral cycle in the $n$-skeleton of the cubical subdivision and $Q\in\bI_{n+1}(V)$. Moreover, $\mass(P)\leq C\mass(S')$ and $\mass(Q)\leq C\delta \mass(S')$ for some constant \(C\) depending on the dimension $N$ of $V$. Roughly speaking, the current $P$ is obtained by pushing $S'$ via radial projections first to the $(N-1)$-skeleton, then to the $(N-2)$-skeleton, etc. until the $n$-skeleton. Since $C'$ is compact and satisfies $\Haus^{n+1}(C')=0$ we can choose the projection centers not to lie on (the projections of) $C'$. Hence, $P$ is the pushforward under a Lipschitz map $\pi$ and thus admits a filling in an arbitrary small neigbhorhood of $\pi(C')$. Since the $n$-skeleton is a Lipschitz neigborhood retract it follows that $P$ admits a filling in the $n$-skeleton. From this it follows that $P=0$ and hence $S'=\partial Q$. Since $\mass(Q)\leq C\delta \mass(S')$ and $\delta>0$ was arbitrary it follows that $S'=0$, as claimed.

Finally, let $\bb{0,1}\times S$ denote the product current (see \cite[Section 3.3]{basso2021undistorted} for the definition), and note that this is an element of $\bI_{n+1}([0,1]\times \ell^\infty)$. Let furthermore $H\colon [0,1]\times C\to \ell^\infty$ be the straight-line homotopy from the identity on $C$ to $\psi$. It follows from the homotopy formula, see e.g.~\cite[Theorem~2.9]{wenger-2005}, that $R=H_\#(\bb{0,1}\times S)\in\bI_{n+1}(\ell^\infty)$ satisfies $\partial R = S$. Moreover, \cite[Lemma 3.5]{basso2021undistorted} shows that $$\mass(R) \leq (n+1)\varepsilon \mass(S')\leq (n+1)\varepsilon \mass(S).$$
 Since $\varepsilon>0$ was arbitrary it follows that $S=0$, thus concluding the proof. 
\end{proof}

We can now provide the proof of the proposition.

\begin{proof}[Proof of Proposition~\ref{prop:T-is-generator}]
Let $T'\in \bI_n(X)$ and let $\rho\colon X\to M$ be a homeomorphism of degree one. By Lemma~\ref{lem:lip-approx-easy} there exists a Lipschitz map $f\colon X\to M$ which is homotopic to $\rho$. For any such $f$ we have  $f_\#T = \bb{M}$ and there exists $m\in\Z$ such that $f_\#T' = m\cdot \bb{M}$. Notice that $m$ is independent of the choice of $f$. We will show that $T'=mT$. For this it suffices by Lemma~\ref{lem:fillrad-0-implies-0} to prove that the cycle $S:= mT-T'$ admits a filling in the $\varepsilon$-neighborhood of $X$ in $E(X)$ for every $\varepsilon>0$.
 
By Lemma~\ref{lem:lip-approx-easy} there exist Lipschitz approximations $f\colon X\to M$ and $\eta\colon M\to E(X)$ of $\rho$ and $\rho^{-1}$, respectively, such that the Lipschitz map $\psi= \eta\circ f$ satisfies $d(\psi(x), x)\leq \varepsilon$ for every $x\in X$. Clearly, we have $f_\#S=0$ and hence $\psi_\#S=0$. Furthermore, since $E(X)$ is injective there exists a Lipschitz homotopy $H\colon [0,1]\times X\to E(X)$ from $\psi$ to the identity on $X$ with image in the closed $\varepsilon$-neighborhood of $X$. It follows that the integral $(n+1)$-current $R=H_\#(\bb{0,1}\times S)$ is a filling of $S$ supported in $N_\varepsilon(X)$. Here, as in the proof of Lemma~\ref{lem:fillrad-0-implies-0}, we use $\bb{0,1}\times S$ to denote the product current and we have used the homotopy formula to show that $\partial R=S$. Since $\varepsilon>0$ was arbitrary it follows from Lemma~\ref{lem:fillrad-0-implies-0} that $S=0$ and hence $T'=mT$, as claimed.
\end{proof}

The next proposition completes the proof of Theorem~\ref{thm:main}.

\begin{proposition}\label{prop:lower-bound-mass}
 There exists $c>0$ depending on $n$ and the linear local contractibility constant of $X$ such that 
 \begin{equation*}
\norm{T}(B(x, r)) \geq c \cdot r^n
\end{equation*}
for every \(x\in X\) and every \(r\in (0, \diam X)\).
\end{proposition}

\begin{proof}
Fix $x\in X$. Since for all $r>0$, by the slicing inequality,
$$
\int_0^r\mass(\partial(T\on B(x,s)))\,ds \leq \norm{T}(B(x,r)),
$$ 
it is enough to prove that for almost every $r\in \big(0, 2^{-3} \cdot \diam X\big)$,  
 \begin{equation}\label{eq:lower-bound-slices}
 \mass(\partial(T\on B(x,r)))\geq cn\cdot r^{n-1},
 \end{equation}
 where $c>0$ is such that $4\cdot D^{1+\frac{1}{n}}\cdot (c n)^{\frac{1}{n-1}}= (5Q +3)^{-1}$.  Here, \(D\) denotes the constant \(D_k\) from Theorem~\ref{thm:filling-inj-spaces} for \(k=n-1\) and $Q\geq 1$ is the constant from Proposition~\ref{prop:eps-cont-extension}. 
 
 Let $r\in \big(0, 2^{-3} \cdot \diam X\big)$ be such that $T\on B(x,r)\in\bI_n(X)$,  with boundary supported in $\{x'\in X: d(x, x')=r\}$, and suppose \eqref{eq:lower-bound-slices} is not true. Let $U:= \partial(T\on B(x,r))$ and let $V\in \bI_n(E(X))$ be a minimal filling of $U$. It then follows that $\spt V$ has finite Hausdorff $n$-measure and thus topological dimension at most $n$; moreover, $\mass(V)\leq c' \cdot r^n$ and
 \begin{equation}\label{eq:control-support-V}
      d_H(\spt V, \spt U)\leq c'\cdot r,
 \end{equation}
where \(d_H\) denotes the Hausdorff distance and \(c'= D^{1+\frac{1}{n}}\cdot (c n)^{\frac{1}{n-1}}\). 

Set $Y:= X\cup \spt V$. We construct a continuous map $\pi\colon Y\to X$ which restricts to the identity on $X$ and such that 
\begin{equation}\label{eq:property-retraction-eps-cont}
    d_H(\pi(\spt V), \spt U)\leq \frac{r}{4}.
\end{equation} 
For this, let $f\colon Y\to X$ be a (possibly discontinuous) map satisfying $d(f(y), y)\leq 2d(y, X)$ for every $y\in Y$. Notice that $f$ restricts to the identity on $X$ and 
$$
d(f(y), f(y'))\leq 2 d(y, X) + 2 d(y', X) + d(y,y')
$$ 
for all $y, y'\in Y$. This together with \eqref{eq:control-support-V} implies that $f$ is $5c'r$-continuous and moreover continuous at every point of $X$. Since $5c'r<Q^{-1}\diam X$ there exists by Proposition~\ref{prop:eps-cont-extension} a continuous map $\pi\colon Y\to X$ which agrees with $f$ on $X$ and satisfies $d(\pi, f)<5Qc'r$. It is easy to show that $\pi$ satisfies \eqref{eq:property-retraction-eps-cont}. Indeed, let $y\in\spt V$ and notice that by \eqref{eq:control-support-V} there exists $z\in\spt U$ with $d(y,z)\leq c'r$. We estimate
\begin{equation*}
        d(\pi(y), z)\leq d(\pi(y), f(y)) + d(f(y), y) + d(y,z)< (5Q+3)c'r\leq \frac{r}{4},
\end{equation*}
from which \eqref{eq:property-retraction-eps-cont} follows.

Set $h= \frac{r}{4}$ and let $\varrho\colon X\to M$ be a homeomorphism of degree one. Let $\varepsilon>0$ be as in Lemma~\ref{lem:technical-inclusions-approx-homeo} below. As $X$ is compact and $\varrho$ a homeomorphism, there exists $\varepsilon'>0$ such that $$N_{\varepsilon'}(\varrho(A))\subset \varrho(N_h(A))$$ for all $A\subset X$.
Let $\varphi\colon Y\to M$ be a Lipschitz map satisfying $d(\varphi, \varrho\circ \pi)<\min\{\varepsilon, \varepsilon'\}$. Due to \eqref{eq:property-retraction-eps-cont} we have
$$
\spt (\varphi_\#V) \subset N_{\varepsilon'}\big(\varrho(\pi(\spt V))\big)\subset \varrho\big(N_h(\pi(\spt V))\big)\subset \varrho\big(\big\{x'\in X: \frac{r}{2}<d(x',x)<\frac{3r}{2}\big\}\big).
$$
It follows that the integral $n$-cycle in $M$ given by $W:=\varphi_\#(T\on B(x,r) - V)$ has support in $\varrho(B(x, \frac{3r}{2}))\not= M$ and hence $W=0$. Since 
$$
\varphi_\#(T\on B(x,r)) = \bb{M} - \varphi_\#(T\on (X\setminus B(x,r)))
$$ 
and, by Lemma~\ref{lem:technical-inclusions-approx-homeo},
$$
\spt\big(\varphi_\#(T\on (X\setminus B(x,r))\big) \subset \varphi(X\setminus B(x,r))\subset \varrho(X\setminus B(x,r/2)),
$$ 
we find that 
$$
\varrho(B(x,r/2))\subset \spt\big(\varphi_\#(T\on B(x,r))\big).
$$ 
It follows from this that $W\not=0$, which is impossible. This contradiction concludes the proof.
\end{proof}

In the proof above we used the following technical lemma which will also be used in the next proposition as well as in Section~\ref{sec:relative-isoperimetric-inequality}.

\begin{lemma}\label{lem:technical-inclusions-approx-homeo}
 Let \(\varrho\colon X \to M\) be a homeomorphism between compact metric spaces. Then for every $h>0$ there exists $\varepsilon>0$ with the following property. If $\varphi\colon X\to M$ is continuous with $d(\varphi,\varrho)<\varepsilon$ then $$\varphi(N_s(A))\subset\varrho(N_{s+h}(A)) \, \,  \text{ and } \, \, \varphi(X\setminus N_s(A))\subset \varrho(X\setminus N_{s-h}(A))$$ for all $s>0$ and $A\subset X$.
\end{lemma}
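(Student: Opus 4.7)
The plan is to reduce the statement to a displacement bound for the composition $\psi := \varrho^{-1}\circ\varphi\colon X \to X$, and then conclude both inclusions by a triangle inequality.

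Since $X$ and $M$ are compact and $\varrho$ is a homeomorphism, the inverse $\varrho^{-1}\colon M \to X$ is uniformly continuous. Given $h>0$, I would first choose $\varepsilon>0$ so small that $d_M(y_1,y_2)<\varepsilon$ implies $d_X(\varrho^{-1}(y_1),\varrho^{-1}(y_2))<h$. For any continuous $\varphi\colon X \to M$ with $d(\varphi,\varrho)<\varepsilon$, applying this at $y_1=\varphi(x)$ and $y_2=\varrho(x)$ gives $d_X(\psi(x),x)<h$ for every $x\in X$; that is, $\psi$ is an $h$-small perturbation of $\id_X$.

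From there both inclusions are immediate. For the first, if $x\in N_s(A)$ pick $a\in A$ with $d(x,a)<s$; then
\[
d(\psi(x),a)\leq d(\psi(x),x)+d(x,a)<h+s,
\]
so $\psi(x)\in N_{s+h}(A)$ and hence $\varphi(x)=\varrho(\psi(x))\in \varrho(N_{s+h}(A))$. For the second, if $x\in X\setminus N_s(A)$, that is $d(x,A)\geq s$, then for every $a\in A$,
\[
d(\psi(x),a)\geq d(x,a)-d(\psi(x),x) > d(x,a)-h,
\]
so $d(\psi(x),A)\geq s-h$, giving $\psi(x)\in X\setminus N_{s-h}(A)$ and hence $\varphi(x)\in\varrho(X\setminus N_{s-h}(A))$.

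There is no substantive obstacle; the only minor subtlety is the edge case $s-h\leq 0$, where I would invoke the paper's convention that $N_r(A)=\varnothing$ for $r\leq 0$, so that $X\setminus N_{s-h}(A)=X$ and the second inclusion holds trivially. The case $A=\varnothing$ is likewise trivial since $N_s(\varnothing)=\varnothing$.
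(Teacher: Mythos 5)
Your proof is correct and takes essentially the same approach as the paper: both set $y=\varrho^{-1}(\varphi(x))$ (your $\psi(x)$), use uniform continuity of $\varrho^{-1}$ (coming from compactness) to get $d(x,y)\leq h$, and then conclude by the triangle inequality. Your explicit handling of the edge cases $s-h\leq 0$ and $A=\varnothing$ is a small nicety not spelled out in the paper.
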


\begin{proof}
Let $h>0$. Since $\varrho$ is a homeomorphism there exists $\epsilon>0$ such that for all $x,y \in X$ with $d(\varrho(x),\varrho(y)) \leq \epsilon$ we have $d(x,y) \leq h$. Now, let $A \subset X$ and $s>0$. If $x \in N_s(A)$, then there exists $y \in X$ with $\varphi(x) = \varrho(y)$. Thus $d(\varrho(x),\varrho(y)) = d(\varrho(x),\varphi(x)) \leq \epsilon$. It follows that $d(x,y) \leq h$ and hence $y \in N_{s+h}(A)$. In particular, $\varphi(x) \in \varrho(N_{s+h}(A))$. 

On the other hand, if $x \in X \setminus N_s(A)$, then by the same reasoning as before there is $y \in X$ with $\varphi(x) = \varrho(y)$ and $d(x,y) \leq h$. Therefore, $x \in X \setminus N_{s-h}(A)$ and $\varphi(x) \in \varrho(X \setminus N_{s-h}(A))$.
\end{proof}

This completes the proof of Theorem~\ref{thm:main}. We now provide the very short proof of our rectifiability result.

\begin{proof}[Proof of Corollary~\ref{cor:recitifiability}]
By Theorem~\ref{thm:main} the $n$-th homology group $H_n^{\IC}(X)$ of $X$ via integral currents is infinite cyclic and every generator $T$ satisfies $\norm{T}(B(x,r))\geq cr^n$ for all $x\in X$ and all $r\in(0,\diam X)$, where $c>0$ only depends on the data of $X$. In particular it follows that $\set T = X$, where $\set T$ denotes the characteristic set of $T$ (see \eqref{eq:characteristic-set} for the definition). By \cite[Theorem~4.6]{ambrosio-kirchheim-2000}, it follows that $\set T$ is $n$-rectifiable and thus the proof is complete. 
\end{proof}

We remark that it has recently been shown in \cite{david-american-2016} that if $X$ is as in Corollary~\ref{cor:recitifiability} and moreover Ahlfors $n$-regular then $X$ is even uniformly rectifiable. The arguments in \cite{david-american-2016} heavily rely on results of Semmes \cite{semmes-1996}. 

As alluded to in the introduction, one cannot expect a lower bound on the mass measure of a cycle as in Theorem~\ref{thm:main} without the assumption of linear local contractibility. Nevertheless, the following simple proposition shows that even without the linear local contractibility condition the support of a cycle as in the theorem is the entire space $X$.

\begin{proposition}\label{prop:support-is-all-of-X}
Let $X$ be a metric spaces homeomorphic to a closed, oriented, smooth $n$-manifold $M$. If $T\in\bI_n(X)$ is a cycle satisfying $\varphi_\#T = \deg(\varphi)\cdot\bb{M}$ for every Lipschitz map $\varphi\colon X\to M$ then $\spt T = X$.
\end{proposition}

\begin{proof}
Suppose by contradiction that $\spt T\not= X$ and let $x\in X$ and $r>0$ be such that $B(x, 2r)$ does not intersect $\spt T$. Let $\varrho\colon X\to M$ be a positively oriented homeomorphism and let $\varepsilon>0$ be as in Lemma~\ref{lem:technical-inclusions-approx-homeo} for $h=r$. By Lemma~\ref{lem:lip-approx-easy} there exists a Lipschitz map $\varphi\colon X\to M$ with $d(\varphi, \varrho)<\varepsilon$. We may assume that $\varepsilon>0$ is so small that $\varphi$ is homotopic to $\varrho$ and hence $\deg(\varphi)=1$. Let $U$ be the non-empty open subset of $M$ given by $U=\varrho(B(x, r))$ and notice that by Lemma~\ref{lem:technical-inclusions-approx-homeo} we have that $\varphi^{-1}(U)\subset B(x, 2r)$ and hence $\varphi^{-1}(U)\cap \spt T=\varnothing$. Since $\varphi_\#T=\bb{M}$ it follows that $$\bb{M}\on U = (\varphi_\#T)\on U = \varphi_\#(T\on \varphi^{-1}(U)) = 0,$$ which is a contradiction. This shows that $\spt T=X$.
\end{proof}

%%%%%%%%%%%%%%%%%%%%%%%%%%%%%%%%%%%%%%%%%%%%%%%%%%%%%%%%%%%%%%%%%%%%%%%%%%%%%%%%%%%%%%%%%%%%%%%%%%%%%%%%%%%%%%%%%%%%%%%%%%%%%%%%%%%%%%%%%%%%%%%%%%%%%%%%%%%%%%%%%%%%%%%%%%%%%%%%%%%%%%%%%%%%%%%%%%%%%%%%%%%%%%%%%%%%%%%%%%%%%%%%%%%%%%%%%%%%%%%%%%%%%%%%%%%%%%%%%%%%%%%%%%%%%%%%%

\section{Proof of Theorem~\ref{thm:existence-intcurr-dimension2-intro}}\label{sec:2-dim-existence}

In this section we prove Theorem~\ref{thm:existence-intcurr-dimension2-intro} from the introduction. We mention that a version of this theorem for quasiconvex metric surfaces can easily be proved with elementary methods, see the proof of Theorem~\ref{thm:Lipschitz-Volume-rigidity-dim2-quasiconvex} below. The proof of the general case (without assuming quasiconvexity) crucially relies on the recent uniformization result \cite{Ntalampekos-Romney-22}. It moreover uses the following pushforward construction for currents under Sobolev maps. Let \(M\) be a closed, oriented, Riemannian \(n\)-manifold, \(X\) a complete  metric space and \(\rho\in W^{1, n}(M, X)\) a continuous Sobolev map. For every \((f, \pi)\in \mathcal{D}^n(X)\) we define 
\begin{equation}\label{eq:def-pushforward-Sobolev}
T(f,\pi)\coloneqq \int_M f\circ\rho \,\det\big(D(\pi\circ\rho)\big)\,d\hspace{-0.14em}\Haus^n,
\end{equation}
where $D(\pi\circ\rho)$ denotes the weak derivative. The next proposition shows that this formula defines an integral current in $X$. Notice that \eqref{eq:def-pushforward-Sobolev} is exactly the definition of the pushforward of $\bb{M}$ under $\rho$ in the case that $\rho$ is Lipschitz. 

\begin{proposition}\label{prop:Sobolev-pushforward-currents}
The functional $T$ defines an element of $\bI_n(X)$ with $\partial T=0$ and $$\norm{T}(A) \leq \vol^*(\rho|_{\rho^{-1}(A)})$$ for every Borel set $A\subset X$. Moreover, if $X$ is homeomorphic to $M$ and $\rho$ is a uniform limit of positively oriented homeomorphisms from $M$ to $X$ then $$\varphi_\#T = \deg(\varphi)\cdot\bb{M}$$ for every Lipschitz map $\varphi\colon X\to M$.
\end{proposition}

For more general results about pushing forward integral currents under Sobolev maps, proved with different methods, see the recent article \cite{Ikonen-2023}.

\begin{proof}
 % Denote the distance on $M$ with respect to the fixed Riemannian metric by $d_g$. 
Since $(M, d_g)$ has a $1$-Poincar\'e inequality it follows from \cite[Theorem 8.1.7]{Heinonen-Koskela-Shanmugalingam-Tyson-2015} that there exist $h\in L^p(M)$ and $B\subset M$ negligible such that $$d(\rho(z), \rho(w))\leq d_g(z, w)\cdot (h(z) + h(w))$$ for all $z,w\in M\setminus B$. For $k\in\N$ define $A_k= \{z\in M\setminus B: h(z)\leq k\}$ and $$\varepsilon_k:= \int_{M\setminus A_k} h^n\,d\Haus^n.$$ It follows from Chebyshev's inequality and the absolute continuity of the integral that $$\Haus^n(M\setminus A_k) \leq \frac{\varepsilon_k}{k^n}$$ and that $\varepsilon_k\to 0$ as $k\to \infty$.
Since the restriction of $\rho$ to $A_k$ is $(2k)$-Lipschitz there exists a $(2k)$-Lipschitz extension $\rho_k\colon M\to E(X)$ of $\rho|_{A_k}$. It is not difficult to show that $(\rho_k)$ converges uniformly to $\rho$ and, in particular, $\rho_k(M)\subset N_{\delta_k}(X)$ for some $\delta_k\to 0$.

Now, define $T_k:= \rho_{k\, \#}\bb{M}$ and notice that $T_k\in\bI_n(E(X))$ satisfies $\partial T_k=0$ and $\spt(T_k)\subset \bar{N}_{\delta_k}(X)$. We set furthermore $R_k:= \rho_{k\,\#}\bb{M\setminus A_k}$ and notice that $$T_k = \rho_{k\,\#}\bb{A_k} + R_k$$ as well as $$\mass(R_k)\leq \Lip(\rho_k)^n\cdot \Haus^n(M\setminus A_k) \leq 2^n\varepsilon_k.$$ In particular, for any Borel set $E\subset E(X)$ we have 
\begin{equation}\label{eq:mass-norm-T_k-Sobolev}
\norm{T_k}(E)\leq \norm{\rho_{k\,\#}\bb{A_k}}(E) + \norm{R_k}(E)\leq \vol^*(\rho|_{A_k\cap \rho^{-1}(E)}) + 2^n\varepsilon_k.
\end{equation}
Since by the absolute continuity of the integral we have $\vol^*(\rho|_{M\setminus A_k})\to 0$ it easily follows that $$T_k(f,\pi)\to T(f,\pi)$$ for every $(f,\pi)\in \mathcal{D}^n(X)$. Moreover, for $k, m\in\N$ with $k>m$ we have $$T_k-T_m = \rho_{k\,\#}\bb{A_k\setminus A_m} + R_k - R_m$$ and hence $$\mass(T_k- T_m) \leq \vol^*(\rho|_{A_k\setminus A_m}) + \mass(R_k) + \mass(R_m)\to 0$$ as $k, m\to \infty$. This shows that $(T_k)$ is a Cauchy sequence with respect to the mass norm. Since $T_k$ converges weakly to $T$ it follows that $T\in\bI_n(X)$ and $\partial T=0$. Furthermore, we deduce from \eqref{eq:mass-norm-T_k-Sobolev} and the convergence in mass that $$\norm{T}(A)\leq \vol^*(\rho|_{\rho^{-1}(A)})$$ for every Borel set $A\subset X$. 
 
Finally, suppose that $X$ is homeomorphic to $M$ and $\rho$ is in addition the uniform limit of positively oriented homeomorphisms from $M$ to $X$. Let $\varphi\colon X\to M$ be a Lipschitz map. Let $\bar{\varphi}\colon N_\delta(X)\to M$ be a Lipschitz extension of $\varphi$ to a small neighborhood $N_\delta(X)$ of $X$ in $E(X)$. Then, $\bar{\varphi}\circ\rho_k$ converges uniformly to $\varphi\circ\rho$ and hence $\deg(\bar{\varphi}\circ\rho_k) = \deg(\varphi)$ for all sufficiently large $k$. Since $\bar{\varphi}_\#T_k$ converges in mass to $\varphi_\#T$ and $(\bar{\varphi}\circ\rho_k)_\#\bb{M} = \deg(\bar{\varphi}\circ\rho_k)\cdot \bb{M}$ for every $k$ it follows that $\varphi_\#T = \deg(\varphi)\cdot\bb{M}$.
\end{proof}

Theorem~\ref{thm:existence-intcurr-dimension2-intro} is an almost immediate consequence of Proposition~\ref{prop:Sobolev-pushforward-currents} together with the recent uniformization result \cite{Ntalampekos-Romney-22}, see also \cite{Meier-Wenger, Ntalampekos-Romney-21} for earlier uniformization results applying to the special case of quasiconvex metric surfaces of finite Hausdorff measure.

\begin{proof}[Proof of Theorem~\ref{thm:existence-intcurr-dimension2-intro}]
By \cite[Theorem 1.3]{Ntalampekos-Romney-22} there exists a Riemannian metric $g$ on $M$ of constant curvature and a continuous, surjective, monotone map $\rho\colon M\to X$ which belongs to $W^{1,2}(M, X)$ and is weakly conformal in the sense that there exists $K>0$ such that the minimal weak upper gradient $g_\rho$ of $\rho$ satisfies $$g_\rho^2\leq K\cdot \mathbf{J}(\apmd\rho)$$ almost everywhere. We refer to \cite{Heinonen-Koskela-Shanmugalingam-Tyson-2015} for the definition of minimal weak upper gradient. It follows in particular that $\rho$ is the uniform limit of homeomorphisms from $M$ to $X$. We may assume these to be positively oriented.

By Proposition~\ref{prop:Sobolev-pushforward-currents} the functional $T\colon \mathcal{D}^2(X)\to\R$ defined by 
$$
T(f,\pi)\coloneqq \int_M f\circ\rho \,\det\big(D(\pi\circ\rho)\big)\,d\hspace{-0.14em}\Haus^n
$$ 
belongs to $\bI_2(X)$ and satisfies $\partial T=0$, as well as 
$$
\|T\|(A)\leq \vol^*(\rho|_{\rho^{-1}(A)})
$$ 
for every Borel set \(A\subset X\). By \cite[Lemma 7.8]{Ntalampekos-Romney-21} we have that $\#\{z\in M: \rho(z)=x\}=1$ for $\Haus^2$-almost every $x\in X$. Recall from the proof of Proposition~\ref{prop:Sobolev-pushforward-currents} that there exist measurable sets $A_1\subset A_2\subset \dots \subset M$ such that $\rho|_{A_k}$ is Lipschitz for every $k$ and $\bigcup A_k$ has full measure in $M$. The area formula for Lipschitz maps thus implies that for every Borel set $A\subset X$ we have $$\vol(\rho|_{\rho^{-1}(A)}) =\lim_{k\to\infty} \int_A\#\{z\in A_k: \rho(z) = x\}\,d\Haus^2(x) \leq \Haus^2(A).$$ We conclude that $$\norm{T}(A) \leq \vol^*(\rho|_{\rho^{-1}(A)})\leq C \vol(\rho|_{\rho^{-1}(A)})\leq C \Haus^2(A)$$ for every Borel set $A\subset X$, where $C$ is a universal constant.
 
Finally, Proposition~\ref{prop:Sobolev-pushforward-currents} shows that $\varphi_\#T = \deg(\varphi)\cdot \bb{M}$ for every Lipschitz map $\varphi\colon X\to M$. It now follows exactly as in the proof of Proposition~\ref{prop:T-is-generator} that $H^{\IC}_2(X)$ is infinite cyclic.
\end{proof}

%%%%%%%%%%%%%%%%%%%%%%%%%%%%%%%%%%%%%%%%%%%%%%%%%%%%%%%%%%%%%%%%%%%%%%%%%%%%%%%%%%%%%%%%%%%%%%%%%%%%%%%%%%%%%%%%%%%%%%%%%%%%%%%%%%%%%%%%%%%%%%%%%%%%%%%%%%%%%%%%%%%%%%%%%%%%%%%%%%%%%%%%%%%%%%%%%%%%%%%%%%%%%%%%%%%%%%%%%%%%%%%%%%%%%%%%%%%%%%%%%%%%%%%%%%%%%%%%%%%%%%%%%%%%%%%%%

\section{Relative isoperimetric and Poincar\'e inequalities}\label{sec:relative-isoperimetric-inequality} The purpose of this section is to prove the relative isoperimetric inequality stated in Theorem~\ref{thm:relative-isoperimetric-inequality}. At the end of the section, we also deduce Semmes' theorem about the validity of a Poincar\'e inequality; see Corollary~\ref{cor:Semmes-theorem-Poincare}. In preparation of the proof of Theorem~\ref{thm:relative-isoperimetric-inequality} we prove the following proposition.

\begin{proposition}\label{prop:good-retraction-property}
Let $Y$ be a complete metric space and $X\subset Y$ a non-empty closed subset of finite Nagata dimension. If $X$ is linearly locally contractible, then there exist $\lambda>0$, $C\geq 1$ and a continuous retraction $\pi\colon N_{\lambda\diam(X)}(X)\to X$ such that $$d(\pi(y), x)\leq C\cdot d(y,x)$$ for all $x\in X$ and $y\in N_{\lambda\diam(X)}(X)$. The constants $\lambda, C$ only depend on the data of $X$.
\end{proposition}

The proof is similar to the first part of the proof of Lemma~\ref{lem:good-continuous-extension-LLC}. 

\begin{proof}
We denote by $k\geq 0$ the Nagata dimension of $X$. By the proof of \cite[Theorems $1.6$ and $5.2$]{lang-2005} there exists a $(k+1)$-dimensional simplicial complex $\Sigma$, a continuous map $g \colon Y \setminus X \to \Sigma$ with $\Hull (g(Y \setminus X)) = \Sigma$ and a map $h\colon \Sigma^{(0)} \to X$, defined on the $0$-skeleton of $\Sigma$, such that the following holds. For every simplex $\sigma \subset \Sigma$ and every vertex $v \in \sigma$ we have $$d(y,h(v)) \leq C d(y,X)$$
for all $y \in g^{-1}(\st \sigma)$, where $C>0$ only depends on the data of $X$. The relevant notation concerning simplicial complexes can be found in \cite[Section 2.2]{basso2021undistorted}. We denote by $\Lambda\geq 1$ the linear local contractibility constant of $X$ and set $\lambda \coloneqq [3 C (2\Lambda)^{k+2}]^{-1}$.
We replace $Y$ by $\bar{N}_{\lambda \diam X}(X)$ and $\Sigma$ by $\Hull (g(Y \setminus X))$. Then, for every simplex $\sigma \subset \Sigma$ and each $y \in Y \setminus X$ with $g(y) \in \st \sigma$, 
$$        
\diam h(\sigma^{(0)}) \leq 2Cd(y,X) \leq (2\Lambda)^{-(k+2)}\diam X.
$$
Thus, we can inductively extend $h$ to $\Sigma^{(\ell)}$ for $\ell=1,\ldots,k+1$ to obtain a map $h\colon \Sigma \to X$ with
\(\diam h(\sigma) \leq C (2\Lambda)^{k+2} d(y,X)\)
for every simplex $ \sigma \subset \Sigma$ and each $y \in Y \setminus X$ with $g(y) \in \st \sigma$. Now, let $y \in Y \setminus X$ and $\sigma \subset \Sigma$ be the minimal simplex with $g(y) \in \interior \sigma$. Further, let $v \in \sigma$ be some vertex and $x \in X$. Then 
\[
d\big(h(g(y)),x\big)\leq d\big(h(g(y)),h(v)\big) + d(h(v),y) + d(y,x)
\]
and we conclude
\[
d\big(h(g(y)),x\big)\leq \big(C(2\Lambda)^{k+2} + C +1 \big)\cdot d(y,x).
\]
We define the retraction $\pi \colon Y \to X$ as $h \circ g$ on $Y\setminus X$ and $\id_X$ on $X$.
\end{proof}

We now turn to the proof of the relative isoperimetric inequality and let \(X\) be as in Theorem~\ref{thm:relative-isoperimetric-inequality}. Since \(X\) is Ahlfors \(n\)-regular, and thus doubling, it follows that \(X\) satisfies \(\text{Nagata}(N,2)\) for some \(N\in \N\) depending only on the doubling constant of \(X\). We remark that even \(\dim_N(X)=n\), but we will not need this below. We proceed by fixing some notation.  By applying  Proposition~\ref{prop:good-retraction-property} with \(Y=E(X)\), we find that there exist constants \(\lambda>0\), \(C_1\geq 1\) depending only on the data of \(X\) such that there is a continuous retraction
\[
\pi \colon \bar{N}_{\lambda \diam(X)}^{E(X)}(X)\to X
\]
satisfying \(d(\pi(y), x) \leq C_1 \cdot d(y, x)\) for all \(x\in X\) and all \(y\in \bar{N}_{\lambda \diam(X)}^{E(X)}(X)\). Let \(\rho \colon X\to M\) be a homeomorphism of degree one to a closed, oriented, Riemannian \(n\)-manifold \(M\). By Theorem~\ref{thm:main}, there exists \(C_2\geq 1\) depending only on \(n\) and a cycle \(T\in \bI_n(X)\) with \(\norm{T} \leq C_2 \cdot\Haus^n\) such that if \(\varphi\colon X \to M\) is a Lipschitz map of degree one, then \(\varphi_\# T=\bb{M}\).

Throughout this section, we fix a Borel set \(E\subset X\) and let \(B=B(x_0, R)\) be an open ball of radius \(0< R < 2 \diam X\). We claim that
\begin{equation}\label{eq:main-goal-section-6}
\min\bigl\{\Haus^n\big(E\cap B\big), \Haus^n\big(B \setminus E\big) \bigr\} \leq C \cdot\mathscr{M}_-\big( E\, | \, 6 B\big)^{\frac{n}{n-1}}
\end{equation}
for some constant \(C\) depending only on the data of \(X\). We set \(D=\max\{ D_{n-1}, D_{n-2}\}\), where  \(D_k\) denotes the constant from Theorem~\ref{thm:filling-inj-spaces}, and we fix \(\epsilon_0>0\) sufficiently small such that
\begin{equation}\label{ef:def-of-epsilon-0}
(16 D^2 C_2)\cdot \epsilon_0^{\frac{1}{n-1}} \leq \min\Big\{ \lambda, \frac{1}{100\cdot C_1 }\Big\}.
\end{equation}
We may  assume that \(\Haus^n(E\cap B)\), \(\Haus^n(B\setminus E)\neq 0\) as well as 
\begin{equation}\label{eq:assumption-on-P}
P\coloneqq \mathscr{M}_-(E\, |\ 6B)\leq  \epsilon_0 R^{n-1}.
\end{equation}
Hence, it follows that $\Haus^n\big((\partial E \setminus E)\cap 6B\big) = 0$. Indeed,  $E_r\setminus E$ contains $\partial E\setminus E$ and if $\Haus^n\big((\partial E\setminus E\big)\cap 6B)>0$ then $\mathscr{M}_-(E\,|\, 6B) = \infty$. Thus, after replacing \(E\) by its closure, we may assume that \(E\) is compact. In particular, we have \(\Haus^n(B\setminus E_r)\to \Haus^n(B\setminus E) \) as \(r\searrow 0\). 

For almost every \(s\in \R\), we know that \(\partial(T\on E_s)\) is an integral \((n-1)\)-current in \(X\) whose support is contained in \(\{ x\in X : d(x, E)=s\}\), and
\[
\int_0^r \norm{\partial(T\on E_s)}(6B)\, ds \leq \norm{T}\big((E_r \setminus E)\cap 6B\big)
\]
for all \(r>0\). Let \(0 < \delta <\min\big\{C_2\epsilon_0 R^{n-1}, 1\big\}\). The parameter \(\delta\) is introduced mainly to account for the possibility that \(P=0\). By the above, there exists \(r>0\) arbitrarily small such that
\(T\on E_r\in \bI_n(X)\) with \(\spt \partial(T\on E_r)\subset \{ x\in X : d(x, E)=r\}\), and
\begin{equation}\label{eq:mass-partial-V}
\norm{\partial(T\on E_r)}(6B) \leq C_2 P+\delta,
\end{equation}
as well as \(\Haus^n(B\setminus E)\leq 2\Haus^n(B\setminus E_{3r})\) and \(\Haus^{n-1}\big(\{ x\in X : d(x, E)=r\}\big)<\infty\). We abbreviate 
\[
V=\partial(T\on E_r)
\]
and let \(R'\in (5R, 6R)\) be such that \(V\on B(x_0, R')\in \bI_{n-1}(X)\) and the support of the current \(\partial(V\on B(x_0, R'))\) is contained in \(S(x_0, R')=\{ x\in X : d(x_0, x)=R' \}\).

By Theorem~\ref{thm:filling-inj-spaces}, there exists a minimal filling \(U\in \bI_{n-1}(E(X))\) of \(\partial(V\on B(x_0, R'))\) and any such \(U\) satisfies 
\[
\spt U \subset N_{D\mass(U)^{\frac{1}{n-1}}}(\spt \partial U).
\]
Since \(U\) is a minimal filling, 
\[
\mass(U) \leq \mass(V \on B(x_0, R')) \leq \norm{V}(6B)\leq C_2P+\delta,
\]
where we used \eqref{eq:mass-partial-V} for the last inequality.
Thus, because of \eqref{eq:assumption-on-P} and the definition of \(\delta\), we obtain 
\begin{equation}\label{eq:support-of-U-is-close}
\spt U \subset N_{\epsilon_1 R}(\spt \partial U),
\end{equation}
where \(\epsilon_1=D\cdot(2 C_2 \epsilon_0)^{\frac{1}{n-1}}\). Since \(\spt \partial U\subset S(x_0, R')\) and \(\epsilon_1<1\), it follows that 
\begin{equation}\label{eq:spt-of-U-does-not-meet-4R-ball}
\spt U \cap B^{E(X)}(x_0, 4R)=\varnothing.
\end{equation}
Now, we consider the integral \((n-1)\)-cycle \(W=V\on B(x_0, R')-U\) in \(E(X)\). It follows that \(\mass(W)\leq 2(C_2 P+\delta)\) as well as \(\spt W \subset N_{\epsilon_1 R}(X)\). Using Theorem~\ref{thm:filling-inj-spaces} once again, we find a minimal filling \(S\in \bI_n(E(X))\) of \(W\) satisfying \(\mass(S)\leq D \mass(W)^{\frac{n}{n-1}}\) and \(\spt S \subset N_{D \mass(S)^{\frac{1}{n}}}(\spt W)\). It follows that 
\begin{equation}\label{eq:spt-of-S-is-close}
\spt S \subset N_{\epsilon_2 R}(\spt W),
\end{equation}
where \(\epsilon_2=D^{\frac{n+1}{n}}\cdot (4 C_2 \epsilon_0)^{\frac{1}{n-1}}\), and thus \(\spt S \subset N_{(\epsilon_1+\epsilon_2)R}(X)\). Moreover, we know that if \(y\in \spt S\) and \(0 \leq s \leq d(y, \spt W)\), then
\begin{equation}\label{eq:lower-bound-for-mass-of-S}
\norm{S}(B(y, s))\geq D^{-(n-1)} s^n.
\end{equation}
Let 
\[
A=(\spt U \cap X) \cup \big\{x\in X : d(x, E)=r\big\}.
\]
Then \(A\subset X\) is non-empty and compact, \(\dim_N(A)\leq N\), and \(\spt W \subset A \cup \spt U\). Notice that \(\Haus^{n-1}(A)<\infty\). Indeed,  by our assumption on \(r\), the Hausdorff $(n-1)$-measure of $\{ x\in X : d(x, E)=r\}$ is finite, and since \(U\) is a minimal filling, it follows that \(\spt U \setminus \spt \partial U \subset \set U\) and thus 
\[
\Haus^{n-1}(\spt U)\leq \Haus^{n-1}(\spt \partial U)+\Haus^{n-1}(\set U) < \infty,
\]
where we used that \(\spt \partial U \subset \spt V \subset \{ x\in X : d(x, E)=r\}\). Set \(a = \frac{1}{4}\) and \(b=20\). By applying Lemma~\ref{lem:Nagata-cover} to \(\spt S \cup A\), we find that there exists \(F\subset \spt S \setminus A\) such that
\[
\spt S\setminus A \subset \bigcup_{y\in F} B^{E(X)}(y, b\cdot r_y),
\]
where \(r_y=d(y, A)\), and \(\{ B^{E(X)}(y, a\cdot r_y)\}_{y\in F}\) has multiplicity at most \(L\) for some \(L\geq 1\) depending only on the data of $X$. 
Since \(A\) is compact and \(\Haus^{n}(A)=0\), there exist open balls \(B(z_i, s_i)\), \(i=1, \ldots, i_0\), with \(z_i\in A\), such that their union covers \(A\) and 
\begin{equation}\label{eq:sum-of-s-i-is-small}
\sum_{i=1}^{i_0} s_i^n \leq \delta.
\end{equation}
Hence, as \(E(X)\) is compact and thus \(\spt S\) is compact as well, there exist \(y_1, \ldots, y_{j_0}\in F\) such that
\begin{equation}\label{eq:containment-of-spt-of-S}
\spt S \subset \bigcup_{i=1}^{i_0} B^{E(X)}(z_i, s_i)\cup \bigcup_{j=1}^{j_0} B^{E(X)}(y_j, b\cdot r_j),
\end{equation}
where \(r_j=d(y_j, A)\). For later use, we record that the \(r_j\)'s are uniformly bounded by \(R\).

\begin{lemma}\label{lem:unifrom-bound-of-the-rjs}
 For every \(j=1, \ldots, j_0\), we have \(r_j \leq (\epsilon_1+\epsilon_2) R\). 
\end{lemma}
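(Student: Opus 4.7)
The plan is to show the containment $\spt S \subset N_{(\epsilon_1+\epsilon_2)R}(A)$, which will immediately give the bound $r_j=d(y_j, A) < (\epsilon_1+\epsilon_2)R$ for every $y_j \in F \subset \spt S$.

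First I would unpack the support of $W$. Since $W = V\on B(x_0,R') - U$ by construction, the support of $W$ is contained in $\spt V \cup \spt U$. But $\spt V \subset \{x\in X : d(x,E)=r\}$ by the choice of $r$, and this set is, by definition, part of $A$. Hence $\spt W \subset A \cup \spt U$.

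Next, I would use the estimate \eqref{eq:support-of-U-is-close} together with the fact that $\spt \partial U \subset \spt V \subset \{x\in X : d(x,E)=r\} \subset A$ to conclude that $\spt U \subset N_{\epsilon_1 R}(\spt \partial U) \subset N_{\epsilon_1 R}(A)$. Combining this with the previous inclusion gives
\[
\spt W \subset A \cup \spt U \subset N_{\epsilon_1 R}(A).
\]

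Finally, invoking \eqref{eq:spt-of-S-is-close} yields
\[
\spt S \subset N_{\epsilon_2 R}(\spt W) \subset N_{\epsilon_2 R}\bigl(N_{\epsilon_1 R}(A)\bigr) \subset N_{(\epsilon_1+\epsilon_2)R}(A).
\]
Since $y_j\in F \subset \spt S\setminus A$, this forces $r_j = d(y_j, A) < (\epsilon_1+\epsilon_2)R$, as required. No step here is really an obstacle; the argument is a short chase through the definitions using the two neighborhood estimates already established for $\spt U$ and $\spt S$.
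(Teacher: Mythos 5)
Your proof is correct and follows essentially the same route as the paper's, just packaged more globally: you prove the single inclusion $\spt S \subset N_{(\epsilon_1+\epsilon_2)R}(A)$ by chaining $\spt S\subset N_{\epsilon_2 R}(\spt W)$, $\spt W\subset A\cup\spt U$, and $\spt U\subset N_{\epsilon_1 R}(\spt\partial U)\subset N_{\epsilon_1 R}(A)$, whereas the paper fixes $j$, picks a nearest point $x\in\spt W$ to $y_j$, and case-splits on $x\in A$ versus $x\in\spt U\setminus A$ — the same inclusions in a pointwise formulation.
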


\begin{proof}
 Fix \(j\) and let \(x\in \spt W\) be such that \(d(y_j, x)=d(y_j, \spt W)\). By \eqref{eq:spt-of-S-is-close}, it follows that \(d(y_j, x)\leq \epsilon_2 R\). If \(x\in A\), then \(d(y_j, A) \leq d(y_j, x)\)
and so \(r_j \leq \epsilon_2 R\), as desired. Therefore, we are left to consider the case when \(x\notin A\).  If \(x\notin A\), then \(x\in \spt U \setminus X\) and due to \eqref{eq:support-of-U-is-close} there exists \(x'\in \spt \partial U \) such that \(d(x', x) \leq \epsilon_1 R\). Notice that \(\spt \partial U \subset\spt V \subset \{ x\in X : d(x, E)=r\} \subset A\) and thus \(x'\in A\). Hence, 
\[
d(y_j, x')\leq d(y_j, x)+d(x, x')\leq (\epsilon_1+\epsilon_2) R
\]
and so \(r_j=d(y_j, A)\leq d(y_j, x') \leq (\epsilon_1+\epsilon_2) R\), as was to be shown.
\end{proof}
We abbreviate
\[
h=\frac{1}{3}\min\big\{R, r, s_1, \ldots, s_{i_0}, r_1, \ldots, r_{j_0}\big\} 
\]
and let \(\epsilon>0\) be as in Lemma~\ref{lem:technical-inclusions-approx-homeo} applied to our fixed homeomorphism \(\rho\colon X \to M\). There exists $\varepsilon'>0$ such that 
$
N_{\varepsilon'}(\varrho(A'))\subset \varrho(N_h(A'))
$ 
for every $A'\subset X$. Let $$\varphi\colon \bar{N}_{\lambda\diam X}^{E(X)}(X)\to M$$ be a Lipschitz map satisfying 
$
d(\varphi, \varrho\circ\pi)<\min\{\varepsilon, \varepsilon'\}
$
and $\varphi_\#T = \bb{M}$. We consider the integral $n$-current in $M$ given by $Q:= \varphi_\#(T\on E_r - S)$ and claim that \(\spt(\partial Q)\subset \varrho(X\setminus B(x_0, 2R))\). In order to prove this, notice first that 
$$
\pi\big(N_{\varepsilon_1R}^{E(X)}(X)\setminus B(x_0, 4R)\big)\subset X\setminus B(x_0, 3R),
$$ 
as is easy to prove. Since $\partial(T\on E_r-S)=V-W = V\on(X\setminus B(x_0, R')) + U$
it moreover follows that 
$$
\spt(\partial(T\on E_r - S))\subset N_{\varepsilon_1R}^{E(X)}(X)\setminus B(x_0, 4R).
$$ 
Consequently, we obtain 
\begin{align*}
 \spt(\partial Q)&\subset N_{\varepsilon'}\bigl(\varrho(\pi(\spt \partial(T\on E_r- S)))\bigr)\\
 &\subset \varrho\big(N_h(X\setminus B(x_0, 3R))\big)\subset \varrho\big(X\setminus B(x_0, 2R)\big),
\end{align*}
proving the claim. It therefore follows from the constancy theorem, see \cite[Corollary~3.13]{federer-1960}, that 
\[
Q\on \rho(B)=c\, \bb{M}\on \rho(B)
\]
for some \(c\in \Z\). The following two possible inclusions result from considering the cases \(c=0\) and \(c\neq 0\) separately.

\begin{lemma}\label{lem:cases-constancy-theorem}
We have $B \cap E_{r-h}\subset N_h(\pi(\spt S))$ or $B\setminus E_{3r}\subset N_h(\pi(\spt S))$.
\end{lemma}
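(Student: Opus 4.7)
The plan is to split into two cases according to the integer $c$ produced by the constancy theorem and derive each inclusion by a short support argument on currents restricted to $\rho(B)$.

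The starting point is the decomposition $\bb{M}=\varphi_\#T=\varphi_\#(T\on E_r)+\varphi_\#(T\on(X\setminus E_r))$. Combined with $Q\on\rho(B)=c\,\bb{M}\on\rho(B)$ this produces the identity
\[
(1-c)\,\bb{M}\on\rho(B) \;=\; \varphi_\#S\on\rho(B) + \varphi_\#(T\on(X\setminus E_r))\on\rho(B),
\]
which will drive the case $c\neq 1$. For $c=1$ the dual form $\bb{M}\on\rho(B)=\varphi_\#(T\on E_r)\on\rho(B)-\varphi_\#S\on\rho(B)$ is used instead. In both cases the left-hand side has support equal to $\overline{\rho(B)}$, because the mass measure of $\bb{M}$ is the Riemannian volume on $M$ and hence has full support.

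The three relevant supports on the right can be located using $d(\varphi,\rho\circ\pi)<\min\{\epsilon,\epsilon'\}$ together with Lemma~\ref{lem:technical-inclusions-approx-homeo} and our choice $N_{\epsilon'}(\rho(A))\subset\rho(N_h(A))$. One reads off
\[
\spt\varphi_\#S\subset\rho(N_h(\pi(\spt S))),\quad \spt\varphi_\#(T\on(X\setminus E_r))\subset\rho(X\setminus E_{r-h}),
\]
and $\spt\varphi_\#(T\on E_r)\subset\rho(E_{r+h})$. Comparing supports in the key identity for $c\neq 1$ and passing through the homeomorphism $\rho$ yields $B\subset N_h(\pi(\spt S))\cup(X\setminus E_{r-h})$, which is the first claimed inclusion. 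For $c=1$ the dual identity gives $B\setminus E_{r+h}\subset N_h(\pi(\spt S))$, and since $h\leq r/3$ one has $r+h\leq 4r/3\leq 3r$, so $E_{r+h}\subset E_{3r}$ and the second inclusion follows.

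The only delicate point is ensuring that the various thickenings stay inside open, rather than closed, $h$-neighborhoods when one takes supports and closures; this is a routine bookkeeping matter resolved by carrying out the construction with a slightly smaller approximation parameter than the one that would produce the target $h$ exactly. Once this is arranged, no further analytic input is needed: the argument is a purely topological consequence of the current identities and support inclusions already set up.
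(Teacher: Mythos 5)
Your proof is correct and rests on the same core ingredients as the paper's: the constancy theorem gives an integer $c$, and in each case one compares supports in a current identity restricted to $\rho(B)$, exploiting that $\bb{M}$ has full support while the images of $\spt S$, $T\on E_r$, and $T\on (X\setminus E_r)$ under $\varphi$ are located via Lemma~\ref{lem:technical-inclusions-approx-homeo} and the choice of $\epsilon'$. The one genuine difference from the paper is the case split: you distinguish $c=1$ from $c\neq 1$ (isolating the coefficient $1-c$ in front of $\bb{M}\on\rho(B)$), whereas the paper distinguishes $c=0$ from $c\neq 0$ (observing that $c\bb{M}$ has full support when $c\neq 0$, so that $\rho(X\setminus E_{3r})\subset\spt(\varphi_\#(T\on E_r)-c\bb{M})$, while for $c=0$ it argues directly that $\rho(E_{r-h})\subset\spt\varphi_\#(T\on E_r)$). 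Both case splits cover all $c$ and produce the disjunction in the statement; your version and the paper's assign the two inclusions to the two cases in opposite ways, but this is purely cosmetic. The ``delicate point'' you flag about $\spt\varphi_\#(T\on E_r)$ landing in $\rho(E_{r+h})$ versus its closure is real but benign: since you ultimately need only $E_{3r}$ and $h\leq r/3$, you can bypass the closure bookkeeping entirely by running the chain $\varphi(\overline{E_r})\subset\varphi(E_{2r})\subset\rho(E_{2r+h})\subset\rho(E_{3r})$ as the paper does, with no need to tinker with the approximation parameter.
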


\begin{proof}
To begin notice that $$\varphi(\spt S)\subset N_{\varepsilon'}(\varrho(\pi(\spt S)))\subset \varrho(N_h(\pi(\spt S)))$$ and
\[
\big(\varphi_{\#}S\big)\on \rho(B)=\big(\varphi_{\#}(T\on E_r)-c\bb{M}\big)\on \rho(B).
\]
Suppose now that \(c\neq 0\). Since \(\varphi(\overline{E_r})\subset \varphi(E_{2r})\subset \rho(E_{2r+h})\subset \rho(E_{3r})\), it follows that \(\rho(X\setminus E_{3r})\subset \spt\big(\varphi_{\#}(T\on E_r)-c\bb{M}\big) \) and therefore
$$\rho(B \setminus E_{3r})\subset\spt\big(\varphi_{\#}(T\on E_r)-c\bb{M}\big)\cap \rho(B)\subset \varphi(\spt S)\subset \varrho(N_h(\pi(\spt S))),$$
which shows that \(B\setminus E_{3r}\subset N_h(\pi(\spt S))\) if \(c\neq 0\). Second, we suppose that \(c=0\). Since \(\varphi(X \setminus E_r)\subset  \rho(X \setminus E_{r-h})\) and  \(\varphi_{\#}(T\on E_r)=\bb{M}-\varphi_{\#}(T\on (X\setminus E_r))\), we find that \(\rho(E_{r-h})\subset \spt \varphi_{\#}(T\on E_r)\). Hence, because \(\rho(B \cap E_{r-h})=\rho(E_{r-h}) \cap \rho(B)\), the same argument as above shows that \(\rho(B \cap E_{r-h})\subset \varphi(\spt S)\subset \varrho(N_h(\pi(\spt S)))\), which shows that $B \cap E_{r-h}\subset N_h(\pi(\spt S))$ in case $c=0$.
\end{proof}

Let \(x_j\in A\) be such that \(d(x_j, y_j)\leq 2r_j\) for all \(j=1, \ldots, j_0\). Since \(\pi(B^{E(X)}(y_j, b r_j))\subset B(x_j, 3 C_1 b r_j)\) and \(\pi(B^{E(X)}(z_i, s_i))\subset B(z_i, C_1 s_i)\), it follows from \eqref{eq:containment-of-spt-of-S} and the choice of $h$ that 
\begin{equation}\label{eq:pi-spt-S-in-union-of-balls}
    N_h(\pi(\spt S))\subset \bigcup_{i=1}^{i_0} B(z_i, 2C_1 s_i) \cup \bigcup_{j=1}^{j_0} B(x_j, 4 C_1 b r_j).
\end{equation}
Suppose \(j\) is such that \(d(y_j, x_0)>3R\). Using Lemma~\ref{lem:unifrom-bound-of-the-rjs}, we get \(d(x_j, x_0)> 3R-2 r_j\geq (5/2) R\), since \(2(\epsilon_1+\epsilon_2) \leq 1/2\). In particular,
\[
 B(x_j, 4 C_1 b r_j) \cap B(x_0, 2R)=\varnothing,
\]
as \(4 C_1 b\cdot (\epsilon_1+\epsilon_2) \leq 1/2\). It follows with the above that $N_h(\pi(\spt S))\cap 2B\subset A_*$, where
$$
A_*= \bigcup_{i=1}^{i_0} B(z_i, 2C_1 s_i) \,\cup \bigcup_{j \, :\, d(y_j, x_0)\leq 3R} B(x_j, 4 C_1 b r_j).
$$
In the following lemma we give an upper bound on the Hausdorff \(n\)-measure of \(A_*\). 

\begin{lemma}\label{lem:upperbound-of-A-star}
 There exists a constant \(C_3>0\) depending only on the data of \(X\) such that
\[
\Haus^n(A_*)\leq C_3\cdot P^{\frac{n}{n-1}}+C_3\delta.
\]
\end{lemma}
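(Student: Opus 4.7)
The plan is to convert the desired bound on $\Haus^n(A_*)$ into a sum estimate via Ahlfors $n$-regularity, and then control that sum using the lower mass density of $S$ combined with the bounded-multiplicity covering.

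First, I would use Ahlfors $n$-regularity to bound each ball in the definition of $A_*$. This yields
\[
\Haus^n(A_*) \leq \alpha (2 C_1)^n \sum_{i=1}^{i_0} s_i^n + \alpha (4 C_1 b)^n \!\!\sum_{j : d(y_j, x_0) \leq 3R}\!\! r_j^n,
\]
and by \eqref{eq:sum-of-s-i-is-small} the first sum contributes at most $\alpha (2C_1)^n \delta$. Hence everything reduces to showing that $\Sigma := \sum_{j : d(y_j, x_0) \leq 3R} r_j^n \leq C \bigl(P^{\frac{n}{n-1}} + \delta\bigr)$ for some constant $C$ depending only on the data.

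The second step is to use the filling inclusion \eqref{eq:support-of-U-is-close} together with $\spt \partial U \subset \spt V \subset \{x \in X : d(x,E) = r\} \subset A$ to conclude $\spt W \subset N_{\epsilon_1 R}^{E(X)}(A)$, so that $d(y_j, \spt W) \geq r_j - \epsilon_1 R$ for every $j$. I would then apply the lower density bound \eqref{eq:lower-bound-for-mass-of-S} at the scale $t_j := \min(a r_j, d(y_j, \spt W))$ to obtain
\[
\norm{S}\bigl(B^{E(X)}(y_j, t_j)\bigr) \geq D^{-(n-1)} t_j^n,
\]
and use the fact that the balls $\{B^{E(X)}(y_j, a r_j)\}$ have multiplicity at most $L$ to sum up: $\sum_j t_j^n \leq D^{n-1} L \, \mass(S)$. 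Splitting into the two ranges where $r_j \geq (4/3) \epsilon_1 R$ (in which case $t_j = a r_j$) and $r_j < (4/3) \epsilon_1 R$, the first range yields $\sum_j r_j^n \leq 4^n D^{n-1} L \, \mass(S)$, and $\mass(S) \leq D \mass(W)^{n/(n-1)} \leq D (2 C_2 P + 2\delta)^{n/(n-1)} \leq C' (P^{n/(n-1)} + \delta)$ by the isoperimetric inequality in Theorem~\ref{thm:filling-inj-spaces} together with $\mass(W) \leq 2(C_2 P + \delta)$ and $\delta \leq 1$ (to absorb $\delta^{n/(n-1)}$ into $\delta$).

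The main obstacle is the small-radius regime $r_j < (4/3)\epsilon_1 R$, where $d(y_j, \spt W)$ can be arbitrarily small and the direct density lower bound at scale $a r_j$ degenerates. Here the key extra observation is that the nearest point of $\spt W$ to such a $y_j$ cannot lie in $A$ (otherwise one would get $r_j < a r_j$), and hence $d(y_j, \spt U \setminus A) < a r_j < \epsilon_1 R/3$. The number of such indices $j$ with $d(y_j, x_0) \leq 3R$ should then be controlled by a covering argument on $\spt U$, using the $(n-1)$-dimensional lower density estimate for $\norm{U}$ from Theorem~\ref{thm:filling-inj-spaces} together with $\mass(U) \leq C_2 P + \delta$; this yields a bound of the form $\#\{j \text{ bad}\} \cdot (\epsilon_1 R)^n \leq C''(P^{n/(n-1)} + \delta)$, which is exactly what is needed. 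Combining the two cases and tracing through the constants gives the desired inequality with $C_3$ depending only on $\alpha$, $C_1$, $C_2$, $D$, $L$, $b$, $n$, and hence only on the data of $X$.
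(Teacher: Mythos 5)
Your overall strategy — reduce to $\sum_j r_j^n$ by Ahlfors regularity, then invoke the lower mass density of $S$ at scale $a r_j$ and sum using the multiplicity bound $L$ — is exactly the paper's. The gap is in how you justify that the density bound \eqref{eq:lower-bound-for-mass-of-S} actually applies at scale $a r_j$, i.e.\ that $a r_j \le d(y_j, \spt W)$. You only use the weak inclusion $\spt W \subset N_{\epsilon_1 R}(A)$, which gives $d(y_j,\spt W)\ge r_j-\epsilon_1 R$ and leaves you with a ``small-radius regime'' to patch. The paper instead proves the clean claim \eqref{eq:assumption-on-r-j}, $r_j \le d(y_j,\spt W)$ for \emph{every} $j$ with $d(y_j,x_0)\le 3R$, by combining $\spt W\subset A\cup\spt U$ with the crucial fact \eqref{eq:spt-of-U-does-not-meet-4R-ball} that $\spt U$ misses $B^{E(X)}(x_0,4R)$ entirely; hence $d(y_j,\spt U)\ge R$ while $d(y_j,A)=r_j\le(\epsilon_1+\epsilon_2)R\le R$ by Lemma~\ref{lem:unifrom-bound-of-the-rjs}, so the nearest point of $\spt W$ to $y_j$ always lies in $A$ and $d(y_j,\spt W)\ge r_j\ge a r_j$. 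There is no bad regime. You overlooked \eqref{eq:spt-of-U-does-not-meet-4R-ball}, which is why you were forced to improvise.

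Your proposed patch for the bad regime is also not correct as written. The assertion ``the nearest point of $\spt W$ to such a $y_j$ cannot lie in $A$ (otherwise one would get $r_j < a r_j$)'' is false: if the nearest point lies in $A$, then $d(y_j,\spt W)=r_j>a r_j$, which is consistent and in fact good, not contradictory. The real problem cases would be those with $d(y_j,\spt W)<a r_j$ (forcing the nearest point into $\spt U$), which is not the same as $r_j<(4/3)\epsilon_1 R$. Moreover the subsequent counting via the $(n-1)$-density of $\norm{U}$ would only control $\sum r_j^{n-1}$, not $\sum r_j^n$, and the stated bound $\#\{j \text{ bad}\}\cdot(\epsilon_1 R)^n\lesssim P^{n/(n-1)}+\delta$ does not follow from what you sketch, since the bad $r_j$ are not all comparable to $\epsilon_1 R$. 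All of this machinery is unnecessary once one invokes \eqref{eq:spt-of-U-does-not-meet-4R-ball}; this is precisely why the auxiliary filling $U$ was introduced in the first place, to push the part of $\spt W$ coming from $V\on B(x_0,R')$'s sphere-boundary away from the ball $B(x_0,4R)$.
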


\begin{proof}
We claim that 
\begin{equation}\label{eq:assumption-on-r-j}
r_j \leq d(y_j, \spt \partial S)
\end{equation}
for all \(j\) with \(d(y_j, x_0)\leq 3R\). Suppose for the moment that \eqref{eq:assumption-on-r-j} holds true. Then, by the use of \eqref{eq:lower-bound-for-mass-of-S}, we get
\[
\sum_{j : d(y_j, x_0)\leq 3R} r_j^n \leq (D/a)^n\sum_{j : d(x_j, x_0)\leq 3R} \norm{S}(B(y_j, ar_j)) \leq L (D/a)^n \mass(S)
\]
and thus using that for every ball \(B(x, r)\), we have \(\Haus^n(B(x, r))\leq \alpha r^n\) for some uniform constant \(\alpha\), it follows that
\[
\Haus^n(A_*)\leq \alpha (2 C_1)^n \sum_{i=1}^{i_0} s_i^n+\alpha L  (4b C_1 D/a)^n \mass(S).
\]
By construction, \(\mass(S)\leq D \mass(W)^{\frac{n}{n-1}} \leq D (4C_2)^{\frac{n}{n-1}}\big(P^{\frac{n}{n-1}}+\delta \big)\), where we used the convexity of the function \(x\mapsto x^{\frac{n}{n-1}}\) and that \(\delta<1\). Hence, because of \eqref{eq:sum-of-s-i-is-small}, we arrive at
\[
\Haus^n(A_*)\leq C_3\big(P^{\frac{n}{n-1}}+\delta \big),
\]
as desired. To finish the proof it thus remains to show \eqref{eq:assumption-on-r-j}. Fix \(j\) with \(d(y_j, x_0)\leq 3R\). Due to \eqref{eq:spt-of-U-does-not-meet-4R-ball}, it follows that \(R \leq d(y_j, \spt U)\). Recall that \(\partial S=W\). Since \(\spt W \subset A\cup \spt U \), 
\begin{equation}\label{eq:auxiliary-22}
\min\{ d(y_j, A), d(y_j, \spt U)\} \leq d(y_j, \spt W).
\end{equation}
By Lemma~\ref{lem:unifrom-bound-of-the-rjs}, it follows that \(d(y_j, A) \leq (\epsilon_1+\epsilon_2) R \leq R \leq d(y_j, \spt U)\). Therefore, due to \eqref{eq:auxiliary-22}, we find that \(r_j \leq d(y_j, \spt W)\), as claimed.
\end{proof}

Now, we are in a position to finish the proof of Theorem~\ref{thm:relative-isoperimetric-inequality}. By Lemma~\ref{lem:cases-constancy-theorem} and \eqref{eq:pi-spt-S-in-union-of-balls}, it follows that
\[
B \cap E_{r-h} \subset A_* \quad \text{ or } \quad B\setminus E_{3r} \subset A_*.
\]
Hence, by the use of Lemma~\ref{lem:upperbound-of-A-star},
\[
\min\bigl\{\Haus^n(B \cap E_{r-h}), \Haus^n(B \setminus E_{3r}) \bigr\} \leq C_3 \cdot P^{\frac{n}{n-1}}+C_3\delta,
\]
and so by our assumptions on \(r\) and since \(\delta\) can be made arbitrarily small this yields \eqref{eq:main-goal-section-6} and completes the proof of Theorem~\ref{thm:relative-isoperimetric-inequality}.

We finally provide the proof of Semmes' theorem about the validity of a Poincar\'e inequality.

\begin{proof}[Proof of Corollary~\ref{cor:Semmes-theorem-Poincare}]
 Let $X$ be as in the statement of the corollary. It follows from Theorem~\ref{thm:relative-isoperimetric-inequality} and the Ahlfors $n$-regularity of $X$ that there exist constants $C,\lambda\geq 1$ such that $X$ satisfies the strong relative isoperimetric inequality $$\min\bigl\{\Haus^n\big(E\cap B\big), \Haus^n\big(B \setminus E\big) \bigr\} \leq C R\cdot  \mathscr{M}_-\big( E\, | \, \lambda B\big)$$ for every Borel set $E\subset X$ and every ball $B(x,R)$ of radius $R>0$. Now, it is well-known that doubling metric spaces with a strong relative isoperimetric inequality support a weak $1$-Poincar\'e inequality; see \cite[Theorem 1.1]{Bobkov-Houdre-1997} or for example \cite{Kinnunen-Korte-Shanmugalingam-Tuominen-2012, Korte-Lahti-2014}. Therefore, $X$ supports a weak $1$-Poincar\'e inequality, which concludes the proof.
\end{proof}

%%%%%%%%%%%%%%%%%%%%%%%%%%%%%%%%%%%%%%%%%%%%%%%%%%%%%%%%%%%%%%%%%%%%%%%%%%%%%%%%%%%%%%%%%%%%%%%%%%%%%%%%%%%%%%%%%%%%%%%%%%%%%%%%%%%%%%%%%%%%%%%%%%%%%%%%%%%%%%%%%%%%%%%%%%%%%%%%%%%%%%%%%%%%%%%%%%%%%%%%%%%%%%%%%%%%%%%%%%%%%%%%%%%%%%%%%%%%%%%%%%%%%%%%%%%%%%%%%%%%%%%%%%%%%%%%%

\section{Lipschitz-volume rigidity for metric manifolds}\label{sec:section8}
In this final section we combine Z\"ust's results  \cite{Zuest-23} with our main existence results (Theorems~\ref{thm:main} and \ref{thm:existence-intcurr-dimension2-intro}) in order to establish Theorem~\ref{thm:Lipschitz-Volume-rigidity}. Before proving Theorem~\ref{thm:Lipschitz-Volume-rigidity} in its generality, we first consider the special case of quasiconvex metric spaces homeomorphic to a $2$-dimensional manifold. The proof for such spaces is more accessible, since we do not need to use Theorem~\ref{thm:existence-intcurr-dimension2-intro}, but can establish the existence of a non-trivial integral current (needed to apply Z\"ust's result) with more elementary methods. The auxiliary result, Lemma~\ref{lem:good-integral-cycle-LipVol} below, which is used for the proof of the special case, will also be used in the proof of the general case. Recall that a metric space $X$ is \textit{quasiconvex} if there exists $L\geq 1$ such that any two points in $X$ can be joined by a curve of length at most $L$ times their distance.

\begin{theorem}\label{thm:Lipschitz-Volume-rigidity-dim2-quasiconvex}
Let $M$ be a closed, orientable, Riemannian surface and let $X$ be a quasiconvex metric space homeomorphic to a closed orientable surface. If $\Haus^2(X)=\Haus^2(M)$, then every surjective $1$-Lipschitz map from $X$ to $M$ is an isometric homeomorphism.
\end{theorem}

We begin with a preparatory lemma for which we need a different notion of mass for integer rectifiable currents. Let $T$ be an integer rectifiable $n$-current in $X$. There exist compact sets $K_i\subset \R^n$, functions $\theta_i\in L^1(\R^n, \Z)$ with $\spt \theta_i\subset K_i$, and bi-Lipschitz maps $\rho_i\colon K_i\to X$ such that 
\begin{equation}\label{eq:rep-int-rect-current}
T=\sum_{i\in \N} \rho_{i\#}\bb{\theta_i} \quad \text{ and } \quad \mass(T)=\sum_{i\in \N} \mass(\rho_{i\#}\bb{\theta_i}).
\end{equation}
We may also assume that the images $\rho_i(K_i)$ are pairwise disjoint.
We call any such collection \((K_i, \theta_i, \rho_i)\) a  \textit{parametrization} of \(T\). 
The Hausdorff or Busemann mass of $T$ is defined by 
$$
\mass^{\rm b}(T)\coloneqq \sum_{i\in\N}\int_{K_i}|\theta_i| \,\mathbf{J}(\md\rho_i)\,d\mathscr{L}^n.
$$
By applying the area formula, it is easy to check that this definition is independent of the particular parametrization.

\begin{lemma}\label{lem:good-integral-cycle-LipVol}
 Let $X$ be a complete metric space, \(S\in\bI_n(X)\) a non-trivial cycle, and $M$ a closed, oriented Riemannian $n$-manifold such that $\Haus^n(M)=\Haus^n(X)$. If $\varphi\colon X\to M$ is a surjective $1$-Lipschitz map, then there exists a cycle $T\in\bI_n(X)$ with \(\spt T=\spt S\) and such that $\varphi_\#T = \bb{M}$ and $\mass^{\rm b}(T)\leq \Haus^n(X)$.
\end{lemma}

\begin{proof}
Let \((K_i, \theta_i, \rho_i)\) be a parametrization of \(S\).
For $(f,\pi)\in\mathcal{D}^n(M)$ we calculate using the area formula that
\begin{equation*}
 \begin{split}
   \varphi_\#S(f,\pi) &= \sum_{i\in\N}\int_{K_i} \theta_i(x) \cdot (f\circ\varphi\circ\rho_i)(x)\cdot \det(D(\pi\circ\varphi\circ\rho_i)(x))\,d\mathscr{L}^n(x)\\
   &= \int_M \theta(y)\, f(y)\,\det(D\pi(y))\,d\hspace{-0.14em}\Haus^n(y),
 \end{split}
\end{equation*}
where we have set 
$$
\theta(y)\coloneqq \sum_{i\in\N} \sum_{x\in K_i\,:\, \varphi(\rho_i(x))=y} \theta_i(x)\,\sgn(\det(D(\varphi\circ\rho_i)(x))).
$$
Since $\varphi_\#S$ is an integral $n$-cycle in $M$ there exists $k\in\Z$ such that $\varphi_\#S = k\cdot\bb{M}$ and thus $|\theta|=|k|$ almost everywhere on $M$.

Finally, since $\Haus^n(X)=\Haus^n(M)$ and $\varphi$ is a surjective $1$-Lipschitz map, it follows that $\varphi$ preserves the Hausdorff measure of all measurable sets and, by the coarea inequality, that for almost every $y\in M$ the preimage $\varphi^{-1}(y)$ consists of exactly one point. This implies that for every $i\in \N$ we have $|\theta_i|=|k|$ almost everywhere on $K_i$. In particular, $k\not=0$ and the current $T:= (1/k)\cdot S$ is still integer rectifiable with $\partial T=0$, hence $T\in\bI_n(X)$, and we have \(\spt T=\spt S\) and $\mass^{\rm b}(T)\leq \Haus^n(X)$, as well as $\varphi_\#T = \bb{M}$  by construction.
\end{proof}

We can now prove Theorem~\ref{thm:Lipschitz-Volume-rigidity-dim2-quasiconvex} without relying on Theorem~\ref{thm:existence-intcurr-dimension2-intro}.

\begin{proof}[Proof of Theorem~\ref{thm:Lipschitz-Volume-rigidity-dim2-quasiconvex}]
Since $X$ is quasiconvex it follows from \cite{Jorgensen-Lang-22} that \(\dim_N(X)=2\). By \cite[Theorem 1.6]{basso2021undistorted} there exists $C\geq 1$ and for every $k\in \N$ a finite metric simplicial complex $\Sigma_k$ and $C$-Lipschitz maps $\psi_k\colon X\to \Sigma_k$ and $\varphi_k\colon\Sigma_k\to E(X)$ such that $\varphi_k(\Sigma_k)$ lies in the $(1/k)$-neighborhood of $X$ in $E(X)$ and $d(x, \varphi_k(\psi_k(x)))\leq 1/k$ for all $x\in X$.

By hypothesis, $X$ is homeomorphic to a closed orientable surface $M'$. We equip $M'$ with an orientation and with a Riemannian metric. Let $\rho\colon M'\to X$ be a positively oriented homeomorphism. One shows exactly as in the proof of \cite[Proposition 5.1]{Meier-Wenger} that there exist $L\geq 0$ and for each $k\in\N$ a Lipschitz map $\rho_k\colon M'\to \Sigma_k$ with $$\vol(\rho_k)\leq L$$ and such that $d(\rho_k, \psi_k\circ\rho)$ is as small as we want. Define a cycle $S_k\in \bI_2(E(X))$ by $S_k= (\varphi_k\circ\rho_k)_\#\bb{M'}$ and notice that $\spt(S_k)\subset \bar{N}_{1/k}(X)$ as well as 
$$
\mass(S_k)\leq C^2 \mass(\rho_{k\,\#}\bb{M'})\leq C'L
$$ 
for some $C'$ only depending on $C$. Since $E(X)$ is compact we may assume, after possibly passing to a subsequence, that $S_k$ converges weakly to some cycle $S\in\bI_2(X)$. 

We claim that for every Lipschitz map $\varphi\colon X\to M'$, we have
$$
\varphi_\#S = \deg(\varphi)\bb{M'}.
$$ 
In order to see this, we first extend $\varphi$ to a Lipschitz map $\bar{\varphi}\colon N_\delta(X)\to M'$ defined on some neighborhood $N_\delta(X)$ in $E(X)$. Notice that if $k$ is large enough and $d(\rho_k, \psi_k\circ\rho)$ is sufficiently small then $\bar{\varphi}\circ\varphi_k\circ\rho_k$ is homotopic to $\varphi\circ\rho$ and hence 
$$
\deg(\bar{\varphi}\circ\varphi_k\circ\rho_k) = \deg(\varphi).
$$ 
It then follows that 
$$
\bar{\varphi}_\#S_k = \deg(\bar{\varphi}\circ\varphi_k\circ\rho_k)\cdot\bb{M'} = \deg(\varphi)\cdot\bb{M'}
$$ 
and since $\bar{\varphi}_\#T_k$ converges weakly to $\varphi_\#T$, we conclude that $\varphi_\#T = \deg(\varphi)\cdot \bb{M}$ as claimed. It now follows from Proposition~\ref{prop:support-is-all-of-X} that $\spt S = X$.

Finally, let $M$ be a closed, orientable, Riemannian surface with $\Haus^2(X) = \Haus^2(M)$, and suppose $\varphi\colon X\to M$ is a surjective $1$-Lipschitz map. We equip $M$ with an orientation. By Lemma~\ref{lem:good-integral-cycle-LipVol} there exists a cycle $T\in\bI_2(X)$ with $\spt T = \spt S = X$ and such that $\varphi_\#T=\bb{M}$ and $\mass^{\rm b}(T)\leq \Haus^2(X)$. It now follows from Z\"ust's rigidity result \cite[Theorem 1.2]{Zuest-23} that $\varphi$ is an isometric homeomorphism.
\end{proof}

We mention here that it is not difficult to prove that the cycle $S$ in the proof above moreover satisfies $\norm{S}\leq C\Haus^2$ for some universal constant and is a generator of the homology group via integral currents. This thus provides a simple approach to Theorem~\ref{thm:existence-intcurr-dimension2-intro} in the quasiconvex case which does not rely on the uniformization result \cite{Ntalampekos-Romney-22}. We finally provide the proof of our general Lipschitz-volume rigidity result stated in the introduction.

\begin{proof}[Proof of Theorem~\ref{thm:Lipschitz-Volume-rigidity}]
 By Theorems~\ref{thm:main} and \ref{thm:existence-intcurr-dimension2-intro} and Proposition~\ref{prop:support-is-all-of-X} there exists an integral $n$-cycle in $X$ whose support is all of $X$. Thus, Lemma~\ref{lem:good-integral-cycle-LipVol} implies that there exists a cycle $T\in\bI_n(X)$ with $\spt T=X$ and such that $\varphi_\#T=\bb{M}$ and $\mass^{\rm b}(T)\leq \Haus^n(X)$. It now follows from Z\"ust's \cite[Theorem 1.2]{Zuest-23} that $\varphi$ is an isometric homeomorphism.
\end{proof}

%%%%%%%%%%%%%%%%%%%%%%%%%%%%%%%%%%%%%%%%%%%%%%%%%%%%%%%%%%%%%%%%%%%%%%%%%%%%%%%%%%%%%%%%%%%%%%%%%%%%%%%%%%%%%%%%%%%%%%%%%%%%%%%%%%%%%%%%%%%%%%%%%%%%%%%%%%%%%%%%%%%%%%%%%%%%%%%%%%%%%%%%%%%%%%%%%%%%%%%%%%%%%%%%%%%%%%%%%%%%%%%%%%%%%%%%%%%%%%%%%%%%%%%%%%%%%%%%%%%%%%%%%%%%%%%%%

%%%%Spacing of Bibliography%%%%%%
\let\oldbibliography\thebibliography
\renewcommand{\thebibliography}[1]{\oldbibliography{#1}
\setlength{\itemsep}{3pt}}

\bibliographystyle{plain}

\end{document}